\documentclass[oneside,reqno,english]{amsart}
\usepackage[T1]{fontenc}
\usepackage[utf8]{inputenc}
\usepackage{xcolor}
\usepackage{babel}
\usepackage{prettyref}
\usepackage{amstext}
\usepackage{amsthm}
\usepackage{amssymb}
\usepackage[pdfusetitle,
 bookmarks=true,bookmarksnumbered=false,bookmarksopen=false,
 breaklinks=false,pdfborder={0 0 0},pdfborderstyle={},backref=false,colorlinks=false]
 {hyperref}
\hypersetup{
 colorlinks=true,citecolor=blue,linkcolor=blue,linktocpage=true}

\makeatletter
\numberwithin{equation}{section}
\numberwithin{figure}{section}

\usepackage{prettyref}

\newrefformat{cor}{Corollary~\ref{#1}}
\newrefformat{subsec}{Section~\ref{#1}}
\newrefformat{lem}{Lemma~\ref{#1}}
\newrefformat{thm}{Theorem~\ref{#1}}
\newrefformat{sec}{Section~\ref{#1}}
\newrefformat{chap}{Chapter~\ref{#1}}
\newrefformat{prop}{Proposition~\ref{#1}}
\newrefformat{exa}{Example~\ref{#1}}
\newrefformat{tab}{Table~\ref{#1}}
\newrefformat{rem}{Remark~\ref{#1}}
\newrefformat{def}{Definition~\ref{#1}}
\newrefformat{fig}{Figure~\ref{#1}}
\newrefformat{claim}{Claim~\ref{#1}}
\newrefformat{assu}{Assumption~\ref{#1}}

\makeatother

\theoremstyle{plain}
\newtheorem{thm}{\protect\theoremname}[section]
\newtheorem{prop}[thm]{\protect\propositionname}
\newtheorem{cor}[thm]{\protect\corollaryname}
\theoremstyle{remark}
\newtheorem{rem}[thm]{\protect\remarkname}
\theoremstyle{plain}
\newtheorem{lem}[thm]{\protect\lemmaname}
\providecommand{\corollaryname}{Corollary}
\providecommand{\lemmaname}{Lemma}
\providecommand{\propositionname}{Proposition}
\providecommand{\remarkname}{Remark}
\providecommand{\theoremname}{Theorem}

\begin{document}
\subjclass[2020]{Primary: 42C40; Secondary: 28A80, 42C15, 46L05, 47A10}
\title[Bernoulli cylinder frame operators]{Bernoulli cylinder frame operators: filtration, Haar structure, and
self-similarity}
\begin{abstract}
We study the finite-rank frame operators generated by cylinder indicator
functions for the Bernoulli Cantor measure $\mu_{p}$. In the symmetric
case $p=\frac{1}{2}$, the natural Haar differences diagonalize these
operators. For general $0<p<1$, we show that the weighted Haar basis
still yields a sparse tree-banded matrix form, although diagonalization
is lost. We also prove a filtration representation in terms of conditional
expectations and level-wise mass operators. This leads to a norm convergent
limit operator $K_{\infty}$, which is compact, positive, and self-adjoint.
Finally, we show that $K_{\infty}$ is characterized by a self-similar
operator identity induced by the first-level Cantor decomposition,
and we derive corresponding block and scalar resolvent renormalization
formulas.
\end{abstract}

\author{James Tian}
\address{Mathematical Reviews, 535 W. William St, Suite 210, Ann Arbor, MI
48103, USA}
\email{james.ftian@gmail.com}
\keywords{Bernoulli Cantor measure, cylinder sets, frame operators, Haar structure,
filtrations, self-similarity, resolvent renormalization}

\maketitle
\tableofcontents{}

\section{Introduction}\label{sec:1}

Let $\mu_{p}$ be the Bernoulli Cantor measure on the middle-third
Cantor set $C$, with weights $p$ and $1-p$ on the two first-level
branches. The associated cylinder sets form a nested binary tree of
measurable subsets of $C$, and hence a natural family of indicator
functions in $L^{2}\left(\mu_{p}\right)$. This family carries two
structures at once: a geometric one, coming from the tree of cylinders,
and a metric one, coming from the cylinder masses determined by $\mu_{p}$. 

Given a finite depth $m$, one may consider the cylinder indicators
\[
\left\{ 1_{C_{u}}:|u|\le m\right\} \subset L^{2}\left(\mu_{p}\right)
\]
and the associated finite-rank positive operator 
\[
K_{m}f=\sum_{|u|\le m}\left\langle 1_{C_{u}},f\right\rangle 1_{C_{u}}.
\]
Equivalently, $K_{m}$ is the frame operator of the finite cylinder
family, or the Gram operator attached to the intersection kernel 
\[
K\left(A,B\right)=\mu_{p}\left(A\cap B\right)
\]
on cylinders. Thus the problem may be read either as a question about
finite systems of indicator functions in $L^{2}\left(\mu_{p}\right)$
or as a question about the operator theory of a concrete positive
kernel adapted to the Bernoulli Cantor tree.

At the finite level, the first phenomenon is a weighted version of
Haar structure. In the symmetric case $p=\frac{1}{2}$, the equal
split at each vertex produces the usual Haar differences on the tree,
and $K_{m}$ is diagonal in the resulting orthogonal basis. For general
$0<p<1$, one still has an orthogonal family of weighted sibling differences,
but the operator is no longer diagonal. Instead, the matrix of $K_{m}\big|_{\mathcal{F}_{m}}$
in the weighted Haar basis vanishes unless the two indices are comparable
in the tree. The finite operator therefore retains a strong sparsity
pattern, but it is a tree sparsity rather than a diagonal one. This
separates the symmetric and nonsymmetric Bernoulli cases in a useful
way.

A second finite level description comes from the natural filtration
by cylinders. Let $E_{n}$ be conditional expectation onto the level-$n$
cylinder $\sigma$-algebra, and let $D_{n}$ be multiplication by
the level-$n$ cylinder mass function. Then 
\[
K_{m}=\sum^{m}_{n=0}D_{n}E_{n}.
\]
This identity places the operators $K_{m}$ in a form closer to martingale
and filtration methods. It also gives direct norm control, and from
it one obtains convergence in operator norm of the series 
\[
K_{\infty}=\sum^{\infty}_{n=0}D_{n}E_{n}.
\]
Thus the finite cylinder frame operators converge to a compact positive
self-adjoint operator on $L^{2}\left(\mu_{p}\right)$.

The limiting operator is not only an object obtained by summing the
finite level contributions. It also admits a self-similar description
coming from the first-level splitting of the Cantor set. The two branches
define isometries $U_{0},U_{2}:L^{2}\left(\mu_{p}\right)\to L^{2}\left(\mu_{p}\right)$,
and if $\phi=1_{C}$, then the limit operator satisfies 
\[
K_{\infty}=P_{\phi}+p\,U_{0}K_{\infty}U^{*}_{0}+\left(1-p\right)U_{2}K_{\infty}U^{*}_{2}.
\]
Moreover, this identity determines $K_{\infty}$ uniquely among bounded
operators. In this form the operator $K_{\infty}$ is characterized
by the same self-similar splitting that defines the measure $\mu_{p}$.
This yields a second description of the limit object, one which is
no longer tied to truncation in the tree.

It is worth mentioning the order in which these structures appear.
The paper does not begin with an abstract transfer operator or an
abstract fixed-point problem and then introduce a model realizing
it. The finite frame operators come first. The weighted Haar structure
and the filtration formula arise directly from the cylinder system,
and the self-similar operator equation appears only afterward. In
that sense the fixed-point identity is not imposed externally; it
is extracted from the finite-dimensional construction itself. This
passage from concrete finite level operators to an intrinsic description
of the infinite one is a main thread running through the paper.

The self-similar identity has several consequences. Its linear part
is a normal completely positive contraction on $B\left(L^{2}\left(\mu_{p}\right)\right)$,
which yields a norm convergent Neumann expansion for $K_{\infty}$.
The same identity gives a block decomposition with respect to the
first-level branching, and this in turn leads to a renormalization
formula for the scalar resolvent function 
\[
m\left(z\right)=\left\langle \phi,\left(zI-K_{\infty}\right)^{-1}\phi\right\rangle .
\]
Thus the operator $K_{\infty}$ carries both a filtration description
and a self-similar operator description, and these two descriptions
interact through the branching structure of the Cantor system.

\textbf{Literature context.} The present work sits at a meeting point
of several established lines of analysis and operator theory. At the
level of the finite operators $K_{m}$, the basic idea is a filtration
built from cylinder $\sigma$-algebras together with a Haar-type decomposition
adapted to a binary tree. In the symmetric case this places the problem
close to classical Haar systems, martingale difference methods, and
multiplier constructions associated with filtrations and conditional
expectations \cite{MR833073,MR402915,MR864712,MR618663,MR1228209}.
At the same time, because the underlying measure is a Bernoulli Cantor
measure, the relevant function system is also part of the broader
harmonic analysis of self-similar and fractal measures, including
orthogonal and wavelet-type constructions on Cantor and related spaces
\cite{MR1648441,MR1785282,MR3343663,MR1840042,MR2441420,MR1410793,MR2254554}.

A second theme is that the first-level splitting of the Cantor set
produces branch isometries and hence an operator-theoretic self-similarity.
This places the later parts of the paper near the literature on iterated
function systems, Cuntz families, and wavelet constructions generated
by isometries and self-similar branch maps \cite{MR467330,MR1469149,MR1743534,MR1998572,MR3103223}.
In that language, the limit operator $K_{\infty}$ may be viewed as
a concrete positive operator canonically generated by the cylinder
tree, while the fixed-point identity derived in \prettyref{sec:6}
expresses the same object intrinsically through the first-level self-similarity.
This passage from an explicit finite construction to an intrinsic
recursive operator identity is a central point of the paper.

There is also a broader operator-theoretic context. The affine map
defining $K_{\infty}$ has a completely positive linear part, so the
fixed-point characterization belongs naturally to the general framework
of completely bounded and completely positive maps, their spectra,
and their fixed-point spaces \cite{MR1976867,MR2572704,MR2805505,MR2839059,MR3059439,MR3990189,MR4126800,MR4478259,MR4550310,MR4718688}.
Our setting is much more concrete than that general theory, but it
shows that a simple self-similar measure space can produce a nontrivial
compact positive operator whose structure is simultaneously accessible
from frame theory, filtration methods, and completely positive recursion.

Finally, the block formulas and scalar resolvent recursion obtained
later in the paper are related in spirit to renormalization ideas
that occur elsewhere in analysis on self-similar sets and in spectral
problems for recursively generated operators \cite{MR1840042,MR2254554,MR2441420,MR4670347},
as well as to more classical resolvent and spectral comparison methods
in operator theory \cite{MR1335452,MR2154153,MR4369236}. What is
specific here is that the operator being renormalized is not introduced
abstractly: it arises directly from the frame operators of cylinder
indicators for the Bernoulli Cantor measure. The resulting picture
combines concrete finite-rank geometry, weighted Haar structure, norm-limit
operator theory, and self-similar recursion in a single model.

\textbf{Organization.} \prettyref{sec:2} fixes the Bernoulli Cantor
model, the cylinder notation, the associated filtration, and the finite
cylinder frame operators. \prettyref{sec:3} treats the equal-split
case. \prettyref{sec:4} develops the weighted finite level structure
and proves the filtration formula. \prettyref{sec:5} constructs the
norm-limit operator and gives a lower bound for its top eigenvalue.
\prettyref{sec:6} proves the self-similar characterization and derives
the block form, the completely positive Neumann expansion, and the
resolvent renormalization identity. \prettyref{sec:7} uses this block
decomposition to derive the scalar resolvent renormalization formula
and to reduce the top eigenvalue problem to a scalar equation for
the rooted resolvent function.

\section{Preliminaries}\label{sec:2}

In this section we fix the Bernoulli Cantor model and collect the
basic facts about cylinders, filtrations, and finite frame operators
that will be used throughout the paper.

\subsection{Cylinders and the Bernoulli measure}

Let $S_{0},S_{2}:\left[0,1\right]\rightarrow\left[0,1\right]$ be
the affine contractions
\[
S_{0}\left(x\right)=\frac{x}{3},\qquad S_{2}\left(x\right)=\frac{x+2}{3},
\]
so that $S_{0}$ maps $\left[0,1\right]$ onto the left third and
$S_{2}$ maps $\left[0,1\right]$ onto the right third. The middle-third
Cantor set $C$ is the unique nonempty compact set satisfying 
\[
C=S_{0}\left(C\right)\cup S_{2}\left(C\right).
\]
Equivalently, $C$ consists of those points in $\left[0,1\right]$
whose ternary expansion uses only the digits $0$ and $2$.

Fix $0<p<1$. The Bernoulli Cantor measure $\mu_{p}$ is the unique
Borel probability measure on $\left[0,1\right]$ satisfying 
\begin{equation}
\mu_{p}=p\mu_{p}\circ S^{-1}_{0}+\left(1-p\right)\mu_{p}\circ S^{-1}_{2},\label{eq:2-1a}
\end{equation}
and $\mathrm{supp}\left(\mu_{p}\right)=C$. Thus $\mu_{p}$ assigns
weight $p$ to the left first-level branch and weight $1-p$ to the
right first-level branch, and the same rule is repeated independently
at every later stage.

It is convenient to describe this measure by words. Let 
\[
\Sigma=\left\{ 0,2\right\} ^{\mathbb{N}}
\]
with the product $\sigma$-algebra, and for a finite word 
\[
w=\left(w_{1},\dots,w_{m}\right)\in\left\{ 0,2\right\} ^{m}
\]
write $\left|w\right|=m$. We also write $\varnothing$ for the empty
word, with $\left|\varnothing\right|=0$. 

For a finite word $w$ define 
\[
S_{w}=S_{w_{1}}\circ\cdots\circ S_{w_{m}},
\]
with the convention that $S_{\varnothing}$ is the identity map. By
definition, $S_{w_{n}}$ acts first, while the outermost map $S_{w_{1}}$
determines the first cylinder level. The corresponding triadic cylinder
interval is 
\[
I_{w}=S_{w}\left(\left[0,1\right]\right),
\]
and the associated cylinder subset of the Cantor set is 
\[
C_{w}=C\cap I_{w}=S_{w}\left(C\right).
\]
Thus $C_{\varnothing}=C$, while $C_{0}$ and $C_{2}$ are the two
first-level Cantor pieces, $C_{00},C_{02},C_{20},C_{22}$ are the
second-level pieces, and so on.

For the empty word $\varnothing$ we have 
\[
C_{\varnothing}=C,\qquad1_{C_{\varnothing}}=1_{C}=\phi,\qquad\mu_{p}\left(C_{\varnothing}\right)=1.
\]
When using the weighted sibling differences from \prettyref{sec:4},
we also write 
\[
h^{\left(p\right)}_{\varnothing}=\left(1-p\right)1_{C_{0}}-p\,1_{C_{2}}.
\]

If $N_{0}\left(w\right)$ and $N_{2}\left(w\right)$ denote the number
of $0$'s and $2$'s in $w$, then repeated use of the self-similarity
relation gives 
\[
\mu_{p}\left(C_{w}\right)=p^{N_{0}\left(w\right)}\left(1-p\right)^{N_{2}\left(w\right)}.
\]
In particular, 
\begin{equation}
\mu_{p}\left(C_{w0}\right)=p\mu_{p}\left(C_{w}\right),\qquad\mu_{p}\left(C_{w2}\right)=\left(1-p\right)\mu_{p}\left(C_{w}\right).\label{eq:2-2a}
\end{equation}
When $p=\frac{1}{2}$ this reduces to the uniform rule 
\begin{equation}
\mu_{1/2}\left(C_{w}\right)=2^{-\left|w\right|}.\label{eq:2-1}
\end{equation}

For each fixed level $m$, the family 
\[
\left\{ C_{w}:\left|w\right|=m\right\} 
\]
partitions $C$ up to $\mu_{p}$-null sets. The cylinders are nested
exactly as the word tree is nested: if $u$ and $v$ are finite words,
then 
\[
C_{u}\cap C_{v}=\begin{cases}
C_{v}, & \text{if \ensuremath{v} extends \ensuremath{u},}\\
C_{u}, & \text{if \ensuremath{u} extends \ensuremath{v},}\\
\varnothing, & \text{otherwise.}
\end{cases}
\]
Also, every cylinder splits into its two children: 
\[
C_{w}=C_{w0}\dot{\cup}C_{w2}\qquad\text{modulo \ensuremath{\mu_{p}}-null sets.}
\]

\subsection{Filtration, projections, and frame operators}

We now introduce the natural filtration. For each $n\ge0$, let 
\[
\mathcal{G}_{n}=\sigma\left(\left\{ C_{w}:\left|w\right|=n\right\} \right).
\]
Then $\left(\mathcal{G}_{n}\right)_{n\ge0}$ is an increasing filtration
on $\left(C,\mu_{p}\right)$, and $\mathcal{G}_{n}$ consists of those
measurable sets that are unions of level-$n$ cylinders, modulo null
sets. Let 
\begin{equation}
E_{n}:L^{2}\left(\mu_{p}\right)\to L^{2}\left(\mu_{p}\right)\label{eq:2-2}
\end{equation}
denote conditional expectation onto $\mathcal{G}_{n}$. Equivalently,
$E_{n}$ is the orthogonal projection onto the closed subspace of
functions that are constant on each level-$n$ cylinder. Thus, for
$f\in L^{2}\left(\mu_{p}\right)$ and $\left|w\right|=n$, 
\begin{equation}
\left(E_{n}f\right)\big|_{C_{w}}=\frac{1}{\mu_{p}\left(C_{w}\right)}\int_{C_{w}}fd\mu_{p}.\label{eq:2-3}
\end{equation}

We shall also use the level-$n$ mass operator 
\begin{equation}
D_{n}:L^{2}\left(\mu_{p}\right)\to L^{2}\left(\mu_{p}\right),\qquad D_{n}f=\sum_{\left|w\right|=n}\mu_{p}\left(C_{w}\right)1_{C_{w}}f.\label{eq:2-4}
\end{equation}
In other words, $D_{n}$ is multiplication by the function that takes
the constant value $\mu_{p}\left(C_{w}\right)$ on each level-$n$
cylinder $C_{w}$.

For $m\ge0$, set 
\begin{equation}
\mathcal{F}_{m}=span\left\{ 1_{C_{u}}:\left|u\right|\le m\right\} \subset L^{2}\left(\mu_{p}\right).\label{eq:2-5}
\end{equation}
This is the finite-dimensional subspace generated by the cylinder
indicators up to depth $m$.

It is useful to make the associated synthesis and analysis operators
explicit. Let 
\[
\Phi_{m}:\ell^{2}(\left\{ 0,2\right\} ^{\le m})\to L^{2}\left(\mu_{p}\right)
\]
be given by 
\[
\Phi_{m}c=\sum_{\left|u\right|\le m}c_{u}1_{C_{u}}.
\]
Thus $\Phi_{m}$ is the synthesis operator of the finite cylinder
family. Its adjoint 
\[
\Phi^{*}_{m}:L^{2}\left(\mu_{p}\right)\to\ell^{2}(\left\{ 0,2\right\} ^{\le m})
\]
is the corresponding analysis operator, given by 
\[
\left(\Phi^{*}_{m}f\right)_{u}=\left\langle 1_{C_{u}},f\right\rangle ,\qquad\left|u\right|\le m.
\]

We then define the finite cylinder frame operator by 
\begin{equation}
K_{m}=\Phi_{m}\Phi^{*}_{m},\label{eq:2-6}
\end{equation}
that is, 
\begin{equation}
K_{m}f=\sum_{\left|u\right|\le m}\left\langle 1_{C_{u}},f\right\rangle 1_{C_{u}},\qquad f\in L^{2}\left(\mu_{p}\right).\label{eq:2-7}
\end{equation}

The operator $K_{m}$ is therefore the frame operator of the finite
cylinder family, while $\Phi_{m}$ is its synthesis operator. Since
each $1_{C_{u}}$ belongs to $L^{2}\left(\mu_{p}\right)$, the operator
$K_{m}$ is finite-rank, positive, and self-adjoint.

A second viewpoint, which we use mainly for intuition, is the kernel
$K\left(A,B\right)=\mu_{p}\left(A\cap B\right)$ on the family of
cylinders. The associated feature map is simply $A\mapsto1_{A}\in L^{2}\left(\mu_{p}\right)$,
so that 
\[
K\left(A,B\right)=\left\langle 1_{A},1_{B}\right\rangle _{L^{2}\left(\mu_{p}\right)}.
\]
Thus the geometric intersection structure of cylinders is encoded
directly in the Hilbert space geometry of their indicator functions.
In the body of the paper, however, we will work primarily in the concrete
space $L^{2}\left(\mu_{p}\right)$ rather than in an abstract reproducing
kernel Hilbert space.

The equal-split case $p=\frac{1}{2}$ will be treated first as a model
case (\prettyref{sec:3}). There the cylinder masses depend only on
depth, and the corresponding Haar differences diagonalize the finite
operators $K_{m}$. For general $0<p<1$, the same tree of cylinders
remains in place, but the masses become branch-dependent, and the
resulting weighted structure is no longer diagonal (\prettyref{sec:4}).
The later sections analyze precisely how this loss of symmetry is
reflected in the finite level matrix structure, in the filtration
formula, and in the self-similar description of the limit operator
(Sections \ref{sec:5}--\ref{sec:7}).

\section{Symmetric case and Haar structure}\label{sec:3}

We treat the symmetric Bernoulli case $p=\frac{1}{2}$ and write $\mu:=\mu_{1/2}$.
In this case the cylinder masses depend only on depth, the tree is
uniform, and the sibling differences $h_{w}:=1_{C_{w0}}-1_{C_{w2}}$
form an orthogonal Haar system. The finite cylinder frame operators
$K_{m}$ are diagonal in this basis.

The same symmetry yields a filtration formula for $K_{m}$, which
we rewrite in terms of martingale differences. This provides the spectral
description of $K_{m}$ and of the limit operator $K_{\infty}$. In
\prettyref{sec:4}, the filtration formula remains valid in the weighted
case, but the scalar action on martingale layers no longer holds.

The Haar system and martingale difference decomposition used below
are classical; see, for example, \cite{MR833073,MR402915,MR864712,MR1228209}.
Operators built from conditional expectations along a filtration,
and their relation to multiplier theory, appear in \cite{MR864712,MR618663}.
We apply these tools to the cylinder frame operators introduced in
\prettyref{sec:2}.

We begin with the finite level diagonalization.
\begin{prop}
\label{prop:3-1}Let $\mathcal{F}_{m}$ be as in \eqref{eq:2-5}.
Let 
\[
\phi:=1_{C},\qquad h_{w}:=1_{C_{w0}}-1_{C_{w2}}\quad\left(|w|\le m-1\right).
\]

Then $\left\{ \phi\right\} \cup\left\{ h_{w}:|w|\le m-1\right\} $
is an orthogonal basis of $\mathcal{F}_{m}$, and the operator $K_{m}$
from \eqref{eq:2-6}--\eqref{eq:2-7} is diagonal in this basis.
More precisely, 
\begin{equation}
K_{m}\phi=\left(2-2^{-m}\right)\phi,\label{eq:3-1}
\end{equation}
and if $|w|=\ell$ with $0\le\ell\le m-1$, then 
\begin{equation}
K_{m}h_{w}=2^{-\ell}\left(1-2^{-\left(m-\ell\right)}\right)h_{w}.\label{eq:3-2}
\end{equation}

Equivalently, if $E_{n}$ denotes the orthogonal projection in \eqref{eq:2-2}--\eqref{eq:2-3},
then 
\begin{equation}
K_{m}=\sum^{m}_{n=0}2^{-n}E_{n}\label{eq:3-3}
\end{equation}
as an operator on $L^{2}\left(\mu\right)$. 
\end{prop}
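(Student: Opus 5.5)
The cleanest route is to prove the operator identity \eqref{eq:3-3} first and then read off the eigenvalues from it. Fix $n\le m$ and group the terms of \eqref{eq:2-7} by length $|u|=n$. For $f\in L^{2}(\mu)$ and $|u|=n$, \eqref{eq:2-3} gives $(E_{n}f)|_{C_{u}}=2^{n}\langle 1_{C_{u}},f\rangle$, using $\mu(C_{u})=2^{-n}$ from \eqref{eq:2-1}. Since the level-$n$ cylinders partition $C$ up to null sets,
\[
\sum_{|u|=n}\langle 1_{C_{u}},f\rangle 1_{C_{u}}=2^{-n}\sum_{|u|=n}(E_{n}f)1_{C_{u}}=2^{-n}E_{n}f.
\]
Summing over $n=0,\dots,m$ yields \eqref{eq:3-3}. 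This is the general identity $K_{m}=\sum D_{n}E_{n}$ specialized to the case where $D_{n}$ is the scalar $2^{-n}$.

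Next I will establish the basis claim. Orthogonality follows from the nesting rule for cylinders. First, $\langle\phi,h_{w}\rangle=\mu(C_{w0})-\mu(C_{w2})=0$. Second, for $w\ne w'$:
\begin{itemize}
\item if $w,w'$ are incomparable, the supports of $h_{w}$ and $h_{w'}$ are disjoint;
\item if $w'$ strictly extends $w$, then $C_{w'}$ lies inside one child of $w$, so $h_{w}$ is constant on $\mathrm{supp}\,h_{w'}$, while $\int h_{w'}\,d\mu=0$.
\end{itemize}
All $h_{w}$ are nonzero, so the family is linearly independent. Its cardinality is $1+\sum_{\ell=0}^{m-1}2^{\ell}=2^{m}$. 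Every $h_{w}$ and $\phi$ lies in $\mathcal{F}_{m}$, and $\mathcal{F}_{m}$ equals the span of the $2^{m}$ level-$m$ indicators, since each shorter indicator is a sum of level-$m$ ones. Hence $\dim\mathcal{F}_{m}=2^{m}$ and the family is an orthogonal basis.

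For the eigenvalues I apply \eqref{eq:3-3}. The function $\phi$ is constant, so $E_{n}\phi=\phi$ for all $n$, and $K_{m}\phi=\sum_{n=0}^{m}2^{-n}\phi=(2-2^{-m})\phi$. Now take $|w|=\ell$. The function $h_{w}$ is $\mathcal{G}_{n}$-measurable for $n\ge\ell+1$, so $E_{n}h_{w}=h_{w}$ there. For $n\le\ell$, each level-$n$ cylinder either contains $C_{w}$, where $h_{w}$ integrates to zero, or is disjoint from it, so $E_{n}h_{w}=0$. Therefore
\[
K_{m}h_{w}=\sum_{n=\ell+1}^{m}2^{-n}h_{w}=2^{-\ell}\bigl(1-2^{-(m-\ell)}\bigr)h_{w},
\]
which is \eqref{eq:3-2}.

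There is no serious obstacle here. The only points needing care are the dimension count, to confirm that the family spans $\mathcal{F}_{m}$, and the null-set conventions in the partition argument.
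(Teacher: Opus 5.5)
Your proof is correct, but it runs in the opposite direction from the paper's.

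The paper computes $K_m\phi$ and $K_mh_w$ directly from \eqref{eq:2-7}. It evaluates $\langle h_w,1_{C_u}\rangle$ case by case ($u$ a prefix of $w$, $u$ extending $w0$, $u$ extending $w2$, or disjoint from $C_w$). It then counts, pointwise, the single cylinder of each depth that contains a given $x$. Only at the end does it obtain \eqref{eq:3-3}, by checking that $\sum_{n=0}^{m}2^{-n}E_n$ acts the same way on the basis and that both operators vanish on $\mathcal{F}_m^{\perp}$.

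You instead prove \eqref{eq:3-3} first, on all of $L^2(\mu)$, from the conditional-expectation formula \eqref{eq:2-3}. This is exactly the argument of \prettyref{prop:4-3} with $D_n=2^{-n}I$. The eigenvalue relations then reduce to $E_n\phi=\phi$ and to $E_nh_w=h_w$ for $n\ge\ell+1$, $E_nh_w=0$ for $n\le\ell$. The paper uses these facts only in its final verification step, and your route needs no separate argument on $\mathcal{F}_m^{\perp}$.

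Spanning is also handled differently:
\begin{itemize}
\item The paper inverts $1_{C_{w0}}=\frac{1}{2}(1_{C_w}+h_w)$ and $1_{C_{w2}}=\frac{1}{2}(1_{C_w}-h_w)$ recursively.
\item You use a dimension count. To make it airtight, pair the upper bound $\dim\mathcal{F}_m\le 2^m$ (from the $2^m$ level-$m$ indicators) with your $2^m$ nonzero orthogonal vectors in $\mathcal{F}_m$. Alternatively, note that the level-$m$ indicators are independent because they have disjoint supports of positive mass.
\end{itemize}

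What each route buys:
\begin{itemize}
\item Yours is shorter. It also shows that the symmetric case is just the weighted filtration formula with scalar mass multipliers. Non-scalar $D_n$ is precisely what destroys diagonalization when $p\neq\frac{1}{2}$, since $E_n$ still acts on $h^{(p)}_w$ by $0$ or $1$.
\item The paper's is more hands-on. It exhibits each eigenvalue as a geometric sum over the nested cylinders through a point, and it supplies explicit change-of-basis formulas between indicators and Haar functions.
\end{itemize}
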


\begin{proof}
The equal split property implies inductively that $\mu\left(C_{w}\right)=2^{-|w|}$,
$|w|\le m$. For each $w$ with $|w|\le m-1$, the children $C_{w0}$
and $C_{w2}$ are disjoint, hence 
\[
\left\langle h_{w},\phi\right\rangle =\mu\left(C_{w0}\right)-\mu\left(C_{w2}\right)=0,
\]
and also 
\[
\left\Vert h_{w}\right\Vert ^{2}=\mu\left(C_{w0}\right)+\mu\left(C_{w2}\right)=\mu\left(C_{w}\right)=2^{-|w|}.
\]
If $u\neq v$ are distinct words with $|u|,|v|\le m-1$, then $h_{u}$
and $h_{v}$ are supported on disjoint sets unless one word extends
the other. In the extension case, say $v$ extends $u$, the function
$h_{u}$ is constant on each atom at level $|u|+1$, while $h_{v}$
has mean zero on each such atom. In either case one checks 
\[
\left\langle h_{u},h_{v}\right\rangle =0.
\]
Thus $\phi$ is orthogonal to each $h_{w}$, and the family $\left\{ h_{w}:|w|\le m-1\right\} $
is pairwise orthogonal.

Next we show spanning. The identities 
\[
1_{C_{w}}=1_{C_{w0}}+1_{C_{w2}},\qquad h_{w}=1_{C_{w0}}-1_{C_{w2}}
\]
imply 
\[
1_{C_{w0}}=\frac{1}{2}\left(1_{C_{w}}+h_{w}\right),\qquad1_{C_{w2}}=\frac{1}{2}\left(1_{C_{w}}-h_{w}\right).
\]
Starting from $\phi=1_{C}$ at level $0$ and iterating these relations,
every indicator $1_{C_{u}}$ with $|u|\le m$ lies in the span of
$\phi$ and the Haar differences $\left\{ h_{w}:|w|\le m-1\right\} $.
Hence these vectors span $\mathcal{F}_{m}$. Since they are orthogonal
and their cardinality is 
\[
1+\sum^{m-1}_{\ell=0}2^{\ell}=2^{m},
\]
which equals $\dim\mathcal{F}_{m}$, they form an orthogonal basis
of $\mathcal{F}_{m}$.

We now compute the action of $K_{m}$ on this basis.

For $\phi=1_{C}$, we have $\left\langle \phi,1_{C_{u}}\right\rangle =\mu\left(C_{u}\right)=2^{-|u|}$,
hence 
\[
K_{m}\phi=\sum_{|u|\le m}2^{-|u|}1_{C_{u}}.
\]
Fix $x\in C$. For each $r=0,1,\dots,m$, exactly one atom $C_{u}$
with $|u|=r$ contains $x$ (modulo null sets). Therefore 
\[
\left(K_{m}\phi\right)\left(x\right)=\sum^{m}_{r=0}2^{-r},
\]
and thus $K_{m}\phi=\left(\sum^{m}_{r=0}2^{-r}\right)\phi=\left(2-2^{-m}\right)\phi$,
which is \prettyref{eq:3-1}. 

Now fix $w$ with $|w|=\ell\le m-1$. For any word $u$ with $|u|\le m$,
\[
\left\langle h_{w},1_{C_{u}}\right\rangle =\mu\left(C_{u}\cap C_{w0}\right)-\mu\left(C_{u}\cap C_{w2}\right).
\]
If $u$ is a prefix of $w$, then $C_{w0},C_{w2}\subset C_{u}$, hence
\[
\left\langle h_{w},1_{C_{u}}\right\rangle =\mu\left(C_{w0}\right)-\mu\left(C_{w2}\right)=0
\]
by equal split. If $u$ extends $w0$, then $C_{u}\subset C_{w0}$
and $C_{u}\cap C_{w2}=\varnothing$, hence 
\[
\left\langle h_{w},1_{C_{u}}\right\rangle =\mu\left(C_{u}\right)=2^{-|u|}.
\]
If $u$ extends $w2$, similarly $\left\langle h_{w},1_{C_{u}}\right\rangle =-2^{-|u|}$.
In all other cases, $C_{u}$ is disjoint from both children and the
inner product is $0$. Therefore 
\[
K_{m}h_{w}=\sum_{\substack{|u|\le m\\
u\succeq w0
}
}2^{-|u|}1_{C_{u}}-\sum_{\substack{|u|\le m\\
u\succeq w2
}
}2^{-|u|}1_{C_{u}}.
\]
Evaluating at $x\in C$, if $x\in C_{w0}$ then for each depth $r=\ell+1,\dots,m$
there is exactly one word $u$ of length $r$ extending $w0$ with
$x\in C_{u}$, and no word extending $w2$ contains $x$. Hence 
\[
\left(K_{m}h_{w}\right)\left(x\right)=\sum^{m}_{r=\ell+1}2^{-r}.
\]
If $x\in C_{w2}$, the same argument gives $\left(K_{m}h_{w}\right)\left(x\right)=-\sum^{m}_{r=\ell+1}2^{-r}$,
and if $x\notin C_{w}$ then both sums vanish and $\left(K_{m}h_{w}\right)\left(x\right)=0$.
In all cases, 
\[
K_{m}h_{w}=\left(\sum^{m}_{r=\ell+1}2^{-r}\right)h_{w}=2^{-\ell}\left(1-2^{-\left(m-\ell\right)}\right)h_{w}
\]
This is \prettyref{eq:3-2}. 

Finally, define $E_{n}$ as the orthogonal projection onto the subspace
of functions constant on each atom $\left\{ C_{u}:|u|=n\right\} $.
Then $E_{n}\phi=\phi$ for all $n$, and for $|w|=\ell$ one has 
\[
E_{n}h_{w}=\begin{cases}
0, & n\le\ell,\\
h_{w}, & n\ge\ell+1,
\end{cases}
\]
since $h_{w}$ is constant on each level-$n$ atom when $n\ge\ell+1$
and has mean zero on each level-$\ell$ atom. Thus 
\[
\left(\sum^{m}_{n=0}2^{-n}E_{n}\right)\phi=\left(\sum^{m}_{n=0}2^{-n}\right)\phi,\qquad\left(\sum^{m}_{n=0}2^{-n}E_{n}\right)h_{w}=\left(\sum^{m}_{r=\ell+1}2^{-r}\right)h_{w},
\]
which matches the already computed action of $K_{m}$ on the orthogonal
basis of $\mathcal{F}_{m}$. Both operators vanish on $\mathcal{F}^{\perp}_{m}$,
hence \prettyref{eq:3-3} holds. 
\end{proof}

The identity $K_{m}=\sum^{m}_{n=0}2^{-n}E_{n}$ places the operator
on the cylinder filtration. We now express this formula in terms of
martingale difference projections.
\begin{prop}
\label{prop:3-4} Let $E_{n}:L^{2}\left(\mu\right)\to L^{2}\left(\mu\right)$
be the conditional expectation as in \eqref{eq:2-2}--\eqref{eq:2-3},
and set 
\begin{equation}
d_{0}=E_{0},\qquad d_{n}=E_{n}-E_{n-1}\qquad\left(n\ge1\right).\label{eq:3-4}
\end{equation}
Then, for every $m\ge0$, 
\begin{equation}
K_{m}=\left(2-2^{-m}\right)d_{0}+\sum^{m}_{n=1}2^{-\left(n-1\right)}\left(1-2^{-\left(m-n+1\right)}\right)d_{n}.\label{eq:3-5}
\end{equation}
In particular, each martingale difference subspace 
\[
\mathcal{D}_{0}=E_{0}L^{2}\left(\mu\right)=\mathrm{span}\left\{ \phi\right\} ,\qquad\mathcal{D}_{n}=d_{n}L^{2}\left(\mu\right)\qquad\left(n\ge1\right)
\]
is $K_{m}$-invariant, and $K_{m}$ acts on $\mathcal{D}_{n}$ as
scalar multiplication by 
\[
\lambda_{n,m}=\begin{cases}
2-2^{-m}, & n=0,\\[0.8ex]
2^{-\left(n-1\right)}\left(1-2^{-\left(m-n+1\right)}\right), & 1\le n\le m,
\end{cases}
\]
while $K_{m}$ vanishes on $\mathcal{D}_{n}$ for $n\ge m+1$. 
\end{prop}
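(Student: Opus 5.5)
The plan is to start from the filtration identity \eqref{eq:3-3} of \prettyref{prop:3-1}, namely $K_{m}=\sum_{n=0}^{m}2^{-n}E_{n}$, and rewrite each $E_{n}$ through the martingale differences. From the definition \eqref{eq:3-4} we have the telescoping identity $E_{n}=\sum_{k=0}^{n}d_{k}$. Substituting this and exchanging the finite sums gives
\[
K_{m}=\sum_{n=0}^{m}2^{-n}\sum_{k=0}^{n}d_{k}=\sum_{k=0}^{m}\Bigl(\sum_{n=k}^{m}2^{-n}\Bigr)d_{k}.
\]
It then remains to evaluate the geometric sums. For $k=0$ the sum is $\sum_{n=0}^{m}2^{-n}=2-2^{-m}$. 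For $1\le k\le m$ it is
\[
\sum_{n=k}^{m}2^{-n}=2^{-(k-1)}\bigl(1-2^{-(m-k+1)}\bigr).
\]
This yields \eqref{eq:3-5}.

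For the invariance and scalar action, the key point is that the $d_{n}$ are mutually orthogonal projections. Because the filtration $(\mathcal{G}_{n})$ is increasing, we have $E_{n}E_{k}=E_{\min(n,k)}$. Hence each $d_{n}$ is a self-adjoint idempotent, and $d_{n}d_{k}=0$ for $n\neq k$: expanding the products, the four terms cancel pairwise.

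Consequently, for $f\in\mathcal{D}_{n}$ we have $d_{k}f=\delta_{kn}f$. Applying \eqref{eq:3-5} to $f$ gives $K_{m}f=\lambda_{n,m}f$ when $n\le m$, and $K_{m}f=0$ when $n\ge m+1$, since no $d_{n}$ with $n>m$ appears in the formula. Invariance of each $\mathcal{D}_{n}$ follows immediately. Finally, $\mathcal{D}_{0}=E_{0}L^{2}(\mu)$ is the space of constants, which is $\mathrm{span}\{\phi\}$.

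There is no real obstacle here. The only care needed is the index bookkeeping in the exchange of summation, so that the $k=0$ coefficient comes out as $2-2^{-m}$ and the $k\ge1$ coefficients match the stated $\lambda_{n,m}$.
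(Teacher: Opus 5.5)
Your proposal is correct and follows the paper's proof essentially verbatim: you write $E_{n}=\sum_{k\le n}d_{k}$, exchange the finite sums in \eqref{eq:3-3}, and evaluate the geometric series. Your explicit check that the $d_{n}$ are mutually orthogonal projections, via $E_{n}E_{k}=E_{\min(n,k)}$, supplies the step the paper leaves implicit when it reads off the scalar action of $K_{m}$ on each $\mathcal{D}_{n}$.
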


\begin{proof}
By \prettyref{eq:3-4}, we get $E_{n}=\sum^{n}_{j=0}d_{j}$, so the
identity \prettyref{eq:3-3} can be written as 
\begin{multline*}
K_{m}=\sum^{m}_{n=0}2^{-n}E_{n}=\sum^{m}_{n=0}2^{-n}\sum^{n}_{j=0}d_{j}\\
=\sum^{m}_{j=0}\left(\sum^{m}_{n=j}2^{-n}\right)d_{j}=\left(\sum^{m}_{r=0}2^{-r}\right)d_{0}+\sum^{m}_{n=1}\left(\sum^{m}_{r=n}2^{-r}\right)d_{n}.
\end{multline*}
This gives the martingale difference formula \prettyref{eq:3-5}. 

Therefore $K_{m}$ acts by the scalar $\lambda_{n,m}$ on $\mathcal{D}_{n}$,
and by $0$ on $\mathcal{D}_{n}$ for $n\ge m+1$. 
\end{proof}

In this form, $K_{m}$ acts by a scalar on each martingale difference
subspace. The next corollary gives the decomposition and multiplicities.
\begin{cor}
\label{cor:3-5} For each $m\ge0$, one has
\[
L^{2}\left(\mu\right)=\bigoplus^{\infty}_{n=0}\mathcal{D}_{n},\qquad K_{m}=\bigoplus^{m}_{n=0}\lambda_{n,m}I_{\mathcal{D}_{n}}\oplus0
\]
with $\lambda_{n,m}$ as in \prettyref{prop:3-4}. Moreover, 
\[
\dim\mathcal{D}_{0}=1,\qquad\dim\mathcal{D}_{n}=2^{n-1}\qquad\left(n\ge1\right).
\]
Hence the nonzero spectrum of $K_{m}$ is 
\[
\left\{ 2-2^{-m}\right\} \cup\left\{ 2^{-\left(n-1\right)}\left(1-2^{-\left(m-n+1\right)}\right):1\le n\le m\right\} ,
\]
where the eigenvalue 
\[
2^{-\left(n-1\right)}\left(1-2^{-\left(m-n+1\right)}\right)
\]
has multiplicity $2^{n-1}$. 
\end{cor}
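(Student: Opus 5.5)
The corollary follows by combining the martingale difference formula of \prettyref{prop:3-4} with the standard orthogonal decomposition along the filtration. The plan is to check four things in order: the decomposition of $L^{2}(\mu)$, the block form of $K_m$, the dimensions of the blocks, and then to read off the spectrum.

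\emph{The decomposition.} The $E_n$ are commuting orthogonal projections with $E_{n-1}\le E_n$, since $\mathcal{G}_{n-1}\subset\mathcal{G}_n$. Hence each $d_n=E_n-E_{n-1}$ is an orthogonal projection, and $d_jd_n=0$ for $j\ne n$. So the $\mathcal{D}_n$ are mutually orthogonal, and $\bigoplus_{n\le N}\mathcal{D}_n=E_NL^2(\mu)$. For completeness we must show that $\bigcup_N E_NL^2(\mu)$, the span of all cylinder indicators, is dense. I would argue that the cylinders generate the Borel $\sigma$-algebra on $C$, since their diameters $3^{-n}$ shrink to zero. Then either the martingale convergence theorem ($E_Nf\to f$ in $L^2$) or a monotone class argument gives density. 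This density step is the only genuinely non-formal input, and I expect it to be the main point to justify carefully.

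\emph{The block form and dimensions.} By \eqref{eq:3-5}, on $\mathcal{D}_n$ with $n\le m$ the operator $K_m$ acts as $\lambda_{n,m}$, and it annihilates $\mathcal{D}_n$ for $n>m$. This yields the stated direct sum. For the dimensions, $\dim E_nL^2(\mu)=2^n$, because there are $2^n$ level-$n$ cylinders, each of positive mass. Hence $\dim\mathcal{D}_0=1$ and $\dim\mathcal{D}_n=2^n-2^{n-1}=2^{n-1}$ for $n\ge1$. Equivalently, $\mathcal{D}_n=\mathrm{span}\{h_w:|w|=n-1\}$, which matches \prettyref{prop:3-1}.

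\emph{The spectrum and multiplicities.} The nonzero spectrum is the set of the values $\lambda_{n,m}$ with $0\le n\le m$; note that $\lambda_{n,m}>0$ for $1\le n\le m$. To get the stated multiplicities, I must check that these values are pairwise distinct, so that eigenspaces do not merge. Write $\lambda_{n,m}=\sum_{r=n}^{m}2^{-r}$ for $n\ge1$ and $\lambda_{0,m}=\sum_{r=0}^m2^{-r}$. The values for $n\ge1$ are strictly decreasing in $n$. Moreover, $\lambda_{0,m}=1+\lambda_{1,m}>\lambda_{1,m}$. Hence the multiplicity of $\lambda_{n,m}$ is exactly $\dim\mathcal{D}_n=2^{n-1}$ for $n\ge1$.
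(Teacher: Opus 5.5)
Your proposal is correct and follows the paper's proof: the orthogonal decomposition into martingale difference spaces, the block form read off from \eqref{eq:3-5}, and the count $\dim\mathcal{D}_n=2^n-2^{n-1}$. You also justify two points the paper only calls ``standard'' or ``immediate'': the density of the span of cylinder indicators, which makes the decomposition complete, and the pairwise distinctness of the $\lambda_{n,m}$, which is what makes each multiplicity exactly $2^{n-1}$.
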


\begin{proof}
The orthogonal decomposition by martingale differences is standard
for the filtration $\left(\mathcal{G}_{n}\right)_{n\ge0}$. The diagonal
form of $K_{m}$ follows from \prettyref{prop:3-4}. It remains to
compute $\dim\mathcal{D}_{n}$.

Since $E_{n}L^{2}\left(\mu\right)$ is the space of functions constant
on each level-$n$ cylinder, we have $\dim E_{n}L^{2}\left(\mu\right)=2^{n}$.
Therefore $\dim\mathcal{D}_{0}=1$, and 
\[
\dim\mathcal{D}_{n}=\dim E_{n}L^{2}\left(\mu\right)-\dim E_{n-1}L^{2}\left(\mu\right)=2^{n}-2^{n-1}=2^{n-1}
\]
for $n\ge1$. The spectral multiplicities follow immediately. 
\end{proof}

Since the coefficients are summable, the same description passes to
the norm-limit operator.
\begin{cor}
\label{cor:3-6} The series 
\begin{equation}
K_{\infty}=\sum^{\infty}_{n=0}2^{-n}E_{n}\label{eq:3-6}
\end{equation}
converges in operator norm, and 
\begin{equation}
K_{\infty}=2d_{0}+\sum^{\infty}_{n=1}2^{-\left(n-1\right)}d_{n}.\label{eq:3-7}
\end{equation}

Hence $K_{\infty}$ acts on $\mathcal{D}_{0}$ as multiplication by
$2$, and on $\mathcal{D}_{n}$ as multiplication by $2^{-\left(n-1\right)}$,
$n\ge1$. Therefore the spectrum of $K_{\infty}$ is 
\[
\sigma\left(K_{\infty}\right)=\left\{ 2\right\} \cup\left\{ 2^{-\left(n-1\right)}:n\ge1\right\} \cup\left\{ 0\right\} ,
\]
where $2$ has multiplicity $1$ and $2^{-\left(n-1\right)}$ has
multiplicity $2^{n-1}$ for $n\ge1$. In particular, 
\[
\left\Vert K_{\infty}\right\Vert =2.
\]
\end{cor}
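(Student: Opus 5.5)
Since each $E_n$ is an orthogonal projection, $\|E_n\|\le1$. Hence $\sum_{n}\|2^{-n}E_n\|\le\sum_n 2^{-n}=2<\infty$, and the series \eqref{eq:3-6} converges absolutely in operator norm to a bounded self-adjoint positive operator $K_\infty$. By \eqref{eq:3-3}, its partial sums are exactly $K_m$. So $\|K_\infty-K_m\|\le\sum_{n>m}2^{-n}=2^{-m}\to0$, and $K_\infty$ is the norm limit of the finite cylinder frame operators.

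To obtain \eqref{eq:3-7}, I will pass to the limit in \eqref{eq:3-5} rather than rearrange an infinite double series directly.

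\textbf{Candidate operator.} The subspaces $\mathcal{D}_n$ are mutually orthogonal and $L^2(\mu)=\bigoplus_n\mathcal{D}_n$ by \prettyref{cor:3-5}. Since the coefficients $2^{-(n-1)}$ are bounded, the operator
\[
T=2d_0+\sum_{n\ge1}2^{-(n-1)}d_n
\]
is well defined and bounded. The sum converges in norm: its tail beyond $N$ has norm $\sup_{n>N}2^{-(n-1)}\to0$, because the $d_n$ project onto orthogonal subspaces.

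\textbf{Comparing on each layer.} Take $f\in\mathcal{D}_n$. By \prettyref{prop:3-4}:
\begin{itemize}
\item for $n\le m$, $(T-K_m)f$ is $2^{-m}f$ when $n=0$, and $2^{-(n-1)}2^{-(m-n+1)}f=2^{-m}f$ when $1\le n\le m$;
\item for $n>m$, $(T-K_m)f=2^{-(n-1)}f$, with $2^{-(n-1)}\le2^{-m}$.
\end{itemize}
So $T-K_m$ is diagonal with respect to the orthogonal decomposition, with entries bounded by $2^{-m}$. Hence $\|T-K_m\|\le2^{-m}\to0$, and therefore $T=K_\infty$, which is \eqref{eq:3-7}.

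\textbf{Spectrum.} Formula \eqref{eq:3-7} shows $K_\infty$ acts on $\mathcal{D}_0$ by $2$ and on $\mathcal{D}_n$ by $2^{-(n-1)}$. So $K_\infty$ is diagonalizable with this list of eigenvalues, and the multiplicities $1$ and $2^{n-1}$ come from \prettyref{cor:3-5}.

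For the spectrum of a diagonal operator, take $\lambda$ outside the closure of the eigenvalue set. Then $\sup_n|\lambda-\lambda_n|^{-1}<\infty$, so $(\lambda I-K_\infty)^{-1}$ exists as a bounded diagonal operator. Hence the spectrum is the closure of the eigenvalue set, which adds the accumulation point $0$. The point $0$ is not itself an eigenvalue, since no $\mathcal{D}_n$ is killed, but it lies in the spectrum as a limit. The norm equals the spectral radius of this positive operator, so $\|K_\infty\|=\max\sigma(K_\infty)=2$.

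The only delicate point is the interchange of the infinite sum when rewriting $\sum_n 2^{-n}E_n$ in difference form. The layerwise estimate $\|T-K_m\|\le2^{-m}$ above avoids it. Everything else is routine.
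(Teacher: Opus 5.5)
Your proposal is correct and follows the same route as the paper. Both arguments get absolute norm convergence from $\|E_n\|\le 1$, pass to the limit in \prettyref{prop:3-4} to obtain \eqref{eq:3-7}, and read off the spectrum, the multiplicities, and $\|K_\infty\|=2$ from the orthogonal decomposition $L^{2}(\mu)=\bigoplus_{n}\mathcal{D}_{n}$ of \prettyref{cor:3-5}. Your layerwise bound $\|T-K_m\|\le 2^{-m}$ and your closure-of-eigenvalues argument are explicit versions of steps the paper states in one line, not a different method.
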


\begin{proof}
Since $\left\Vert E_{n}\right\Vert =1$ for all $n$, 
\[
\sum^{\infty}_{n=0}\left\Vert 2^{-n}E_{n}\right\Vert \le\sum^{\infty}_{n=0}2^{-n}<\infty,
\]
so the series \prettyref{eq:3-6} converges in operator norm. Passing
to the limit in \prettyref{prop:3-4} gives \prettyref{eq:3-7}.

The spectral statements follow immediately from the orthogonal decomposition
$L^{2}\left(\mu\right)=\bigoplus^{\infty}_{n=0}\mathcal{D}_{n}$.
The norm is the top eigenvalue, namely $2$. 
\end{proof}

The spectral description of $K_{\infty}$ allows one to determine
its position among the Schatten classes. Recall that for $1\le r<\infty$,
the Schatten class $S_{r}$ consists of all compact operators $T$
on a Hilbert space such that $\sum s_{j}(T)^{r}<\infty$, where $\left(s_{j}(T)\right)$
are the singular values of $T$; see \cite{MR2154153}. In particular,
$S_{2}$ is the Hilbert-Schmidt class and $S_{1}$ is the trace class.
\begin{cor}
\label{cor:3-7} For the symmetric limit operator $K_{\infty}$, one
has 
\[
K_{\infty}\in S_{r}\Longleftrightarrow r>1.
\]
In particular, $K_{\infty}$ is Hilbert-Schmidt but not trace class. 
\end{cor}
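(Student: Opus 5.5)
The plan is to read off the singular values of $K_{\infty}$ from \prettyref{cor:3-6} and compute the Schatten sum directly. Since $K_{\infty}$ is positive and self-adjoint, its singular values are its nonzero eigenvalues, repeated according to multiplicity. By \prettyref{cor:3-6} these eigenvalues are $2$ with multiplicity $1$, and $2^{-(n-1)}$ with multiplicity $2^{n-1}$ for each $n\ge1$. The eigenvalues accumulate only at $0$ and each has finite multiplicity, so $K_{\infty}$ is compact. Alternatively, compactness follows because $K_{\infty}$ is a norm limit of the finite-rank operators $K_{m}$.

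For $1\le r<\infty$ I would compute
\[
\sum_{j}s_{j}(K_{\infty})^{r}=2^{r}+\sum_{n=1}^{\infty}2^{n-1}\bigl(2^{-(n-1)}\bigr)^{r}=2^{r}+\sum_{k=0}^{\infty}2^{k(1-r)}.
\]
This is a geometric series with ratio $2^{1-r}$. It converges if and only if $2^{1-r}<1$, that is, if and only if $r>1$. When $r=1$ every term equals $1$ and the series diverges. This gives $K_{\infty}\in S_{r}\Longleftrightarrow r>1$ for $1\le r<\infty$. Taking $r=2$ shows that $K_{\infty}$ is Hilbert--Schmidt, and taking $r=1$ shows that it is not trace class.

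There is no real obstacle here. The only care needed is to count each eigenvalue $2^{-(n-1)}$ exactly $2^{n-1}$ times, as given by $\dim\mathcal{D}_{n}=2^{n-1}$ in \prettyref{cor:3-5}. One should also note that the eigenvalue $0$ does not contribute, since it lies outside $\bigoplus_{n}\mathcal{D}_{n}$ only as an accumulation point and is not an eigenvalue carrying multiplicity here.

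There is a sanity check for the failure of the trace-class property. On $\mathcal{D}_{n}$ the trace contribution is $2^{n-1}\cdot2^{-(n-1)}=1$ for every $n\ge1$, so the trace is infinite. Equivalently, $\mathrm{tr}(2^{-n}E_{n})=2^{-n}\cdot2^{n}=1$ for every $n$.
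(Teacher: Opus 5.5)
Your proposal is correct and essentially identical to the paper's proof: you take the eigenvalues $2$ and $2^{-(n-1)}$ (the latter with multiplicity $2^{n-1}$) from Corollary~\ref{cor:3-6} and note that $\sum_j s_j(K_\infty)^r = 2^r + \sum_{k\ge 0} 2^{k(1-r)}$ converges exactly when $r>1$. Your extra check via $\mathrm{tr}(2^{-n}E_n)=1$ is a valid independent confirmation that $K_\infty$ is not trace class. Your remark about $0$ is harmless but can be simplified: $K_\infty$ is actually injective here, since it acts by a nonzero scalar on every $\mathcal{D}_n$.
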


\begin{proof}
By \prettyref{cor:3-6}, the nonzero eigenvalues of $K_{\infty}$
are $2$ with multiplicity $1$, and $2^{-\left(n-1\right)}$ with
multiplicity $2^{n-1}$ for $n\ge1$. Therefore 
\[
\sum s_{j}\left(K_{\infty}\right)^{r}=2^{r}+\sum^{\infty}_{n=1}2^{n-1}2^{-r\left(n-1\right)}=2^{r}+\sum^{\infty}_{n=1}2^{\left(1-r\right)\left(n-1\right)}.
\]
This converges if and only if $r>1$. Taking $r=2$ and $r=1$ gives
the final statement. 
\end{proof}

\begin{rem}
The symmetric case $p=\frac{1}{2}$ provides a solvable model for
the cylinder frame operators. Because the cylinder masses depend only
on depth, the operators are determined by the filtration structure
and act by scalars on the martingale difference subspaces. This reduces
the spectral analysis of $K_{m}$ and of the limit operator $K_{\infty}$
to geometric series on the tree, giving explicit eigenvalues, multiplicities,
and the threshold for Schatten class inclusion. 

In \prettyref{sec:4}, the weighted case retains the filtration formula
but loses the scalar action on martingale layers. The symmetric model
thus serves as a reference point for the structural changes that occur
when the branch weights are unequal.
\end{rem}

\section{Weighted case and filtration}\label{sec:4}

We now fix $0<p<1$ and return to the finite operators $K_{m}$ in
the nonsymmetric Bernoulli case. The equal-split diagonalization from
\prettyref{sec:3} fails, but two pieces of structure remain. First,
there is still a natural orthogonal family of sibling differences,
now weighted by the branch masses. Second, the cylinder filtration
from \prettyref{sec:2} still governs the operator through a conditional
expectation representation.

The main point of this section is to make these two descriptions explicit.
\prettyref{thm:4-1} identifies a weighted Haar basis for $\mathcal{F}_{m}$
and computes the resulting matrix coefficients, showing that the difference--difference
block is sparse in a tree sense (it vanishes unless the indices are
comparable). Afterward we state the filtration formula $K_{m}=\sum^{m}_{n=0}D_{n}E_{n}$,
which will be the bridge to the norm limit analysis in \prettyref{sec:5}.
\begin{thm}
\label{thm:4-1}Fix $0<p<1$, let $\mu_{p}$ be the Bernoulli Cantor
measure as in \prettyref{eq:2-1a} so that \prettyref{eq:2-2a} holds.
Fix $m\ge1$. Let $\mathcal{F}_{m}$ be as in \prettyref{eq:2-5},
and let $K_{m}$ be the corresponding frame operator. For each word
$w$ with $|w|\le m-1$, define the weighted sibling difference 
\[
h^{\left(p\right)}_{w}=\left(1-p\right)1_{C_{w0}}-p\,1_{C_{w2}}.
\]
Also write 
\[
\phi=1_{C},\qquad q=p^{2}+\left(1-p\right)^{2}.
\]
Then the following hold.
\begin{enumerate}
\item The family 
\begin{equation}
\left\{ \phi\right\} \cup\left\{ h^{\left(p\right)}_{w}:|w|\le m-1\right\} \label{eq:4-1}
\end{equation}
is an orthogonal basis of $\mathcal{F}_{m}$. Moreover, 
\begin{equation}
\Vert h^{\left(p\right)}_{w}\Vert^{2}=p\left(1-p\right)\mu_{p}\left(C_{w}\right).\label{eq:4-2}
\end{equation}
\item The operator $K_{m}$ leaves $\mathcal{F}_{m}$ invariant and vanishes
on $\mathcal{F}^{\perp}_{m}$.
\item For every word $w$ with $|w|\le m-1$, 
\begin{equation}
\left\langle h^{\left(p\right)}_{w},K_{m}\phi\right\rangle =p\left(1-p\right)\left(2p-1\right)\mu_{p}\left(C_{w}\right)^{2}\sum^{m-|w|-1}_{j=0}q^{j}.\label{eq:4-3}
\end{equation}
\item For every word $w$ with $|w|\le m-1$, 
\begin{equation}
\left\langle h^{\left(p\right)}_{w},K_{m}h^{\left(p\right)}_{w}\right\rangle =2p^{2}\left(1-p\right)^{2}\mu_{p}\left(C_{w}\right)^{2}\sum^{m-|w|-1}_{j=0}q^{j}.\label{eq:4-4}
\end{equation}
\item Let $u,v$ be words with $|u|,|v|\le m-1$, $u\neq v$.

If $u$ and $v$ are incomparable, then 
\[
\left\langle h^{\left(p\right)}_{u},K_{m}h^{\left(p\right)}_{v}\right\rangle =0.
\]

If $u$ is a proper prefix of $v$, then 
\[
\left\langle h^{\left(p\right)}_{u},K_{m}h^{\left(p\right)}_{v}\right\rangle =\begin{cases}
p\left(1-p\right)^{2}\left(2p-1\right)\mu_{p}\left(C_{v}\right)^{2}\sum^{m-|v|-1}_{j=0}q^{j}, & \text{if \ensuremath{v} begins with \ensuremath{u0},}\\[1.2ex]
-p^{2}\left(1-p\right)\left(2p-1\right)\mu_{p}\left(C_{v}\right)^{2}\sum^{m-|v|-1}_{j=0}q^{j}, & \text{if \ensuremath{v} begins with \ensuremath{u2}.}
\end{cases}
\]

\end{enumerate}
Consequently, in the orthonormal basis 
\begin{equation}
e_{\phi}=\phi,\qquad e_{w}=\frac{h^{\left(p\right)}_{w}}{\sqrt{p\left(1-p\right)\mu_{p}\left(C_{w}\right)}},\label{eq:4-5}
\end{equation}
the matrix of $K_{m}|_{\mathcal{F}_{m}}$ is self-adjoint, and its
difference-difference block is tree-banded: the entry 
\[
\left\langle e_{v},K_{m}e_{u}\right\rangle 
\]
vanishes unless $u$ and $v$ are comparable. In particular, $p=\frac{1}{2}$
is the unique parameter for which this matrix is diagonal in the weighted
Haar basis. 
\end{thm}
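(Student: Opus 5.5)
The plan is to reduce everything to one elementary computation: the inner products $\langle h^{(p)}_{w},1_{C_{u}}\rangle$ for all words $u$ with $|u|\le m$. Since $K_{m}f=\sum_{|u|\le m}\langle 1_{C_{u}},f\rangle 1_{C_{u}}$, every matrix entry has the form
\[
\langle g,K_{m}f\rangle=\sum_{|u|\le m}\langle g,1_{C_{u}}\rangle\langle 1_{C_{u}},f\rangle .
\]
Using \eqref{eq:2-2a}, I will show the following four cases.
\begin{itemize}
\item If $u$ is a prefix of $w$ (including $u=w$), then $\langle h^{(p)}_{w},1_{C_{u}}\rangle=(1-p)p\,\mu_{p}(C_{w})-p(1-p)\mu_{p}(C_{w})=0$. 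This is the weighted replacement for the equal-split cancellation in \prettyref{prop:3-1}.
\item If $u$ extends $w0$, the inner product is $(1-p)\mu_{p}(C_{u})$.
\item If $u$ extends $w2$, it is $-p\,\mu_{p}(C_{u})$.
\item Otherwise it is $0$.
\end{itemize}

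For part (1), I will argue as in \prettyref{prop:3-1}. Each $h^{(p)}_{w}$ has integral $(1-p)p\mu_{p}(C_{w})-p(1-p)\mu_{p}(C_{w})=0$, and it has mean zero on $C_{w}$. Hence it is orthogonal to $\phi$ and to every $h^{(p)}_{u}$ with $u$ a proper prefix of $w$, because such $h^{(p)}_{u}$ is constant on $C_{w}$. Differences attached to incomparable words have disjoint supports. The norm is $(1-p)^{2}p\mu_{p}(C_{w})+p^{2}(1-p)\mu_{p}(C_{w})=p(1-p)\mu_{p}(C_{w})$, which is \eqref{eq:4-2}. For spanning, I will invert the two-by-two relations:
\[
1_{C_{w0}}=p\,1_{C_{w}}+h^{(p)}_{w},\qquad 1_{C_{w2}}=(1-p)1_{C_{w}}-h^{(p)}_{w},
\]
and then induct on depth. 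Counting gives $2^{m}$ vectors, and $\dim\mathcal{F}_{m}=2^{m}$ because $\mathcal{F}_{m}$ is the space of functions constant on level-$m$ cylinders. For part (2), the range of $K_{m}$ lies in $\mathcal{F}_{m}$ by definition, and every analysis coefficient $\langle 1_{C_{u}},f\rangle$ vanishes for $f\perp\mathcal{F}_{m}$.

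Parts (3)--(5) then become sums over subtrees. The key auxiliary identity is
\[
\sum_{|x|=|v|+j,\ x\succeq v}\mu_{p}(C_{x})^{2}=q^{j}\mu_{p}(C_{v})^{2},
\]
which follows by induction from \eqref{eq:2-2a}, since $p^{2}+(1-p)^{2}=q$.
\begin{itemize}
\item For (3), take $f=\phi$, so $\langle 1_{C_{u}},\phi\rangle=\mu_{p}(C_{u})$. The sum becomes
\[
(1-p)p^{2}\mu_{p}(C_{w})^{2}\sum_{j}q^{j}-p(1-p)^{2}\mu_{p}(C_{w})^{2}\sum_{j}q^{j}.
\]
The subtrees rooted at $w0$ and $w2$ have depths $j=0,\dots,m-|w|-1$. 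This yields the factor $p(1-p)(2p-1)$ in \eqref{eq:4-3}.
\item For (4), the coefficients appear squared, and the two subtrees contribute $(1-p)^{2}p^{2}+p^{2}(1-p)^{2}$, which gives \eqref{eq:4-4}.
\item For (5), a word $u'$ gives a nonzero term only if it strictly extends a child of both $u$ and $v$. This forces comparability, so incomparable words give $0$. If $u$ is a proper prefix of $v$ and $v$ begins with $u0$, the relevant $u'$ all lie in $C_{u0}$. There $\langle h^{(p)}_{u},1_{C_{u'}}\rangle=(1-p)\mu_{p}(C_{u'})$, and the term $u'=v$ drops out by the prefix case. The remaining sum is $(1-p)$ times the sum in (3) with $w=v$, namely $p(1-p)^{2}(2p-1)\mu_{p}(C_{v})^{2}\sum q^{j}$. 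The $u2$ case is identical with factor $-p$.
\end{itemize}

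Normalizing via \eqref{eq:4-2} gives the orthonormal basis \eqref{eq:4-5}. The matrix is self-adjoint since $K_{m}$ is, and (5) gives the tree-banded statement. For uniqueness of $p=\frac{1}{2}$, I will take $w=\varnothing$ in \eqref{eq:4-3}. Since $m\ge1$, the entry is $p(1-p)(2p-1)\sum q^{j}$, which is nonzero whenever $p\neq\frac{1}{2}$. Conversely, for $p=\frac{1}{2}$ all off-diagonal entries carry the factor $2p-1$ and vanish. The only step I expect to need real care is the bookkeeping in (5): deciding which $u'$ contribute, excluding $u'=v$ and the prefixes of $v$, and applying the squared-mass identity with the correct depth range $m-|v|-1$.
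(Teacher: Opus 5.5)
Your proposal is correct and follows essentially the same route as the paper: the same table of values for $\langle h^{(p)}_{w},1_{C_{u}}\rangle$, the same subtree identity $\sum_{x\succeq v,\,|x|=|v|+j}\mu_{p}(C_{x})^{2}=q^{j}\mu_{p}(C_{v})^{2}$ proved by induction, and the same two-by-two inversion relations for spanning. The only cosmetic difference is in the comparable case of (5): you recognize the sum as $(1-p)$ or $-p$ times the entry in (3) with $w=v$, whereas the paper recomputes it directly from the lemma, and the two computations are the same.
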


\begin{proof}
To simplify notations, we write 
\[
1_{w}:=1_{C_{w}},\qquad\mu(w):=\mu_{p}\left(C_{w}\right).
\]

We first prove $(1)$.

For each $w$ with $|w|\le m-1$, $h^{\left(p\right)}_{w}=\left(1-p\right)1_{w0}-p1_{w2}$.
Using $C_{w0}\cap C_{w2}=\varnothing$ and \prettyref{eq:2-2a}, we
get 
\[
\Vert h^{\left(p\right)}_{w}\Vert^{2}=\left(1-p\right)^{2}\mu\left(w0\right)+p^{2}\mu\left(w2\right)=p\left(1-p\right)\mu\left(w\right),
\]
which is \prettyref{eq:4-2}. Also, 
\[
\langle h^{\left(p\right)}_{w},\phi\rangle=\left(1-p\right)\mu\left(w0\right)-p\mu\left(w2\right)=0.
\]

Now let $u\neq v$ with $|u|,|v|\le m-1$. If $u$ and $v$ are incomparable,
then the supports of $h^{\left(p\right)}_{u}$ and $h^{\left(p\right)}_{v}$
are disjoint, so $\langle h^{\left(p\right)}_{u},h^{\left(p\right)}_{v}\rangle=0$.
If $u$ is a proper prefix of $v$, then $h^{\left(p\right)}_{u}$
is constant on $C_{v}$, while 
\[
\int_{C_{v}}h^{\left(p\right)}_{v}\,d\mu_{p}=\left(1-p\right)\mu\left(v0\right)-p\mu\left(v2\right)=0.
\]
Hence $\langle h^{\left(p\right)}_{u},h^{\left(p\right)}_{v}\rangle=0$.
The case $v\prec u$ is the same. Thus \prettyref{eq:4-1} is an orthogonal
family.

To prove spanning, note that 
\[
1_{w0}=h^{\left(p\right)}_{w}+p\,1_{w},\qquad1_{w2}=\left(1-p\right)1_{w}-h^{\left(p\right)}_{w}.
\]
Starting from $\phi=1_{C}$, these identities show inductively that
every $1_{u}$ with $|u|\le m$ lies in the span of 
\[
\left\{ \phi\right\} \cup\left\{ h^{\left(p\right)}_{w}:|w|\le m-1\right\} .
\]
Since $1+\sum^{m-1}_{\ell=0}2^{\ell}=2^{m}=\dim\mathcal{F}_{m}$,
the family \prettyref{eq:4-1} is an orthogonal basis of $\mathcal{F}_{m}$.

This also gives $(2)$: each $1_{u}$ lies in $\mathcal{F}_{m}$,
hence $K_{m}$ maps $\mathcal{F}_{m}$ into itself, and if $f\in\mathcal{F}^{\perp}_{m}$,
then $\left\langle 1_{u},f\right\rangle =0$ for all $|u|\le m$,
so $K_{m}f=0$.

To prove $(3)$, $(4)$ and $(5)$, we need the following lemma.
\end{proof}

\begin{lem}
For every word $z$ and every $n\ge0$, 
\[
\sum_{\substack{u\succeq z\\
|u|=|z|+n
}
}\mu\left(u\right)^{2}=\mu\left(z\right)^{2}q^{n},\qquad q:=p^{2}+\left(1-p\right)^{2}.
\]
\end{lem}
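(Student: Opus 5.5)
The plan is to argue by induction on $n$, using only the one-step mass relation \eqref{eq:2-2a}. For $n=0$ the only word $u\succeq z$ with $|u|=|z|$ is $u=z$ itself, so the sum is $\mu(z)^{2}=\mu(z)^{2}q^{0}$.

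For the inductive step, note that every word $u\succeq z$ with $|u|=|z|+n+1$ can be written uniquely as $u=u'a$. Here $u'\succeq z$, $|u'|=|z|+n$, and $a\in\{0,2\}$. We therefore regroup the sum by the parent $u'$:
\[
\sum_{\substack{u\succeq z\\ |u|=|z|+n+1}}\mu(u)^{2}=\sum_{\substack{u'\succeq z\\ |u'|=|z|+n}}\left(\mu(u'0)^{2}+\mu(u'2)^{2}\right).
\]
By \eqref{eq:2-2a}, $\mu(u'0)=p\,\mu(u')$ and $\mu(u'2)=(1-p)\mu(u')$. Hence each inner bracket equals $\bigl(p^{2}+(1-p)^{2}\bigr)\mu(u')^{2}=q\,\mu(u')^{2}$. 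The inductive hypothesis then gives the total $q\cdot\mu(z)^{2}q^{n}=\mu(z)^{2}q^{n+1}$.

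A direct alternative would also work. Any such $u$ has the form $zs$ with $|s|=n$, and $\mu(zs)=\mu(z)\,p^{N_{0}(s)}(1-p)^{N_{2}(s)}$. The sum then becomes $\mu(z)^{2}\sum_{|s|=n}\prod_{i}\bigl(p^{2}\text{ or }(1-p)^{2}\bigr)=\mu(z)^{2}\bigl(p^{2}+(1-p)^{2}\bigr)^{n}$ by expanding the product.

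There is no real obstacle. The only point needing care is the bijection between descendants at depth $n+1$ and pairs consisting of a depth-$n$ descendant and a last letter. This bijection is immediate from the word-tree structure recorded in \prettyref{sec:2}.
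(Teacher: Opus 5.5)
Your proposal is correct and follows the same route as the paper: induction on $n$, regrouping the depth-$(n+1)$ descendants of $z$ by their parent and using $\mu(v0)^{2}+\mu(v2)^{2}=q\,\mu(v)^{2}$ from \eqref{eq:2-2a}. Your alternative, expanding $\bigl(p^{2}+(1-p)^{2}\bigr)^{n}$ directly via $\mu(zs)=\mu(z)\,p^{N_{0}(s)}(1-p)^{N_{2}(s)}$, is also valid.
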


\begin{proof}
\noindent For $n=0$ this is immediate. Suppose it holds for $n$.
Then 
\[
\sum_{\substack{u\succeq z\\
|u|=|z|+n+1
}
}\mu\left(u\right)^{2}=\sum_{\substack{v\succeq z\\
|v|=|z|+n
}
}\left(\mu\left(v0\right)^{2}+\mu\left(v2\right)^{2}\right).
\]
Since 
\[
\mu\left(v0\right)^{2}+\mu\left(v2\right)^{2}=\left(p^{2}+\left(1-p\right)^{2}\right)\mu\left(v\right)^{2}=q\mu\left(v\right)^{2},
\]
the induction hypothesis gives 
\[
\sum_{\substack{u\succeq z\\
|u|=|z|+n+1
}
}\mu\left(u\right)^{2}=q\sum_{\substack{v\succeq z\\
|v|=|z|+n
}
}\mu\left(v\right)^{2}=q\mu\left(z\right)^{2}q^{n}=\mu\left(z\right)^{2}q^{n+1}.
\]
This proves the lemma. 
\end{proof}

\begin{proof}[Proof of \prettyref{thm:4-1} continued]
We now use 
\[
\left\langle f,K_{m}g\right\rangle =\sum_{|u|\le m}\left\langle f,1_{u}\right\rangle \left\langle 1_{u},g\right\rangle .
\]
A direct inspection gives, for $|w|\le m-1$, 
\[
\langle1_{u},h^{\left(p\right)}_{w}\rangle=\begin{cases}
\left(1-p\right)\mu\left(u\right), & \text{if \ensuremath{u\succeq w0},}\\[0.8ex]
-p\mu\left(u\right), & \text{if \ensuremath{u\succeq w2},}\\[0.8ex]
0, & \text{otherwise.}
\end{cases}
\]

We prove $(3)$. Since $\left\langle \phi,1_{u}\right\rangle =\mu(u)$,
\[
\langle h^{\left(p\right)}_{w},K_{m}\phi\rangle=\sum_{|u|\le m}\mu\left(u\right)\langle1_{u},h^{\left(p\right)}_{w}\rangle.
\]
Only descendants of $w0$ and $w2$ contribute, so 
\[
\langle h^{\left(p\right)}_{w},K_{m}\phi\rangle=\left(1-p\right)\sum_{\substack{u\succeq w0\\
|u|\le m
}
}\mu\left(u\right)^{2}-p\sum_{\substack{u\succeq w2\\
|u|\le m
}
}\mu\left(u\right)^{2}.
\]
Split by depth: 
\[
\sum_{\substack{u\succeq w0\\
|u|\le m
}
}\mu\left(u\right)^{2}=\sum^{m-|w|-1}_{j=0}\sum_{\substack{u\succeq w0\\
|u|=|w|+1+j
}
}\mu\left(u\right)^{2}=\mu\left(w0\right)^{2}\sum^{m-|w|-1}_{j=0}q^{j}.
\]
Similarly, 
\[
\sum_{\substack{u\succeq w2\\
|u|\le m
}
}\mu\left(u\right)^{2}=\mu\left(w2\right)^{2}\sum^{m-|w|-1}_{j=0}q^{j}.
\]
Since $\mu\left(w0\right)=p\mu\left(w\right)$ and $\mu\left(w2\right)=\left(1-p\right)\mu\left(w\right)$,
\begin{align*}
\langle h^{\left(p\right)}_{w},K_{m}\phi\rangle & =\left(\left(1-p\right)p^{2}-p\left(1-p\right)^{2}\right)\mu\left(w\right)^{2}\sum^{m-|w|-1}_{j=0}q^{j}\\
 & =p\left(1-p\right)\left(2p-1\right)\mu(w)^{2}\sum^{m-|w|-1}_{j=0}q^{j}
\end{align*}
which is \prettyref{eq:4-3}.

We prove $(4)$. Using the same coefficient formula, 
\[
\langle h^{\left(p\right)}_{w},K_{m}h^{\left(p\right)}_{w}\rangle=\sum_{|u|\le m}\left|\langle1_{u},h^{\left(p\right)}_{w}\rangle\right|^{2}.
\]
Thus 
\[
\langle h^{\left(p\right)}_{w},K_{m}h^{\left(p\right)}_{w}\rangle=\left(1-p\right)^{2}\sum_{\substack{u\succeq w0\\
|u|\le m
}
}\mu\left(u\right)^{2}+p^{2}\sum_{\substack{u\succeq w2\\
|u|\le m
}
}\mu\left(u\right)^{2}.
\]
Applying the lemma on the two child branches gives 
\begin{align*}
\langle h^{\left(p\right)}_{w},K_{m}h^{\left(p\right)}_{w}\rangle & =\left(\left(1-p\right)^{2}p^{2}+p^{2}\left(1-p\right)^{2}\right)\mu\left(w\right)^{2}\sum^{m-|w|-1}_{j=0}q^{j}\\
 & =2p^{2}\left(1-p\right)^{2}\mu(w)^{2}\sum^{m-|w|-1}_{j=0}q^{j}.
\end{align*}
hence \prettyref{eq:4-4} holds. 

We prove $(5)$. Let $u\neq v$.

If $u$ and $v$ are incomparable, then no cylinder $C_{t}$ can be
simultaneously contained in a descendant of $u0$ or $u2$ and in
a descendant of $v0$ or $v2$. Hence for every $t$, $\langle h^{\left(p\right)}_{u},1_{t}\rangle\langle1_{t},h^{\left(p\right)}_{v}\rangle=0$,
and therefore $\langle h^{\left(p\right)}_{u},K_{m}h^{\left(p\right)}_{v}\rangle=0$.

Now suppose $u$ is a proper prefix of $v$. First assume that the
first symbol after $u$ in $v$ is $0$, so every descendant of $v$
lies inside $u0$. Then for every $t\succeq v0$ or $t\succeq v2$,
$\langle h^{\left(p\right)}_{u},1_{t}\rangle=\left(1-p\right)\mu\left(t\right)$.
Hence 
\[
\langle h^{\left(p\right)}_{u},K_{m}h^{\left(p\right)}_{v}\rangle=\left(1-p\right)^{2}\sum_{\substack{t\succeq v0\\
|t|\le m
}
}\mu\left(t\right)^{2}-p\left(1-p\right)\sum_{\substack{t\succeq v2\\
|t|\le m
}
}\mu\left(t\right)^{2}.
\]
Using the lemma and the identities $\mu\left(v0\right)=p\mu\left(v\right)$,
$\mu\left(v2\right)=\left(1-p\right)\mu\left(v\right)$, we get 
\begin{align*}
\langle h^{\left(p\right)}_{u},K_{m}h^{\left(p\right)}_{v}\rangle & =\left(\left(1-p\right)^{2}p^{2}-p\left(1-p\right)^{3}\right)\mu\left(v\right)^{2}\sum^{m-|v|-1}_{j=0}q^{j}\\
 & =p\left(1-p\right)^{2}\left(2p-1\right)\mu\left(v\right)^{2}\sum^{m-|v|-1}_{j=0}q^{j}.
\end{align*}

If instead the first symbol after $u$ in $v$ is $2$, then every
descendant of $v$ lies inside $u2$, and so $\langle h^{\left(p\right)}_{u},1_{t}\rangle=-p\mu\left(t\right)$
for every $t\succeq v0$ or $t\succeq v2$. Therefore 
\[
\langle h^{\left(p\right)}_{u},K_{m}h^{\left(p\right)}_{v}\rangle=-p\left(1-p\right)\sum_{\substack{t\succeq v0\\
|t|\le m
}
}\mu(t)^{2}+p^{2}\sum_{\substack{t\succeq v2\\
|t|\le m
}
}\mu(t)^{2}.
\]
Applying the lemma again gives 
\[
\langle h^{\left(p\right)}_{u},K_{m}h^{\left(p\right)}_{v}\rangle=-p^{2}\left(1-p\right)\left(2p-1\right)\mu(v)^{2}\sum^{m-|v|-1}_{j=0}q^{j}.
\]

The final statement now follows immediately. In the orthonormal basis
\[
e_{\phi}=\phi,\qquad e_{w}=\frac{h^{\left(p\right)}_{w}}{\sqrt{p\left(1-p\right)\mu(w)}},
\]
the matrix of $K_{m}|_{\mathcal{F}_{m}}$ is real symmetric. By $(5)$,
the difference-difference block vanishes unless the two indices are
comparable, so it is tree-banded.

Finally, if $p=\frac{1}{2}$, then all off-diagonal coefficients in
$(3)$ and $(5)$ vanish because of the factor $2p-1$, so the weighted
Haar basis diagonalizes $K_{m}$. Conversely, if $p\neq\frac{1}{2}$,
then taking $w=\varnothing$ in $(3)$ gives 
\[
\langle h^{\left(p\right)}_{\varnothing},K_{m}\phi\rangle=p\left(1-p\right)\left(2p-1\right)\sum^{m-1}_{j=0}q^{j}\neq0,
\]
since $0<p<1$ and $\sum^{m-1}_{j=0}q^{j}>0$. Hence the matrix is
not diagonal. Therefore $p=\frac{1}{2}$ is the unique parameter for
which $K_{m}$ is diagonal in the weighted Haar basis.
\end{proof}

\prettyref{thm:4-1} gives a finite level orthogonal basis adapted
to the weighted tree and shows that $K_{m}\big|_{\mathcal{F}_{m}}$
is sparse in that basis, with nonzero entries only along comparable
chains. A second description of $K_{m}$ comes from the cylinder filtration:
it expresses $K_{m}$ as a sum of level-wise contributions built from
conditional expectation and the corresponding mass multipliers.
\begin{prop}
\label{prop:4-3} For $m\ge0$, the finite cylinder frame operator
satisfies 
\[
K_{m}=\sum^{m}_{n=0}D_{n}E_{n}
\]
on $L^{2}\left(\mu_{p}\right)$, where $E_{n}$ and $D_{n}$ are the
level-$n$ conditional expectation and mass operators from \prettyref{sec:2}. 
\end{prop}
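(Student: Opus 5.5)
The plan is to verify the identity level by level, directly from the definitions \eqref{eq:2-3}, \eqref{eq:2-4} and \eqref{eq:2-7}. Group the sum defining $K_m$ by word length:
\[
K_{m}f=\sum_{n=0}^{m}\;\sum_{|u|=n}\left\langle 1_{C_{u}},f\right\rangle 1_{C_{u}}.
\]
It therefore suffices to show that, for each fixed $n$,
\[
\sum_{|u|=n}\left\langle 1_{C_{u}},f\right\rangle 1_{C_{u}}=D_{n}E_{n}f,\qquad f\in L^{2}(\mu_{p}).
\]
Summing this over $n=0,\dots,m$ then gives the proposition.

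For the level-$n$ identity, start from the formula \eqref{eq:2-3} for the conditional expectation. Since the level-$n$ cylinders partition $C$ modulo null sets,
\[
E_{n}f=\sum_{|u|=n}\frac{1}{\mu_{p}(C_{u})}\left(\int_{C_{u}}f\,d\mu_{p}\right)1_{C_{u}}.
\]
Here $\mu_{p}(C_{u})=p^{N_0(u)}(1-p)^{N_2(u)}>0$, so there is no division by zero. Now apply $D_n$, which is multiplication by the function equal to $\mu_{p}(C_{w})$ on each $C_{w}$ with $|w|=n$. Because these cylinders are disjoint modulo null sets, $1_{C_{w}}1_{C_{u}}=\delta_{wu}1_{C_{u}}$ almost everywhere. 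Hence the multiplier simply cancels the normalization $1/\mu_{p}(C_{u})$ on each cylinder, leaving
\[
D_{n}E_{n}f=\sum_{|u|=n}\left(\int_{C_{u}}f\,d\mu_{p}\right)1_{C_{u}}.
\]
Finally, $\int_{C_u}f\,d\mu_p=\langle 1_{C_u},f\rangle$; if the inner product is conjugate-linear in the first slot, this is still correct because $1_{C_u}$ is real-valued.

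There is no real obstacle here. The only points needing care are the use of almost-everywhere disjointness of same-level cylinders, the positivity of cylinder masses, and the boundedness of $D_n$ (it is multiplication by a bounded function with values at most $1$). Together these guarantee that the identity holds as an equality of bounded operators on all of $L^{2}(\mu_{p})$, not just on $\mathcal{F}_m$.

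As a consistency check, the case $p=\tfrac12$ gives $D_n=2^{-n}I$, which recovers \eqref{eq:3-3}.
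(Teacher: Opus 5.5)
Your proposal is correct and follows essentially the same route as the paper. You group $K_m$ by word length and check that on each level-$n$ cylinder the mass multiplier $D_n$ cancels the normalization $1/\mu_p(C_u)$ in the conditional expectation; the paper does the same computation cylinder by cylinder, via $\langle 1_{C_u},f\rangle 1_{C_u}=\mu_p(C_u)1_{C_u}E_nf$, before summing over $|u|=n$.
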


\begin{proof}
Fix $f\in L^{2}\left(\mu_{p}\right)$. By definition, $K_{m}f=\sum^{m}_{n=0}\sum_{|u|=n}\left\langle 1_{C_{u}},f\right\rangle 1_{C_{u}}$.
For each word $u$ with $|u|=n$, 
\[
\left\langle 1_{C_{u}},f\right\rangle =\int_{C_{u}}fd\mu_{p}.
\]
Since $E_{n}f$ is constant on $C_{u}$, \prettyref{eq:2-3} gives
\[
\left(E_{n}f\right)\big|_{C_{u}}=\frac{1}{\mu_{p}\left(C_{u}\right)}\int_{C_{u}}fd\mu_{p}.
\]
Hence 
\[
\left\langle 1_{C_{u}},f\right\rangle 1_{C_{u}}=\mu_{p}\left(C_{u}\right)1_{C_{u}}E_{n}f.
\]
Summing over all words $u$ with $|u|=n$ yields 
\[
\sum_{|u|=n}\left\langle 1_{C_{u}},f\right\rangle 1_{C_{u}}=\sum_{|u|=n}\mu_{p}\left(C_{u}\right)1_{C_{u}}E_{n}f=D_{n}E_{n}f.
\]
Now sum in $n$ from $0$ to $m$. 
\end{proof}

The filtration representation makes the geometric support transparent.
In particular, it gives a short proof of the incomparable vanishing
in the weighted Haar basis by combining the projection identities
for $E_{n}$ with disjointness of cylinder supports.
\begin{cor}
\label{cor:4-4} In the orthonormal basis \prettyref{eq:4-5}, the
following gives an alternative structural proof of the incomparable
vanishing in \prettyref{thm:4-1}: if $u$ and $v$ are incomparable,
then 
\begin{equation}
\left\langle e_{u},K_{m}e_{v}\right\rangle =0.\label{eq:4-6}
\end{equation}
\end{cor}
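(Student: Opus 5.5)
We plan to use the filtration formula of \prettyref{prop:4-3}, $K_{m}=\sum_{n=0}^{m}D_{n}E_{n}$, and show that each summand $\langle e_{u},D_{n}E_{n}e_{v}\rangle$ vanishes separately when $u$ and $v$ are incomparable. Since $e_{u},e_{v}$ are scalar multiples of $h^{(p)}_{u},h^{(p)}_{v}$, it suffices to show
\[
\left\langle h^{(p)}_{u},D_{n}E_{n}h^{(p)}_{v}\right\rangle =0\qquad(0\le n\le m).
\]

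The case split is by the level $n$ relative to $|v|$. First, suppose $n\le|v|$. Then every level-$n$ cylinder $C_{w}$ either contains $C_{v}$ or is disjoint from it. In the first case, $\int_{C_{w}}h^{(p)}_{v}\,d\mu_{p}=\int_{C_{v}}h^{(p)}_{v}\,d\mu_{p}=0$, which is the mean-zero computation from the proof of \prettyref{thm:4-1}(1). In the second case the integral is trivially zero. By \eqref{eq:2-3}, this gives $E_{n}h^{(p)}_{v}=0$, so the summand vanishes.

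Second, suppose $n\ge|v|+1$. Then $h^{(p)}_{v}$ is constant on each level-$n$ cylinder, so $E_{n}h^{(p)}_{v}=h^{(p)}_{v}$. Since $D_{n}$ is a multiplication operator, $D_{n}h^{(p)}_{v}$ is supported in $C_{v}$. Meanwhile $h^{(p)}_{u}$ is supported in $C_{u}$, and incomparability of $u,v$ gives $C_{u}\cap C_{v}=\varnothing$ by the nesting rule of \prettyref{sec:2}. Hence the inner product of these two functions with disjoint supports is zero.

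Summing over $n$ yields \eqref{eq:4-6}. The argument is elementary throughout. The only point needing care is the first case: we must use that level-$n$ cylinders with $n\le|v|$ are nested with respect to $C_{v}$ (either containing it or disjoint from it), so that the mean-zero property of $h^{(p)}_{v}$ on $C_{v}$ transfers to every level-$n$ atom. Once this is in place, the disjointness of supports in the second case is immediate.
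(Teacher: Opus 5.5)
Your proof is correct and matches the paper's argument: expand via $K_{m}=\sum_{n=0}^{m}D_{n}E_{n}$, note that $E_{n}$ kills the weighted sibling difference for $n$ up to the word length and fixes it beyond, then use that $D_{n}$ preserves support and that $C_{u}\cap C_{v}=\varnothing$. The only differences are cosmetic: you apply $E_{n}$ to $h^{(p)}_{v}$ rather than $h^{(p)}_{u}$, and you spell out the nesting justification for $E_{n}h^{(p)}_{v}=0$ when $n\le|v|$, which the paper only asserts.
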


\begin{proof}
Let $u$ and $v$ be incomparable words. By \prettyref{prop:4-3},
\[
\langle h^{\left(p\right)}_{v},K_{m}h^{\left(p\right)}_{u}\rangle=\sum^{m}_{n=0}\langle h^{\left(p\right)}_{v},D_{n}E_{n}h^{\left(p\right)}_{u}\rangle.
\]
If $n\le|u|$, then $E_{n}h^{\left(p\right)}_{u}=0$, and if $n\ge|u|+1$,
then $E_{n}h^{\left(p\right)}_{u}=h^{\left(p\right)}_{u}$. Hence
\[
\langle h^{\left(p\right)}_{v},K_{m}h^{\left(p\right)}_{u}\rangle=\sum^{m}_{n=|u|+1}\langle h^{\left(p\right)}_{v},D_{n}h^{\left(p\right)}_{u}\rangle.
\]
Since $D_{n}$ is a multiplication operator, $D_{n}h^{\left(p\right)}_{u}$
is supported in $C_{u}$, while $h^{\left(p\right)}_{v}$ is supported
in $C_{v}$. As $u$ and $v$ are incomparable, $C_{u}\cap C_{v}=\varnothing$,
so every inner product in the sum vanishes. Therefore \prettyref{eq:4-6}
holds. 
\end{proof}

For later use it is convenient to give the resulting matrix coefficients
in the orthonormal basis \prettyref{eq:4-5}. The next corollary simply
restates the formulas of \prettyref{thm:4-1} after normalization,
together with the vanishing from \prettyref{cor:4-4}.
\begin{cor}
\label{cor:4-5} Fix $0<p<1$ and $m\ge1$, and let $K_{m}$, $\phi$,
and $\left\{ e_{\phi}\right\} \cup\left\{ e_{w}\right\} $ be as in
\prettyref{thm:4-1} and \prettyref{eq:4-5}. Write 
\[
\mu(w)=\mu_{p}\left(C_{w}\right),\qquad q=p^{2}+\left(1-p\right)^{2},
\]
and for $w$ with $|w|\le m-1$ set 
\[
G_{w}=\sum^{m-|w|-1}_{j=0}q^{j},\qquad r_{w}=\sqrt{\mu(w)}.
\]
Then the matrix entries of $K_{m}\big|_{\mathcal{F}_{m}}$ in the
orthonormal basis $\left\{ e_{\phi}\right\} \cup\left\{ e_{w}\right\} $
are as follows. 
\begin{enumerate}
\item One has 
\[
\langle e_{\phi},K_{m}e_{\phi}\rangle=\sum^{m}_{n=0}q^{n}.
\]
Moreover, for every $w$ with $|w|\le m-1$, 
\[
\langle e_{w},K_{m}e_{\phi}\rangle=\left(2p-1\right)\sqrt{p\left(1-p\right)}\,r^{3}_{w}G_{w},
\]
and 
\[
\langle e_{w},K_{m}e_{w}\rangle=2p\left(1-p\right)\,r^{2}_{w}G_{w}.
\]
\item If $u\neq v$ are words with $|u|,|v|\le m-1$ and $u$ and $v$ are
incomparable, then 
\[
\langle e_{u},K_{m}e_{v}\rangle=0.
\]
\item If $u$ is a proper prefix of $v$ with $|v|\le m-1$, then 
\[
\langle e_{u},K_{m}e_{v}\rangle=\sigma\left(u,v\right)\left(2p-1\right)\frac{r^{3}_{v}}{r_{u}}G_{v},
\]
where 
\[
\sigma\left(u,v\right)=\begin{cases}
1-p, & \text{if \ensuremath{v} begins with \ensuremath{u0},}\\
-p, & \text{if \ensuremath{v} begins with \ensuremath{u2}.}
\end{cases}
\]
The case $v\prec u$ follows by self-adjointness. 
\end{enumerate}
\end{cor}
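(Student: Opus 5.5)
The plan is to take \prettyref{thm:4-1} and \prettyref{cor:4-4} and rescale. Item (1) is the only genuinely new computation, and items (2)--(3) are pure normalization.

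For the first entry in (1), observe that $e_{\phi}=\phi$ and $\langle\phi,1_{C_u}\rangle=\mu(u)$. Hence
\[
\langle\phi,K_m\phi\rangle=\sum_{|u|\le m}\mu(u)^2 .
\]
I will group this sum by depth $n=|u|$ and apply the lemma in the proof of \prettyref{thm:4-1} with $z=\varnothing$, using $\mu(\varnothing)=1$. This gives $\sum_{|u|=n}\mu(u)^2=q^n$, and summing over $0\le n\le m$ yields $\sum_{n=0}^m q^n$.

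For the other two entries in (1), I divide \eqref{eq:4-3} by the normalizing constant $\sqrt{p(1-p)\mu(w)}$. This produces
\[
p(1-p)(2p-1)\mu(w)^2G_w/\sqrt{p(1-p)\mu(w)}=(2p-1)\sqrt{p(1-p)}\,\mu(w)^{3/2}G_w ,
\]
which is the stated formula, since $\mu(w)^{3/2}=r_w^3$. In the same way, dividing \eqref{eq:4-4} by $p(1-p)\mu(w)$ gives $2p(1-p)\mu(w)G_w=2p(1-p)r_w^2G_w$. Because all functions are real, the order of the inner-product arguments does not matter.

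Item (2) follows directly from \prettyref{cor:4-4}, or equivalently from the incomparable case of \prettyref{thm:4-1}(5), after rescaling by positive constants.

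For item (3), I divide the prefix formula in \prettyref{thm:4-1}(5) by
\[
\sqrt{p(1-p)\mu(u)}\sqrt{p(1-p)\mu(v)}=p(1-p)r_ur_v .
\]
In the case where $v$ begins with $u0$, this gives $(1-p)(2p-1)\mu(v)^2G_v/(r_ur_v)=(1-p)(2p-1)r_v^3G_v/r_u$. The case where $v$ begins with $u2$ works the same way with the factor $-p$, which matches $\sigma(u,v)$. Here $G_v$ is the sum with upper limit $m-|v|-1$, which agrees with its appearance in the theorem. The case $v\prec u$ then follows from self-adjointness of $K_m$ together with realness of the matrix.

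The only step needing any care is the $\phi$–$\phi$ entry, because it is not stated in \prettyref{thm:4-1}; the rest is bookkeeping of the normalization factors.
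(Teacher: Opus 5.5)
Your proposal is correct and follows the paper's proof. The paper computes the $\phi$--$\phi$ entry from the depth-wise identity $\sum_{|u|=n}\mu(u)^2=q^n$, takes (2) from \prettyref{cor:4-4}, and obtains every other entry by dividing \eqref{eq:4-3}, \eqref{eq:4-4}, and \prettyref{thm:4-1}(5) by the normalizing factors, with the same arithmetic you give (e.g.\ $\mu(v)^2/(r_ur_v)=r_v^3/r_u$).
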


\begin{proof}
The first display is 
\[
\langle\phi,K_{m}\phi\rangle=\sum_{|u|\le m}\mu(u)^{2}=\sum^{m}_{n=0}\sum_{|w|=n}\mu(w)^{2}=\sum^{m}_{n=0}q^{n}.
\]
For $|w|\le m-1$, 
\begin{align*}
\langle e_{w},K_{m}e_{\phi}\rangle & =\frac{1}{\sqrt{p\left(1-p\right)\mu(w)}}\langle h^{\left(p\right)}_{w},K_{m}\phi\rangle,\\
\langle e_{w},K_{m}e_{w}\rangle & =\frac{1}{p\left(1-p\right)\mu(w)}\langle h^{\left(p\right)}_{w},K_{m}h^{\left(p\right)}_{w}\rangle,
\end{align*}
so the stated formulas follow by dividing \prettyref{eq:4-3} and
\prettyref{eq:4-4} by the normalization factors in \prettyref{eq:4-5}.
The vanishing in (2) is \prettyref{cor:4-4}.

Finally, if $u\prec v$, then 
\[
\langle e_{u},K_{m}e_{v}\rangle=\frac{1}{p\left(1-p\right)\sqrt{\mu(u)\mu(v)}}\langle h^{\left(p\right)}_{u},K_{m}h^{\left(p\right)}_{v}\rangle,
\]
and the stated cases follow by dividing the corresponding formula
in \prettyref{thm:4-1}(5) by the same normalization. The reverse
comparable case follows from self-adjointness. 
\end{proof}

\section{The limit operator $K_{\infty}$ for $p\protect\neq1/2$}\label{sec:5}

We now pass from the finite operators $K_{m}$ to their limit. The
filtration formula from \prettyref{prop:4-3} gives a norm convergent
series representation for $K_{\infty}$ and yields immediate control
of the approximation error $\left\Vert K_{\infty}-K_{m}\right\Vert $.
This produces a compact positive self-adjoint operator on $L^{2}\left(\mu_{p}\right)$.

The next step is to identify the internal shape of this limit operator.
The weighted Haar family from \prettyref{sec:4} extends from the
finite spaces $\mathcal{F}_{m}$ to an orthonormal basis of $L^{2}\left(\mu_{p}\right)$,
and in that basis $K_{\infty}$ has an explicit infinite matrix supported
on comparable pairs of words. The final corollary then extracts the
first finite-dimensional compression from this matrix.
\begin{prop}
\label{prop:5-1} Set 
\[
\alpha=\max\left\{ p,1-p\right\} ,\qquad q=p^{2}+\left(1-p\right)^{2}.
\]
Then the following hold. 
\begin{enumerate}
\item The series 
\[
K_{\infty}=\sum^{\infty}_{n=0}D_{n}E_{n}
\]
converges in operator norm on $L^{2}\left(\mu_{p}\right)$. Moreover,
for every $m\ge0$, 
\[
\left\Vert K_{\infty}-K_{m}\right\Vert \le\sum^{\infty}_{n=m+1}\alpha^{n}=\frac{\alpha^{m+1}}{1-\alpha}.
\]
In particular, $K_{\infty}$ is compact, positive, and self-adjoint. 
\item Writing $\lambda_{\max}\left(T\right)$ for the top eigenvalue of
a compact positive self-adjoint operator $T$, 
\[
\lambda_{\max}\left(K_{m}\right)\to\lambda_{\max}\left(K_{\infty}\right),\qquad\left|\lambda_{\max}\left(K_{\infty}\right)-\lambda_{\max}\left(K_{m}\right)\right|\le\frac{\alpha^{m+1}}{1-\alpha}.
\]
\end{enumerate}
\end{prop}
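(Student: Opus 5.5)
The argument rests on the filtration formula of \prettyref{prop:4-3} together with a norm bound for each level term $D_{n}E_{n}$. Since $E_{n}$ is an orthogonal projection, $\Vert E_{n}\Vert\le1$. Since $D_{n}$ is multiplication by a bounded function, its norm is the sup of that function:
\[
\Vert D_{n}\Vert=\max_{|w|=n}\mu_{p}(C_{w})=\max_{|w|=n}p^{N_{0}(w)}(1-p)^{N_{2}(w)}=\alpha^{n}.
\]
Hence $\Vert D_{n}E_{n}\Vert\le\alpha^{n}$. Because $0<p<1$ gives $\alpha<1$, the series $\sum_{n}D_{n}E_{n}$ converges absolutely in operator norm. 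Subtracting the partial sum $K_{m}=\sum_{n\le m}D_{n}E_{n}$ then yields
\[
\Vert K_{\infty}-K_{m}\Vert\le\sum_{n>m}\alpha^{n}=\frac{\alpha^{m+1}}{1-\alpha}.
\]

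Next, the structural properties of $K_{\infty}$ pass from the $K_{m}$. Each $K_{m}=\Phi_{m}\Phi_{m}^{*}$ is finite rank, positive and self-adjoint. The finite-rank operators form a norm-dense subset of the compacts, so the norm limit $K_{\infty}$ is compact. Self-adjointness passes to norm limits. Positivity also passes, since $\langle f,K_{\infty}f\rangle=\lim_{m}\langle f,K_{m}f\rangle\ge0$. One could also check directly that $D_{n}$ commutes with $E_{n}$, because $D_{n}$ is a $\mathcal{G}_{n}$-measurable multiplier. Then $D_{n}E_{n}=E_{n}D_{n}E_{n}\ge0$, but the limit argument already suffices.

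For part (2), I would use that for a positive compact self-adjoint operator the top eigenvalue equals the norm:
\[
\lambda_{\max}(T)=\sup_{\Vert f\Vert=1}\langle f,Tf\rangle=\Vert T\Vert.
\]
The nonzero top eigenvalue is attained by compactness. In the degenerate case it is simply $0=\Vert T\Vert$, which does not arise here since $\langle\phi,K_{m}\phi\rangle\ge1$. Then the reverse triangle inequality gives
\[
|\lambda_{\max}(K_{\infty})-\lambda_{\max}(K_{m})|=\bigl|\Vert K_{\infty}\Vert-\Vert K_{m}\Vert\bigr|\le\Vert K_{\infty}-K_{m}\Vert,
\]
and part (1) bounds the right side by $\alpha^{m+1}/(1-\alpha)$. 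Convergence follows since this bound tends to $0$. Equivalently, one can take the difference of the two Rayleigh-quotient suprema.

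I do not expect any step to be a real obstacle. The only point needing care is the identification $\Vert D_{n}\Vert=\alpha^{n}$: the maximal level-$n$ cylinder mass is attained by the word using only the heavier digit, and no finer estimate is needed.
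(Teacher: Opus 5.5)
Your proposal is correct and follows the paper's proof essentially step for step: the filtration formula with $\Vert D_nE_n\Vert\le\Vert D_n\Vert=\alpha^n$ gives absolute norm convergence and the tail bound, and compactness, positivity and self-adjointness pass to the norm limit. Part (2) then follows from $\lambda_{\max}(T)=\Vert T\Vert$ for positive compact $T$ and the reverse triangle inequality, which you justify slightly more explicitly than the paper does; one small correction is that compactness of $K_\infty$ rests on the compact operators being norm-closed (and containing the finite-rank ones), not on finite-rank operators being dense in the compacts.
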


\begin{proof}
For (1), note that $\left\Vert E_{n}\right\Vert =1$ and $D_{n}$
is multiplication by the level-$n$ cylinder mass function 
\[
x\mapsto\sum_{|w|=n}\mu_{p}\left(C_{w}\right)1_{C_{w}}\left(x\right).
\]
Hence 
\[
\left\Vert D_{n}\right\Vert =\max_{|w|=n}\mu_{p}\left(C_{w}\right)=\alpha^{n}.
\]
Therefore 
\[
\sum^{\infty}_{n=0}\left\Vert D_{n}E_{n}\right\Vert \le\sum^{\infty}_{n=0}\left\Vert D_{n}\right\Vert =\sum^{\infty}_{n=0}\alpha^{n}<\infty,
\]
so $\sum^{\infty}_{n=0}D_{n}E_{n}$ converges in operator norm, and
\[
\left\Vert K_{\infty}-K_{m}\right\Vert \le\sum^{\infty}_{n=m+1}\left\Vert D_{n}E_{n}\right\Vert \le\sum^{\infty}_{n=m+1}\alpha^{n}=\frac{\alpha^{m+1}}{1-\alpha}.
\]
Since $K_{\infty}$ is a norm limit of the finite-rank operators $K_{m}$,
it is compact. Positivity and self-adjointness are preserved under
norm limits.

For (2), since $K_{m}$ and $K_{\infty}$ are compact positive self-adjoint
operators, 
\[
\left|\lambda_{\max}\left(K_{\infty}\right)-\lambda_{\max}\left(K_{m}\right)\right|\le\left\Vert K_{\infty}-K_{m}\right\Vert ,
\]
and the stated bound follows from (1).
\end{proof}

We next pass the weighted Haar formulas of \prettyref{thm:4-1} to
the limit and obtain an explicit matrix model for $K_{\infty}$ on
the full space.
\begin{prop}
\label{prop:5-2} Fix $0<p<1$ and let $K_{\infty}$ be the norm-limit
operator from \prettyref{prop:5-1}. Write 
\[
\phi=1_{C},\qquad\mu(w)=\mu_{p}\left(C_{w}\right),\qquad q=p^{2}+\left(1-p\right)^{2}.
\]
For each finite word $w\in\left\{ 0,2\right\} ^{<\infty}$, define
\[
e_{\phi}=\phi,\qquad e_{w}=\frac{h^{\left(p\right)}_{w}}{\sqrt{p\left(1-p\right)\mu(w)}}
\]
as in \prettyref{eq:4-5}. Then $\left\{ e_{\phi}\right\} \cup\left\{ e_{w}:w\in\left\{ 0,2\right\} ^{<\infty}\right\} $
is an orthonormal basis of $L^{2}\left(\mu_{p}\right)$, and the matrix
of $K_{\infty}$ in this basis is real symmetric and tree-banded:
\[
\langle e_{u},K_{\infty}e_{v}\rangle=0\qquad\text{unless \ensuremath{u} and \ensuremath{v} are comparable.}
\]

More precisely, the matrix entries are given by: 
\begin{enumerate}
\item One has 
\[
\langle e_{\phi},K_{\infty}e_{\phi}\rangle=\sum^{\infty}_{n=0}q^{n}=\frac{1}{1-q}.
\]
For every finite word $w$, 
\[
\langle e_{w},K_{\infty}e_{\phi}\rangle=\frac{2p-1}{2\sqrt{p\left(1-p\right)}}\,\mu(w)^{3/2},\qquad\langle e_{w},K_{\infty}e_{w}\rangle=\mu(w).
\]
\item If $u\neq v$ are incomparable, then $\langle e_{u},K_{\infty}e_{v}\rangle=0$.
\item If $u$ is a proper prefix of $v$, then 
\[
\langle e_{u},K_{\infty}e_{v}\rangle=\begin{cases}
{\displaystyle \frac{2p-1}{2p}\,\frac{\mu(v)^{3/2}}{\mu(u)^{1/2}},} & \text{if \ensuremath{v} begins with \ensuremath{u0},}\\[2.2ex]
{\displaystyle -\frac{2p-1}{2\left(1-p\right)}\,\frac{\mu(v)^{3/2}}{\mu(u)^{1/2}},} & \text{if \ensuremath{v} begins with \ensuremath{u2}.}
\end{cases}
\]
The case $v\prec u$ follows by self-adjointness. 
\end{enumerate}
\end{prop}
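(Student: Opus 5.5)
The plan has two independent parts: completeness of the weighted Haar family in $L^{2}(\mu_{p})$, and the computation of the matrix entries by passing to the limit in \prettyref{cor:4-5}, using the norm convergence from \prettyref{prop:5-1}.

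\textbf{Completeness.} Orthonormality of $\{e_{\phi}\}\cup\{e_{w}\}$ is already contained in \prettyref{thm:4-1}(1). Any two vectors of the family lie in a common $\mathcal{F}_{m}$ once $m>\max(|u|,|v|)$, so their pairwise inner products are the ones computed there. By \prettyref{thm:4-1}(1), the closed span of the family equals the closure of $\bigcup_{m}\mathcal{F}_{m}$. Since $\mathcal{F}_{m}=E_{m}L^{2}(\mu_{p})$, it suffices to show $E_{m}f\to f$ for every $f\in L^{2}(\mu_{p})$.

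I would argue this in one of two ways:
\begin{enumerate}
\item The cylinders $C_{w}$ form a base for the relative topology of $C$, since $\operatorname{diam} C_{w}=3^{-|w|}$. Hence $\bigvee_{n}\mathcal{G}_{n}$ is the Borel $\sigma$-algebra of $C$, and the $L^{2}$ martingale convergence theorem gives $E_{m}f\to f$.
\item More elementarily: continuous functions on $C$ are dense in $L^{2}(\mu_{p})$. A continuous $g$ is uniformly continuous, so it is uniformly approximated by functions constant on level-$m$ cylinders, which lie in $\mathcal{F}_{m}$.
\end{enumerate}
I expect this density step to be the only genuinely non-computational point. Even so, it is standard, and I would present the elementary uniform-continuity argument.

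\textbf{Matrix entries.} Fix basis vectors $x,y$, say $e_{u},e_{v}$ or $e_{\phi}$. For every $m>\max(|u|,|v|)$, the entry $\langle x,K_{m}y\rangle$ is given by \prettyref{cor:4-5}. Since $\|K_{\infty}-K_{m}\|\to0$, we have
\[
\langle x,K_{\infty}y\rangle=\lim_{m\to\infty}\langle x,K_{m}y\rangle .
\]
In the formulas of \prettyref{cor:4-5}, the only $m$-dependence is through $G_{w}=\sum_{j=0}^{m-|w|-1}q^{j}$ and $\sum_{n=0}^{m}q^{n}$. Both converge to $1/(1-q)$ because $0<q<1$.

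The key simplification is
\[
1-q=1-p^{2}-(1-p)^{2}=2p(1-p).
\]
Substituting this into the limits gives each claimed entry:
\begin{itemize}
\item[] Diagonal entry: $2p(1-p)\mu(w)/(1-q)=\mu(w)$.
\item[] Entry against $e_{\phi}$: $(2p-1)\sqrt{p(1-p)}\,\mu(w)^{3/2}/(2p(1-p))=\dfrac{(2p-1)}{2\sqrt{p(1-p)}}\,\mu(w)^{3/2}$.
\item[] Prefix case with $v$ beginning with $u0$: $\sigma(u,v)=1-p$, giving the coefficient $(1-p)(2p-1)/(2p(1-p))=(2p-1)/(2p)$.
\item[] Prefix case with $v$ beginning with $u2$: $\sigma(u,v)=-p$, giving the coefficient $-(2p-1)/(2(1-p))$.
\end{itemize}
In both prefix cases the factor $r_{v}^{3}/r_{u}$ becomes $\mu(v)^{3/2}/\mu(u)^{1/2}$.

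\textbf{Structure of the matrix.} Vanishing for incomparable pairs passes to the limit directly from \prettyref{cor:4-4}. Real symmetry follows from self-adjointness of $K_{\infty}$ and the fact that all basis vectors are real-valued. The reverse comparable case $v\prec u$ then follows by symmetry.
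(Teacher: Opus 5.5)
Your proposal is correct and follows essentially the same route as the paper. Orthonormality comes from \prettyref{thm:4-1}, completeness from the density of $\bigcup_{m}\mathcal{F}_{m}$ (i.e.\ of cylinder functions), and the entries are limits of the finite-level coefficients via $\|K_{\infty}-K_{m}\|\to0$ and $1-q=2p(1-p)$.

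The differences are cosmetic. You justify the density step, which the paper simply asserts. You also quote the normalized entries of \prettyref{cor:4-5} and the vanishing of \prettyref{cor:4-4}, where the paper normalizes \prettyref{thm:4-1}(3)--(5) directly.
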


\begin{proof}
We first note that for each $m\ge1$, Theorem \prettyref{thm:4-1}
shows that $\left\{ e_{\phi}\right\} \cup\left\{ e_{w}:|w|\le m-1\right\} $
is an orthonormal basis of $\mathcal{F}_{m}$. In particular, the
family is orthonormal on the union $\bigcup_{m\ge1}\mathcal{F}_{m}$.

The identities 
\[
1_{w0}=h^{\left(p\right)}_{w}+p\,1_{w},\qquad1_{w2}=\left(1-p\right)1_{w}-h^{\left(p\right)}_{w}
\]
imply inductively that every cylinder indicator $1_{C_{u}}$ lies
in the span of 
\[
\left\{ \phi\right\} \cup\left\{ h^{\left(p\right)}_{w}:w\in\left\{ 0,2\right\} ^{<\infty}\right\} ,
\]
hence also in the span of the normalized family $\left\{ e_{\phi}\right\} \cup\left\{ e_{w}\right\} $.
Since the linear span of cylinder indicators is dense in $L^{2}\left(\mu_{p}\right)$,
the orthonormal family $\left\{ e_{\phi}\right\} \cup\left\{ e_{w}\right\} $
is complete and therefore is an orthonormal basis.

Next, fix finite words $u,v$ and note that $K_{m}\to K_{\infty}$
in operator norm by \prettyref{prop:5-1}, hence 
\[
\langle e_{u},K_{\infty}e_{v}\rangle=\lim_{m\to\infty}\langle e_{u},K_{m}e_{v}\rangle.
\]
The finite-level coefficients $\langle e_{u},K_{m}e_{v}\rangle$ are
given by \prettyref{thm:4-1} after normalization by \prettyref{eq:4-5}.
In each case the only dependence on $m$ is through the geometric
sum $\sum^{m-|w|-1}_{j=0}q^{j}$, which converges to $\sum^{\infty}_{j=0}q^{j}=1/(1-q)$.
Since $1-q=2p\left(1-p\right)$, the limits simplify to the stated
formulas.

Finally, the vanishing for incomparable indices holds already at the
finite level by \prettyref{thm:4-1}(5), hence also in the limit. 
\end{proof}

The matrix model in \prettyref{prop:5-2} yields a canonical family
of finite-dimensional compressions. For example, one may restrict
$K_{\infty}$ to the spans of weighted Haar vectors supported on a
fixed branch, or more generally to the spans generated by words up
to a given depth. These compressions form an increasing sequence,
and their top eigenvalues give monotone lower bounds for $\lambda_{\max}\left(K_{\infty}\right)$.
We give only the smallest instance, which already shows the coupling
between the constant mode and the first asymmetric mode at the root.
\begin{cor}
\label{cor:5-3}Let 
\[
\phi=1_{C},\qquad h^{\left(p\right)}_{\varnothing}=\left(1-p\right)1_{C_{0}}-p\,1_{C_{2}},\qquad e_{\varnothing}=\frac{h^{\left(p\right)}_{\varnothing}}{\sqrt{p\left(1-p\right)}}.
\]
In the orthonormal basis $\left\{ \phi,e_{\varnothing}\right\} $,
the compression of $K_{\infty}$ to $\mathrm{span}\left\{ \phi,e_{\varnothing}\right\} $
has matrix 
\[
M\left(p\right)=\begin{pmatrix}\frac{1}{1-q} & \frac{\left(2p-1\right)\sqrt{p\left(1-p\right)}}{1-q}\\[1.2ex]
\frac{\left(2p-1\right)\sqrt{p\left(1-p\right)}}{1-q} & 1
\end{pmatrix}.
\]
In particular, 
\[
\lambda_{\max}\left(K_{\infty}\right)\ge\lambda_{\max}\left(M\left(p\right)\right),
\]
where 
\[
\lambda_{\max}\left(M\left(p\right)\right)=\frac{1}{2}\left(\frac{1}{1-q}+1\right)+\frac{1}{2}\sqrt{\left(\frac{1}{1-q}-1\right)^{2}+\frac{4\left(2p-1\right)^{2}p\left(1-p\right)}{\left(1-q\right)^{2}}}.
\]
\end{cor}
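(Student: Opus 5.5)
The plan is to read the compression directly off the infinite matrix model of \prettyref{prop:5-2}, specialized to the root word $w=\varnothing$, and then apply the Rayleigh--Ritz variational characterization of the top eigenvalue.

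First, I identify the entries. By \prettyref{prop:5-2}, $\{e_{\phi}\}\cup\{e_{w}\}$ is orthonormal, so $\{\phi,e_{\varnothing}\}$ is an orthonormal basis of the two-dimensional span. The compression $PK_{\infty}P$, with $P$ the orthogonal projection onto this span, therefore has matrix entries $\langle e_{\phi},K_{\infty}e_{\phi}\rangle$, $\langle e_{\varnothing},K_{\infty}e_{\phi}\rangle$ and $\langle e_{\varnothing},K_{\infty}e_{\varnothing}\rangle$; the remaining entry follows by symmetry. Item (1) of \prettyref{prop:5-2} gives these entries directly, using $\mu(\varnothing)=1$:
\begin{itemize}
\item the $(1,1)$ entry is $1/(1-q)$;
\item the $(2,2)$ entry is $\mu(\varnothing)=1$;
\item the off-diagonal entry is $\frac{2p-1}{2\sqrt{p(1-p)}}$.
\end{itemize}
The only point to check is the form of the off-diagonal entry. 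Since $1-q=2p(1-p)$, we have
\[
\frac{2p-1}{2\sqrt{p(1-p)}}=\frac{(2p-1)\sqrt{p(1-p)}}{2p(1-p)}=\frac{(2p-1)\sqrt{p(1-p)}}{1-q},
\]
which matches the stated $M(p)$. This is also consistent with \prettyref{cor:4-5}(1) in the limit $G_{\varnothing}\to1/(1-q)$.

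Next, the inequality. Since $K_{\infty}$ is compact, positive and self-adjoint by \prettyref{prop:5-1}, we have
\[
\lambda_{\max}(K_{\infty})=\sup_{\|x\|=1}\langle x,K_{\infty}x\rangle .
\]
Restricting the supremum to unit vectors in $\mathrm{span}\{\phi,e_{\varnothing}\}$ can only decrease it. On that subspace, $\langle x,K_{\infty}x\rangle=\langle c,M(p)c\rangle$, where $c$ is the coefficient vector of $x$ in the basis $\{\phi,e_{\varnothing}\}$ and $\|c\|=1$. The restricted supremum is therefore $\lambda_{\max}(M(p))$, which gives $\lambda_{\max}(K_{\infty})\ge\lambda_{\max}(M(p))$.

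Finally, the closed form. For a real symmetric matrix $\begin{pmatrix}a&b\\ b&d\end{pmatrix}$ the top eigenvalue is
\[
\tfrac12(a+d)+\tfrac12\sqrt{(a-d)^{2}+4b^{2}}.
\]
Substituting $a=1/(1-q)$, $d=1$ and $b^{2}=(2p-1)^{2}p(1-p)/(1-q)^{2}$ gives exactly the displayed formula.

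No step is a real obstacle. The only place where care is needed is the algebraic identity $1-q=2p(1-p)$, which reconciles the two forms of the off-diagonal entry.
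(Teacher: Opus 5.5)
Your proposal is correct and follows essentially the same route as the paper. The paper recomputes the three entries from \prettyref{thm:4-1}(3)--(4) at finite depth $m$ and lets $m\to\infty$ using $1-q=2p(1-p)$, whereas you read them off \prettyref{prop:5-2}(1), which packages exactly that limit; the variational bound and the closed-form $2\times2$ eigenvalue formula are then applied identically.
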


\begin{proof}
We use the coefficients already computed in \prettyref{thm:4-1}.
First, 
\[
\left\langle \phi,K_{m}\phi\right\rangle =\sum_{|u|\le m}\left\langle \phi,1_{C_{u}}\right\rangle \left\langle 1_{C_{u}},\phi\right\rangle =\sum_{|u|\le m}\mu_{p}\left(C_{u}\right)^{2}=\sum^{m}_{n=0}\sum_{|w|=n}\mu_{p}\left(C_{w}\right)^{2}.
\]
The level-$n$ sum is $q^{n}$, hence 
\[
\left\langle \phi,K_{m}\phi\right\rangle =\sum^{m}_{n=0}q^{n},\qquad\left\langle \phi,K_{\infty}\phi\right\rangle =\sum^{\infty}_{n=0}q^{n}=\frac{1}{1-q}.
\]
Next, by \prettyref{thm:4-1}(3) with $w=\varnothing$ and $\mu_{p}\left(C_{\varnothing}\right)=1$,
\[
\left\langle h^{\left(p\right)}_{\varnothing},K_{m}\phi\right\rangle =p\left(1-p\right)\left(2p-1\right)\sum^{m-1}_{j=0}q^{j}.
\]
Letting $m\to\infty$ gives 
\[
\left\langle h^{\left(p\right)}_{\varnothing},K_{\infty}\phi\right\rangle =\frac{p\left(1-p\right)\left(2p-1\right)}{1-q},\qquad\left\langle \phi,K_{\infty}e_{\varnothing}\right\rangle =\frac{\left(2p-1\right)\sqrt{p\left(1-p\right)}}{1-q}.
\]
Finally, by \prettyref{thm:4-1}(4) with $w=\varnothing$, 
\[
\left\langle h^{\left(p\right)}_{\varnothing},K_{m}h^{\left(p\right)}_{\varnothing}\right\rangle =2p^{2}\left(1-p\right)^{2}\sum^{m-1}_{j=0}q^{j},
\]
so 
\[
\left\langle e_{\varnothing},K_{\infty}e_{\varnothing}\right\rangle =\frac{1}{p\left(1-p\right)}\left\langle h^{\left(p\right)}_{\varnothing},K_{\infty}h^{\left(p\right)}_{\varnothing}\right\rangle =\frac{2p\left(1-p\right)}{1-q}=1,
\]
since $1-q=2p\left(1-p\right)$. This gives the stated matrix $M\left(p\right)$.

Since $K_{\infty}$ is self-adjoint and $M\left(p\right)$ is the
matrix of its compression to $\mathrm{span}\left\{ \phi,e_{\varnothing}\right\} $,
the variational principle yields 
\[
\lambda_{\max}\left(K_{\infty}\right)\ge\lambda_{\max}\left(M\left(p\right)\right).
\]
The displayed closed form for $\lambda_{\max}\left(M\left(p\right)\right)$
is the standard formula for the top eigenvalue of a real symmetric
$2\times2$ matrix. 
\end{proof}

\section{Self-similar fixed-point identity}\label{sec:6}

We now pass from the matrix description of \prettyref{sec:5} to an
intrinsic operator identity for $K_{\infty}$. The first-level decomposition
of the Cantor set induces two branch isometries on $L^{2}\left(\mu_{p}\right)$,
and these turn the self-similarity of the measure into a self-similar
equation for the limit operator. The main result of this section shows
that $K_{\infty}$ is the unique fixed point of the resulting affine
map on $B\left(L^{2}\left(\mu_{p}\right)\right)$. From this fixed-point
description we then derive a first-level block form for $K_{\infty}$
and a norm convergent potential expansion that will be used in \prettyref{sec:7}.

This construction lies near two familiar literatures. The branch isometries
coming from the first-level Cantor splitting lead to Cuntz-type relations
and to the sort of representations that appear in the IFS and wavelet
literature; see, for example, \cite{MR467330,MR1469149,MR1743534,MR3103223}.
At the same time, the linear part of the fixed-point equation is a
normal completely positive map on $B\left(L^{2}\left(\mu_{p}\right)\right)$,
so it also falls into the general theory of completely positive maps
and their fixed points; see, for example, \cite{MR1976867,MR2805505,MR2839059,MR4550310}.
Here these structures are not imposed abstractly: they are forced
by the cylinder frame operators and the Bernoulli self-similarity.

Fix $0<p<1$ and let $\mu_{p}$ be the Bernoulli Cantor measure on
$C$. Write 
\[
\phi=1_{C},\qquad P_{\phi}f=\left\langle \phi,f\right\rangle \phi.
\]
By \prettyref{prop:5-1}, the operators $K_{m}f=\sum_{|u|\le m}\left\langle 1_{C_{u}},f\right\rangle 1_{C_{u}}$
converge in operator norm to a compact positive self-adjoint operator
\[
K_{\infty}f=\sum_{u\in\left\{ 0,2\right\} ^{<\infty}}\left\langle 1_{C_{u}},f\right\rangle 1_{C_{u}}.
\]

For the two first-level branches define operators $U_{0},U_{2}:L^{2}\left(\mu_{p}\right)\to L^{2}\left(\mu_{p}\right)$
by 
\begin{align}
\left(U_{0}f\right)\left(x\right) & =p^{-1/2}1_{C_{0}}\left(x\right)f\left(S^{-1}_{0}\left(x\right)\right),\label{eq:6-1}\\
\left(U_{2}f\right)\left(x\right) & =\left(1-p\right)^{-1/2}1_{C_{2}}\left(x\right)f\left(S^{-1}_{2}\left(x\right)\right).\label{eq:6-2}
\end{align}

We begin by isolating the branch operators and the identities they
satisfy. These identities give both the orthogonal splitting of $L^{2}\left(\mu_{p}\right)$
along the two first-level branches and the transport formulas for
cylinder indicators.
\begin{lem}
\label{lem:6-1} The operators $U_{0}$ and $U_{2}$ from \prettyref{eq:6-1}--\prettyref{eq:6-2}
are isometries with orthogonal ranges. Their adjoints are given by
\[
\left(U^{*}_{0}g\right)\left(y\right)=\sqrt{p}g\left(S_{0}\left(y\right)\right),\quad\left(U^{*}_{2}g\right)\left(y\right)=\sqrt{1-p}g\left(S_{2}\left(y\right)\right),
\]
for $\mu_{p}$-a.e. $y\in C$. Moreover, 
\begin{equation}
\left.\begin{split} & U^{*}_{0}U_{0}=U^{*}_{2}U_{2}=I\\
 & U_{0}U^{*}_{0}=M_{1_{C_{0}}},\quad U_{2}U^{*}_{2}=M_{1_{C_{2}}}\\
 & U_{0}U^{*}_{0}+U_{2}U^{*}_{2}=I.
\end{split}
\right\} \label{eq:6-3}
\end{equation}
In particular, 
\[
L^{2}\left(\mu_{p}\right)=U_{0}L^{2}\left(\mu_{p}\right)\oplus U_{2}L^{2}\left(\mu_{p}\right).
\]
Finally, for every finite word $w$, 
\begin{equation}
1_{C_{0w}}=\sqrt{p}U_{0}1_{C_{w}},\quad1_{C_{2w}}=\sqrt{1-p}U_{2}1_{C_{w}}.\label{eq:6-4}
\end{equation}
\end{lem}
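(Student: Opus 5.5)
The whole lemma reduces to one change-of-variables identity coming from the self-similarity \eqref{eq:2-1a}. For a bounded (or $L^{1}$) function $g$ on $C$, \eqref{eq:2-1a} gives
\[
\int g\,d\mu_{p}=p\int g\circ S_{0}\,d\mu_{p}+\left(1-p\right)\int g\circ S_{2}\,d\mu_{p}.
\]
We also use that $S_{0}(C)=C_{0}$ and $S_{2}(C)=C_{2}$ are disjoint, and that $1_{C_{0}}\circ S_{2}=0$ and $1_{C_{2}}\circ S_{0}=0$ on $C$. Applying the identity to $g=1_{C_{0}}h$, with $h$ integrable, kills the $S_{2}$ term and yields
\[
\int_{C_{0}}h\,d\mu_{p}=p\int h\circ S_{0}\,d\mu_{p}. \tag{$\ast$}
\]
The analogous formula holds on $C_{2}$ with the factor $1-p$. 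The rest of the proof is bookkeeping with $(\ast)$; the only point needing care is that $S_{0}^{-1}$ is defined only on $C_{0}$. We handle this by the indicator factor in \eqref{eq:6-1}, so $U_{0}f$ is a well-defined element of $L^{2}(\mu_{p})$ supported in $C_{0}$.

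\emph{Isometry.} Take $h=|U_{0}f|^{2}=p^{-1}1_{C_{0}}|f\circ S_{0}^{-1}|^{2}$ in $(\ast)$. Since $S_{0}^{-1}\circ S_{0}=\mathrm{id}$, this gives $\|U_{0}f\|^{2}=p\cdot p^{-1}\int|f|^{2}d\mu_{p}=\|f\|^{2}$. The computation for $U_{2}$ is the same. The ranges are orthogonal because $U_{0}f$ is supported in $C_{0}$, $U_{2}g$ is supported in $C_{2}$, and $\mu_{p}(C_{0}\cap C_{2})=0$.

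\emph{Adjoints.} Apply $(\ast)$ to $h=\overline{U_{0}f}\,g$:
\[
\langle U_{0}f,g\rangle=p\cdot p^{-1/2}\int\overline{f}\,(g\circ S_{0})\,d\mu_{p}=\langle f,\sqrt{p}\,g\circ S_{0}\rangle.
\]
This identifies $U_{0}^{*}$, and $U_{2}^{*}$ follows in the same way.

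\emph{Relations \eqref{eq:6-3}.} The identity $U_{0}^{*}U_{0}=I$ is just the isometry property. For the range projection,
\[
U_{0}U_{0}^{*}g=p^{-1/2}1_{C_{0}}\,\sqrt{p}\,g\circ S_{0}\circ S_{0}^{-1}=1_{C_{0}}g,
\]
so $U_{0}U_{0}^{*}=M_{1_{C_{0}}}$, and likewise $U_{2}U_{2}^{*}=M_{1_{C_{2}}}$. Since $1_{C_{0}}+1_{C_{2}}=1$ $\mu_{p}$-a.e., we get $U_{0}U_{0}^{*}+U_{2}U_{2}^{*}=I$. The direct sum decomposition of $L^{2}(\mu_{p})$ follows because the two ranges are orthogonal and their range projections sum to $I$.

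\emph{Transport formula \eqref{eq:6-4}.} By the definition $C_{0w}=S_{0}(C_{w})$, for $x\in C_{0}$ we have $1_{C_{w}}(S_{0}^{-1}x)=1_{C_{0w}}(x)$. Hence
\[
\sqrt{p}\,U_{0}1_{C_{w}}=1_{C_{0}}1_{C_{0w}}=1_{C_{0w}},
\]
and similarly $\sqrt{1-p}\,U_{2}1_{C_{w}}=1_{C_{2w}}$. No step is a genuine obstacle.
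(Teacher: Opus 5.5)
Your proof is correct and follows the paper's argument step for step. Isometry, adjoint, range projections, and the transport formula all come from the branch change of variables $\int_{C_0} h\,d\mu_p = p\int h\circ S_0\,d\mu_p$. The only difference is that you derive this identity explicitly from the self-similarity relation \eqref{eq:2-1a}, using $1_{C_0}\circ S_2=0$ on $C$, whereas the paper simply invokes $\mu_p|_{C_0}=p\,\mu_p\circ S_0^{-1}$.
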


\begin{proof}
For $f\in L^{2}\left(\mu_{p}\right)$, using $\mu_{p}\big|_{C_{0}}=p\mu_{p}\circ S^{-1}_{0}$,
we get from \prettyref{eq:6-1} 
\[
\left\Vert U_{0}f\right\Vert ^{2}=\int_{C_{0}}p^{-1}\left|f\left(S^{-1}_{0}\left(x\right)\right)\right|^{2}d\mu_{p}\left(x\right)=\int_{C}\left|f\left(y\right)\right|^{2}d\mu_{p}\left(y\right)=\left\Vert f\right\Vert ^{2}.
\]
Thus $U_{0}$ is an isometry. The same argument using \prettyref{eq:6-2}
shows that $U_{2}$ is an isometry. Since $U_{0}f$ is supported in
$C_{0}$ and $U_{2}g$ is supported in $C_{2}$, their ranges are
orthogonal.

For $f,g\in L^{2}\left(\mu_{p}\right)$, 
\[
\left\langle U_{0}f,g\right\rangle =\int_{C_{0}}p^{-1/2}f\left(S^{-1}_{0}\left(x\right)\right)\overline{g\left(x\right)}d\mu_{p}\left(x\right)=\int_{C}f\left(y\right)\overline{\sqrt{p}g\left(S_{0}\left(y\right)\right)}d\mu_{p}\left(y\right),
\]
so 
\[
\left(U^{*}_{0}g\right)\left(y\right)=\sqrt{p}g\left(S_{0}\left(y\right)\right).
\]
Similarly, 
\[
\left(U^{*}_{2}g\right)\left(y\right)=\sqrt{1-p}g\left(S_{2}\left(y\right)\right).
\]

Since $U_{0}$ and $U_{2}$ are isometries, we obtain $U^{*}_{0}U_{0}=U^{*}_{2}U_{2}=I$.
Also, 
\[
\left(U_{0}U^{*}_{0}g\right)\left(x\right)=1_{C_{0}}\left(x\right)g\left(x\right),\quad\left(U_{2}U^{*}_{2}g\right)\left(x\right)=1_{C_{2}}\left(x\right)g\left(x\right),
\]
which proves \prettyref{eq:6-3}. The orthogonal decomposition of
$L^{2}\left(\mu_{p}\right)$ follows immediately from \prettyref{eq:6-3}.

Finally, using \prettyref{eq:6-1}, 
\[
\left(U_{0}1_{C_{w}}\right)\left(x\right)=p^{-1/2}1_{C_{0}}\left(x\right)1_{C_{w}}\left(S^{-1}_{0}\left(x\right)\right)=p^{-1/2}1_{C_{0w}}\left(x\right),
\]
which gives the first identity in \prettyref{eq:6-4}. The second
is proved in the same way using \prettyref{eq:6-2}. 
\end{proof}

\begin{rem}
\label{rem:6-2} From \prettyref{eq:6-3} one has the Cuntz relations
\[
U^{*}_{i}U_{j}=\delta_{ij}I,\qquad U_{0}U^{*}_{0}+U_{2}U^{*}_{2}=I,
\]
so $\left(U_{0},U_{2}\right)$ is a Cuntz family on $L^{2}\left(\mu_{p}\right)$.
This is natural because the Cantor set splits as $C=C_{0}\dot{\cup}C_{2}$,
and the maps $S_{0},S_{2}$ identify each branch $(C_{0},\mu_{p}\big|_{C_{0}})$
and $(C_{2},\mu_{p}\big|_{C_{2}})$ with a rescaled copy of $\left(C,\mu_{p}\right)$.
Thus $U_{0}$ and $U_{2}$ are the normalized pullbacks implementing
this two-branch self-similarity at the level of $L^{2}$. We will
use only the resulting orthogonal branch decomposition and the conjugation
identities in what follows. 
\end{rem}

With the branch identities in hand, we now define the affine map determined
by the first-level splitting and show that it governs both the finite
operators $K_{m}$ and the limit operator $K_{\infty}$.
\begin{thm}
\label{thm:6-3} Define 
\[
\Psi\left(T\right)=P_{\phi}+pU_{0}TU^{*}_{0}+\left(1-p\right)U_{2}TU^{*}_{2}
\]
for bounded operators $T$ on $L^{2}\left(\mu_{p}\right)$. Then the
following hold. 
\begin{enumerate}
\item For every $m\ge0$, 
\[
K_{m+1}=\Psi\left(K_{m}\right).
\]
In particular, $K_{m}=\Psi^{m}\left(P_{\phi}\right)$. 
\item The operator $K_{\infty}$ satisfies 
\[
K_{\infty}=\Psi\left(K_{\infty}\right).
\]
Equivalently, 
\[
K_{\infty}=P_{\phi}+pU_{0}K_{\infty}U^{*}_{0}+\left(1-p\right)U_{2}K_{\infty}U^{*}_{2}.
\]
\item $K_{\infty}$ is the unique bounded operator on $L^{2}\left(\mu_{p}\right)$
satisfying $T=\Psi\left(T\right)$. 
\end{enumerate}
\end{thm}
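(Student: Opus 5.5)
The plan is to reduce everything to the rank-one transport identity supplied by \prettyref{lem:6-1}. For an operator of the form $T=\sum_{w\in W}\langle 1_{C_w},\cdot\rangle 1_{C_w}$, I will use \eqref{eq:6-4} together with $U_iP_{g}U_i^*=\langle U_ig,\cdot\rangle U_ig$ (here $P_g f=\langle g,f\rangle g$). This gives
\[
pU_0\langle 1_{C_w},U_0^{*}\,\cdot\,\rangle 1_{C_w}\,\cdot\,=\langle \sqrt{p}U_01_{C_w},\cdot\rangle\sqrt{p}U_01_{C_w}=\langle 1_{C_{0w}},\cdot\rangle 1_{C_{0w}},
\]
and similarly $(1-p)U_2(\cdots)U_2^*$ produces $\langle 1_{C_{2w}},\cdot\rangle 1_{C_{2w}}$. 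In words, conjugation by the weighted branch maps prefixes $0$ or $2$ to every word.

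\textbf{Part (1).} Applying the transport identity to $K_m=\sum_{|w|\le m}\langle 1_{C_w},\cdot\rangle 1_{C_w}$ gives
\[
pU_0K_mU_0^*+(1-p)U_2K_mU_2^*=\sum_{1\le|u|\le m+1}\langle 1_{C_u},\cdot\rangle 1_{C_u}.
\]
This holds because every nonempty word $u$ of length at most $m+1$ is uniquely $0w$ or $2w$ with $|w|\le m$. Adding $P_\phi$, which is the $u=\varnothing$ term, gives $K_{m+1}$. Since $K_0=P_\phi$, induction yields $K_m=\Psi^m(P_\phi)$.

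\textbf{Part (2).} The linear part $L(T)=pU_0TU_0^*+(1-p)U_2TU_2^*$ satisfies $\|L(T)\|\le(p+(1-p))\|T\|=\|T\|$, so $\Psi$ is norm-continuous (1-Lipschitz). Taking norm limits in $K_{m+1}=\Psi(K_m)$, using \prettyref{prop:5-1}, gives $K_\infty=\Psi(K_\infty)$.

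\textbf{Part (3).} This is the main obstacle. Mere contractivity, $\|L\|\le1$, does not give uniqueness; I need $L^n(S)\to0$ for every bounded $S$. Suppose $T$ and $T'$ are both fixed points and set $S=T-T'$. Then $S=L(S)=L^n(S)$ for all $n$. Iterating $L$ gives
\[
L^n(S)=\sum_{|w|=n}\mu(w)\,U_wSU_w^*,
\]
where $U_w=U_{w_1}\cdots U_{w_n}$ is an isometry onto $L^2(C_w)$, by iterating \eqref{eq:6-3}. The ranges of the $U_w$ for $|w|=n$ are mutually orthogonal and sum to the identity, since $\sum_{|w|=n}U_wU_w^*=I$. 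Hence $L^n(S)$ is block diagonal with respect to the decomposition $\bigoplus_{|w|=n}L^2(C_w)$, with blocks of norm at most $\mu(w)\|S\|$. It follows that
\[
\|L^n(S)\|=\max_{|w|=n}\mu(w)\|U_wSU_w^*\|\le\alpha^n\|S\|,
\]
with $\alpha=\max\{p,1-p\}<1$ as in \prettyref{prop:5-1}. Therefore $\|S\|\le\alpha^n\|S\|\to0$, so $S=0$.

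The key point to verify carefully is the block-diagonal norm computation: the $L^2$ norms of the blocks combine as a maximum, not as a sum. Once that is in place, $\Psi$ is a strict contraction with constant $\alpha$, and uniqueness also follows directly from Banach's fixed-point theorem.
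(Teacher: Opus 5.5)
Your proposal is correct and matches the paper's proof in all three parts: the transport identity \eqref{eq:6-4} gives (1), $1$-Lipschitz continuity of $\Psi$ together with norm convergence $K_m\to K_\infty$ gives (2), and the orthogonal-range (block-diagonal) norm computation gives (3). The only difference is that you run the block-diagonal estimate at level $n$. The paper does it at $n=1$, obtaining $\|\Psi(T)-\Psi(S)\|=\max\{p,1-p\}\|T-S\|$ directly and then invoking Banach's fixed-point theorem, which is the shortcut you note at the end.
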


\begin{proof}
We prove (1). For $f\in L^{2}\left(\mu_{p}\right)$, 
\[
K_{m+1}f=\left\langle \phi,f\right\rangle \phi+\sum_{|w|\le m}\left\langle 1_{C_{0w}},f\right\rangle 1_{C_{0w}}+\sum_{|w|\le m}\left\langle 1_{C_{2w}},f\right\rangle 1_{C_{2w}}.
\]
Using \prettyref{eq:6-4}, 
\[
\sum_{|w|\le m}\left\langle 1_{C_{0w}},f\right\rangle 1_{C_{0w}}=pU_{0}\left(\sum_{|w|\le m}\left\langle 1_{C_{w}},U^{*}_{0}f\right\rangle 1_{C_{w}}\right)=pU_{0}K_{m}U^{*}_{0}f,
\]
and similarly 
\[
\sum_{|w|\le m}\left\langle 1_{C_{2w}},f\right\rangle 1_{C_{2w}}=\left(1-p\right)U_{2}K_{m}U^{*}_{2}f.
\]
Therefore $K_{m+1}=\Psi\left(K_{m}\right)$. Since $K_{0}=P_{\phi}$,
iteration gives $K_{m}=\Psi^{m}\left(P_{\phi}\right)$.

We prove (2). From the definition of $\Psi$ and $U^{*}_{0}U_{0}=U^{*}_{2}U_{2}=I$
in \prettyref{eq:6-3}, 
\[
\left\Vert \Psi\left(T\right)-\Psi\left(S\right)\right\Vert \le p\left\Vert U_{0}\left(T-S\right)U^{*}_{0}\right\Vert +\left(1-p\right)\left\Vert U_{2}\left(T-S\right)U^{*}_{2}\right\Vert \le\left\Vert T-S\right\Vert ,
\]
so $\Psi$ is norm-continuous. Taking limits in $K_{m+1}=\Psi\left(K_{m}\right)$
and using $K_{m}\to K_{\infty}$ in operator norm gives $K_{\infty}=\Psi\left(K_{\infty}\right)$.

We prove (3). For bounded operators $T,S$, 
\[
\Psi\left(T\right)-\Psi\left(S\right)=pU_{0}\left(T-S\right)U^{*}_{0}+\left(1-p\right)U_{2}\left(T-S\right)U^{*}_{2}.
\]
By \prettyref{eq:6-3}, the two summands act on the orthogonal subspaces
$U_{0}L^{2}\left(\mu_{p}\right)$ and $U_{2}L^{2}\left(\mu_{p}\right)$,
hence 
\[
\left\Vert \Psi\left(T\right)-\Psi\left(S\right)\right\Vert =\max\left\{ p\left\Vert U_{0}\left(T-S\right)U^{*}_{0}\right\Vert ,\left(1-p\right)\left\Vert U_{2}\left(T-S\right)U^{*}_{2}\right\Vert \right\} .
\]
Using again $U^{*}_{0}U_{0}=U^{*}_{2}U_{2}=I$ from \prettyref{eq:6-3},
\[
\left\Vert U_{0}\left(T-S\right)U^{*}_{0}\right\Vert =\left\Vert T-S\right\Vert =\left\Vert U_{2}\left(T-S\right)U^{*}_{2}\right\Vert ,
\]
so 
\[
\left\Vert \Psi\left(T\right)-\Psi\left(S\right)\right\Vert =\max\left\{ p,1-p\right\} \left\Vert T-S\right\Vert .
\]
Set $\alpha=\max\left\{ p,1-p\right\} <1$. Then $\Psi$ is a strict
contraction on $B\left(L^{2}\left(\mu_{p}\right)\right)$, hence has
a unique fixed point by the Banach fixed-point theorem. Since $K_{\infty}$
is a fixed point by (2), it is the unique one. 
\end{proof}

The fixed-point identity becomes more concrete after passing to the
orthogonal decomposition from \prettyref{eq:6-3}. In that decomposition,
$K_{\infty}$ takes an explicit $2\times2$ block form.
\begin{cor}
\label{cor:6-4} Let 
\[
W:L^{2}\left(\mu_{p}\right)\oplus L^{2}\left(\mu_{p}\right)\to L^{2}\left(\mu_{p}\right),\qquad W\left(f,g\right)=U_{0}f+U_{2}g.
\]
Then $W$ is unitary, and 
\begin{equation}
W^{*}K_{\infty}W=\begin{pmatrix}pK_{\infty}+pP_{\phi} & \sqrt{p\left(1-p\right)}P_{\phi}\\[1.2ex]
\sqrt{p\left(1-p\right)}P_{\phi} & \left(1-p\right)K_{\infty}+\left(1-p\right)P_{\phi}
\end{pmatrix}.\label{eq:6-5-1}
\end{equation}
\end{cor}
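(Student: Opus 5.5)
The plan is to compute the four blocks $U_i^{*}K_{\infty}U_j$ directly from the fixed-point identity in \prettyref{thm:6-3}(2). First, $W$ is unitary. Indeed, $W^{*}h=(U_{0}^{*}h,U_{2}^{*}h)$, so $W^{*}W=\mathrm{diag}(U_0^*U_0,U_2^*U_2)=I$ by the relations $U_i^*U_j=\delta_{ij}I$. Likewise $WW^{*}=U_0U_0^*+U_2U_2^*=I$ by \prettyref{eq:6-3}. Hence $W^{*}K_{\infty}W$ is the $2\times2$ operator matrix with $(i,j)$ entry $U_i^{*}K_{\infty}U_j$, for $i,j\in\{0,2\}$.

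Next, substitute $K_\infty=P_\phi+pU_0K_\infty U_0^*+(1-p)U_2K_\infty U_2^*$ into each entry. By the Cuntz relations,
\[
U_i^{*}\bigl(pU_0K_\infty U_0^*+(1-p)U_2K_\infty U_2^*\bigr)U_j
\]
equals $pK_\infty$ when $i=j=0$, equals $(1-p)K_\infty$ when $i=j=2$, and vanishes when $i\neq j$. It therefore remains to compute $U_i^{*}P_\phi U_j$.

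For that term, write $U_i^*P_\phi U_j f=\langle U_j^{*}\phi,f\rangle\,U_i^{*}\phi$, using $\langle\phi,U_jf\rangle=\langle U_j^*\phi,f\rangle$. By \prettyref{lem:6-1}, $(U_0^*\phi)(y)=\sqrt p\,\phi(S_0y)=\sqrt p$ for all $y\in C$, so $U_0^*\phi=\sqrt p\,\phi$. Similarly $U_2^*\phi=\sqrt{1-p}\,\phi$. Alternatively, this follows from \prettyref{eq:6-4} with $w=\varnothing$ and $U_i^*U_i=I$. Consequently
\[
U_i^*P_\phi U_j=c_ic_jP_\phi,\qquad c_0=\sqrt p,\quad c_2=\sqrt{1-p}.
\]
This gives $pP_\phi$, $(1-p)P_\phi$, and the off-diagonal entries $\sqrt{p(1-p)}P_\phi$. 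Assembling the blocks yields \prettyref{eq:6-5-1}.

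There is no real obstacle here; the only point needing care is the adjoint computation $U_i^*\phi=c_i\phi$, together with the bookkeeping of which side the adjoints sit on.
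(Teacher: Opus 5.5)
Your proposal is correct and is essentially the paper's argument: both conjugate the fixed-point identity of \prettyref{thm:6-3}(2) by the unitary $W$ and use the Cuntz relations to kill the cross terms. Your key computation $U_0^{*}\phi=\sqrt{p}\,\phi$, $U_2^{*}\phi=\sqrt{1-p}\,\phi$ is the paper's observation that $W^{*}\phi=b=(\sqrt{p}\,\phi,\sqrt{1-p}\,\phi)$, hence $W^{*}P_{\phi}W=P_{b}$; you simply write this rank-one projection out entry by entry instead of naming it.
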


\begin{proof}
By \prettyref{eq:6-3}, 
\[
L^{2}\left(\mu_{p}\right)=U_{0}L^{2}\left(\mu_{p}\right)\oplus U_{2}L^{2}\left(\mu_{p}\right),
\]
so $W$ is unitary. Set 
\[
b=\left(\sqrt{p}\phi,\sqrt{1-p}\phi\right).
\]
Then $Wb=\phi$, hence 
\[
W^{*}P_{\phi}W=P_{b}.
\]
Using \prettyref{thm:6-3}, 
\[
K_{\infty}=P_{\phi}+pU_{0}K_{\infty}U^{*}_{0}+\left(1-p\right)U_{2}K_{\infty}U^{*}_{2}.
\]
Conjugating by $W$ yields 
\[
W^{*}K_{\infty}W=P_{b}+\begin{pmatrix}pK_{\infty} & 0\\
0 & \left(1-p\right)K_{\infty}
\end{pmatrix}.
\]
Since 
\[
P_{b}=\begin{pmatrix}pP_{\phi} & \sqrt{p\left(1-p\right)}P_{\phi}\\[1.2ex]
\sqrt{p\left(1-p\right)}P_{\phi} & \left(1-p\right)P_{\phi}
\end{pmatrix},
\]
the result follows. 
\end{proof}

We next separate the linear part of the affine fixed-point equation.
Since that linear part is a strict contraction, the fixed-point identity
unfolds into a norm convergent series expansion for $K_{\infty}$.
\begin{cor}
\label{cor:6-6} Define 
\[
\Phi\left(T\right)=pU_{0}TU^{*}_{0}+\left(1-p\right)U_{2}TU^{*}_{2}
\]
for $T\in B\left(L^{2}\left(\mu_{p}\right)\right)$. Then $\Phi$
is a normal completely positive map, 
\begin{equation}
\left\Vert \Phi\right\Vert =\max\left\{ p,1-p\right\} <1,\label{eq:6-6}
\end{equation}
and 
\begin{equation}
K_{\infty}=\sum^{\infty}_{n=0}\Phi^{n}\left(P_{\phi}\right)\label{eq:6-7}
\end{equation}
with convergence in operator norm. Equivalently, 
\begin{equation}
K_{\infty}=\left(I-\Phi\right)^{-1}\left(P_{\phi}\right).\label{eq:6-8}
\end{equation}
\end{cor}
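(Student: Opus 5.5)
The plan is to verify the three claims about $\Phi$ in turn, reusing the norm computation already made in the proof of \prettyref{thm:6-3}(3).

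\emph{Complete positivity and normality.} I will write $\Phi$ in Kraus form
\[
\Phi(T)=V_{0}TV_{0}^{*}+V_{2}TV_{2}^{*},\qquad V_{0}=\sqrt{p}\,U_{0},\quad V_{2}=\sqrt{1-p}\,U_{2}.
\]
Any map of the form $T\mapsto VTV^{*}$ is completely positive: on $M_{k}(B(H))$ its $k$-fold amplification is conjugation by $I_{k}\otimes V$. Sums of completely positive maps are completely positive. Normality (weak$^{*}$ continuity) will follow because $T\mapsto VTV^{*}$ is continuous for the $\sigma$-weak topology; indeed $\operatorname{tr}(\rho VTV^{*})=\operatorname{tr}(V^{*}\rho V\,T)$ and $V^{*}\rho V$ is trace class whenever $\rho$ is.

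\emph{The norm.} The computation in the proof of \prettyref{thm:6-3}(3) applies verbatim with $T-S$ replaced by $T$. The two summands $pU_{0}TU_{0}^{*}$ and $(1-p)U_{2}TU_{2}^{*}$ act between the orthogonal ranges given by \prettyref{eq:6-3}, so $\Phi(T)$ is block diagonal under $W$. Combined with $U_{i}^{*}U_{i}=I$, this gives $\|\Phi(T)\|=\max\{p,1-p\}\|T\|$ for every $T$. Taking the supremum over $\|T\|\le1$, with equality attained at $T=I$ or at any nonzero $T$, yields \prettyref{eq:6-6}. As an alternative check, for a completely positive map one has $\|\Phi\|=\|\Phi(I)\|$, and $\Phi(I)=pM_{1_{C_{0}}}+(1-p)M_{1_{C_{2}}}$ has norm $\max\{p,1-p\}$.

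\emph{The series and the inverse.} Since $\|\Phi^{n}\|\le\alpha^{n}$ with $\alpha<1$, the Neumann series $\sum_{n\ge0}\Phi^{n}$ converges in the operator norm on $B(B(L^{2}(\mu_{p})))$ to $(I-\Phi)^{-1}$. Consequently $\sum_{n\ge0}\Phi^{n}(P_{\phi})$ converges in operator norm and equals $(I-\Phi)^{-1}(P_{\phi})$. To identify the limit with $K_{\infty}$, I will use $\Psi(T)=P_{\phi}+\Phi(T)$, so that \prettyref{thm:6-3}(2) reads $(I-\Phi)(K_{\infty})=P_{\phi}$. Invertibility of $I-\Phi$ then gives \prettyref{eq:6-8}, and hence \prettyref{eq:6-7}. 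A second route is the identity $K_{m}=\Psi^{m}(P_{\phi})=\sum_{n=0}^{m}\Phi^{n}(P_{\phi})$, proved by induction from \prettyref{thm:6-3}(1), after which one lets $m\to\infty$.

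The only step requiring care is the exact norm equality. It rests on the orthogonality of the ranges of $U_{0}$ and $U_{2}$, and I will handle it by citing the block-diagonal argument already given for \prettyref{thm:6-3}(3).
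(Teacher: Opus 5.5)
Your proposal is correct and follows the paper's proof essentially step for step: the same Kraus form with $V_{0}=\sqrt{p}\,U_{0}$, $V_{2}=\sqrt{1-p}\,U_{2}$ gives normality and complete positivity, and the same block-diagonal computation from the orthogonal ranges in \eqref{eq:6-3} gives $\|\Phi(T)\|=\max\{p,1-p\}\|T\|$. The one difference is how you identify the sum. Your primary route inverts $I-\Phi$ in the fixed-point identity $(I-\Phi)(K_{\infty})=P_{\phi}$, whereas the paper uses exactly your second route, $K_{m}=\sum_{n=0}^{m}\Phi^{n}(P_{\phi})$ followed by $m\to\infty$; both are valid.
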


\begin{proof}
Set $V_{0}=\sqrt{p}U_{0}$, $V_{2}=\sqrt{1-p}U_{2}$. Then 
\[
\Phi\left(T\right)=V_{0}TV^{*}_{0}+V_{2}TV^{*}_{2},
\]
so $\Phi$ is normal and completely positive.

By \prettyref{eq:6-3}, the ranges of $U_{0}$ and $U_{2}$ are orthogonal
and 
\[
L^{2}\left(\mu_{p}\right)=U_{0}L^{2}\left(\mu_{p}\right)\oplus U_{2}L^{2}\left(\mu_{p}\right).
\]
Hence 
\[
\left\Vert \Phi\left(T\right)\right\Vert =\max\left\{ p\left\Vert U_{0}TU^{*}_{0}\right\Vert ,\left(1-p\right)\left\Vert U_{2}TU^{*}_{2}\right\Vert \right\} =\max\left\{ p,1-p\right\} \left\Vert T\right\Vert ,
\]
which gives \prettyref{eq:6-6}.

By \prettyref{thm:6-3}, $K_{\infty}=P_{\phi}+\Phi\left(K_{\infty}\right)$,
$K_{0}=P_{\phi}$, and $K_{m+1}=P_{\phi}+\Phi\left(K_{m}\right)$.
It follows by induction that 
\[
K_{m}=\sum^{m}_{n=0}\Phi^{n}\left(P_{\phi}\right).
\]
Since $\left\Vert \Phi\right\Vert <1$, the series on the right converges
in operator norm. Passing to the limit as $m\to\infty$ and using
$K_{m}\to K_{\infty}$ yields \prettyref{eq:6-7}. Finally, \prettyref{eq:6-8}
is the Neumann series form of \prettyref{eq:6-7}. 
\end{proof}

\section{Resolvent of $K_{\infty}$}\label{sec:7}

We now turn from the operator identities of \prettyref{sec:6} to
scalar information extracted from the resolvent of $K_{\infty}$.
The block form from \prettyref{cor:6-4} turns the first-level self-similarity
into a nonlinear identity for the scalar resolvent function $m\left(z\right)$.
From this renormalization formula we then derive a recursion for the
rooted moments, the associated spectral measure at $\phi$, and a
scalar characterization of the top eigenvalue.

We begin by converting the block decomposition of $K_{\infty}$ into
an identity for the scalar resolvent at the root vector $\phi$.
\begin{thm}
\label{thm:7-1} Fix $0<p<1$ and let $K_{\infty}$ be the compact
positive self-adjoint operator on $L^{2}\left(\mu_{p}\right)$ constructed
in \prettyref{prop:5-1}. Write $\phi=1_{C}$ and define 
\[
m\left(z\right)=\left\langle \phi,\left(zI-K_{\infty}\right)^{-1}\phi\right\rangle .
\]
Then, for every $z\in\mathbb{C}$ with 
\[
\left|z\right|>\left\Vert K_{\infty}\right\Vert +1,
\]
one has 
\begin{equation}
m\left(z\right)=\frac{m\left(z/p\right)}{1-m\left(z/p\right)}+\frac{m\left(z/\left(1-p\right)\right)}{\left(1-m\left(z/p\right)\right)\left(1-m\left(z/p\right)-m\left(z/\left(1-p\right)\right)\right)}.\label{eq:7-1}
\end{equation}
\end{thm}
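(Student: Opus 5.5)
The plan is to obtain \eqref{eq:7-1} from the block decomposition of \prettyref{cor:6-4}, read as a rank-one perturbation of a block-diagonal operator, followed by a Sherman--Morrison computation.

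\emph{Setting up the rank-one form.} In the notation of the proof of \prettyref{cor:6-4}, put $b=(\sqrt{p}\,\phi,\sqrt{1-p}\,\phi)$, so that $Wb=\phi$ and $\|b\|=1$. Then
\[
W^{*}K_{\infty}W=D+P_{b},\qquad D=\begin{pmatrix}pK_{\infty} & 0\\ 0 & (1-p)K_{\infty}\end{pmatrix},\qquad P_{b}=\langle b,\cdot\rangle b .
\]
Since $W$ is unitary and $Wb=\phi$, we get
\[
m(z)=\langle b,(zI-D-P_{b})^{-1}b\rangle .
\]
Note that $\|D\|\le\|K_{\infty}\|<|z|$, so $zI-D$ is invertible. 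Set
\[
g(z)=\langle b,(zI-D)^{-1}b\rangle .
\]
Because $D$ is block diagonal, $g(z)$ splits as
\[
p\langle\phi,(zI-pK_{\infty})^{-1}\phi\rangle+(1-p)\langle\phi,(zI-(1-p)K_{\infty})^{-1}\phi\rangle .
\]
Rescaling each term, $p(zI-pK_{\infty})^{-1}=(z/p\,I-K_{\infty})^{-1}$. This gives
\[
g(z)=a+c,\qquad a=m(z/p),\quad c=m\bigl(z/(1-p)\bigr).
\]

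\emph{Sherman--Morrison.} For the rank-one perturbation, once $1-g\neq0$, the resolvent identity gives
\[
(zI-D-P_{b})^{-1}b=\frac{(zI-D)^{-1}b}{1-g}.
\]
Pairing with $b$ then yields
\[
m(z)=\frac{a+c}{1-a-c}.
\]
It remains to check that this agrees with the asymmetric form stated in \eqref{eq:7-1}. Putting the right-hand side of \eqref{eq:7-1} over the common denominator $(1-a)(1-a-c)$, the numerator is $a(1-a-c)+c=(a+c)(1-a)$. So the right-hand side equals $(a+c)/(1-a-c)$, as required. Equivalently, \eqref{eq:7-1} is what one gets by peeling off the rank-one term in two steps, first along the $C_{0}$ block and then along the $C_{2}$ block (a Schur complement computation). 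I would present the one-step version and then record the algebraic identity.

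\emph{Main obstacle: the admissible region of $z$.} We must ensure that $z/p$ and $z/(1-p)$ lie in the resolvent set and that the denominators $1-a$ and $1-a-c$ do not vanish. Write $r=|z|-\|K_{\infty}\|>1$.

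\begin{itemize}
\item Since $|z/p|\ge|z|$ and $|z/(1-p)|\ge|z|$, both points lie in the resolvent set.
\item The Neumann bound $\|(wI-T)^{-1}\|\le(|w|-\|T\|)^{-1}$ gives
\[
|a|\le\frac{p}{|z|-p\|K_{\infty}\|}\le\frac{p}{r}<p,\qquad |c|<1-p .
\]
\end{itemize}

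Hence $|a+c|<1$ and $|a|<1$, so $1-a\neq0$ and $1-a-c\neq0$. This justifies the hypothesis $|z|>\|K_{\infty}\|+1$ and completes the argument.
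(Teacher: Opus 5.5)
Your proof is correct, but it reaches \eqref{eq:7-1} by a different and shorter route than the paper.

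The paper eliminates the two branches one after the other. It applies the rank-one resolvent identity to $A_{11}=zI-pK_{\infty}-pP_{\phi}$, passes to the Schur complement $S=A_{22}-BA_{11}^{-1}B$, and applies the rank-one identity a second time. It then assembles $m(z)=pg_{p}(z)+(1-p)s(z)\bigl(1+pg_{p}(z)\bigr)^{2}$ from the block inverse formula. The asymmetric shape of \eqref{eq:7-1} records the fact that the $C_{0}$ block was eliminated first.

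You instead treat the whole first-level coupling as a single rank-one perturbation $P_{b}$ of the block-diagonal $D$. This is the splitting $A=D+P_{b}$ that the paper only uses later, in the proof of \prettyref{thm:7-6}. One Sherman--Morrison step then gives the symmetric closed form $m(z)=\frac{a+c}{1-a-c}$, with $a=m(z/p)$ and $c=m(z/(1-p))$. Your check $a(1-a-c)+c=(a+c)(1-a)$ correctly converts this into \eqref{eq:7-1}.

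Your route buys transparency:
\begin{itemize}
\item the identity is visibly symmetric under $p\leftrightarrow1-p$;
\item it can be written as $\bigl(1+m(z)\bigr)\bigl(1-m(z/p)-m(z/(1-p))\bigr)=1$;
\item the moment recursion of \prettyref{prop:7-2} can be read off from $m=\sum_{k\ge1}(a+c)^{k}$;
\item the eigenvalue criterion $m(\lambda/p)+m(\lambda/(1-p))=1$ of \prettyref{thm:7-6} is exactly the vanishing of the denominator.
\end{itemize}
The paper's route produces the branch-level quantities $g_{p}$ and $s$ along the way, but they are not needed for the scalar identity.

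Your handling of the admissible $z$ is sound and more explicit than the paper's. In fact the $+1$ in the hypothesis is only a convenience. First, $1-a-c\neq0$ is automatic once $zI-D$ and $zI-D-P_{b}$ are invertible. Second, $1-a\neq0$ follows the same way from invertibility of $zI-pK_{\infty}$ and of $zI-pK_{\infty}-pP_{\phi}=zI-U_{0}^{*}K_{\infty}U_{0}$. So both conditions hold for all $|z|>\|K_{\infty}\|$.
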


\begin{proof}
Let 
\[
W:L^{2}\left(\mu_{p}\right)\oplus L^{2}\left(\mu_{p}\right)\to L^{2}\left(\mu_{p}\right),\qquad W\left(f,g\right)=U_{0}f+U_{2}g,
\]
and set 
\[
b=\left(\sqrt{p}\phi,\sqrt{1-p}\phi\right).
\]
By \prettyref{cor:6-4}, $W$ is unitary, 
\[
W^{*}K_{\infty}W=\begin{pmatrix}pK_{\infty}+pP_{\phi} & \sqrt{p\left(1-p\right)}P_{\phi}\\[1.2ex]
\sqrt{p\left(1-p\right)}P_{\phi} & \left(1-p\right)K_{\infty}+\left(1-p\right)P_{\phi}
\end{pmatrix},
\]
and $Wb=\phi$. Hence 
\[
m\left(z\right)=\left\langle b,\left(zI-A\right)^{-1}b\right\rangle ,
\]
where 
\[
A:=\begin{pmatrix}pK_{\infty}+pP_{\phi} & \sqrt{p\left(1-p\right)}P_{\phi}\\[1.2ex]
\sqrt{p\left(1-p\right)}P_{\phi} & \left(1-p\right)K_{\infty}+\left(1-p\right)P_{\phi}
\end{pmatrix}.
\]

Write 
\[
c=\sqrt{p\left(1-p\right)},\qquad P=P_{\phi},
\]
and 
\[
zI-A=\begin{pmatrix}A_{11} & B\\
B & A_{22}
\end{pmatrix},
\]
where 
\begin{equation}
A_{11}=zI-pK_{\infty}-pP,\qquad A_{22}=zI-\left(1-p\right)K_{\infty}-\left(1-p\right)P,\qquad B=-cP.\label{eq:7-2}
\end{equation}

Assume now that $\left|z\right|>\left\Vert K_{\infty}\right\Vert +1$.
Since $A=W^{*}K_{\infty}W$, we have $\left\Vert A\right\Vert =\left\Vert K_{\infty}\right\Vert $,
so $zI-A$ is invertible. Also, 
\[
\left\Vert pK_{\infty}+pP\right\Vert \le p\left\Vert K_{\infty}\right\Vert +p<\left|z\right|,
\]
and similarly 
\[
\left\Vert \left(1-p\right)K_{\infty}+\left(1-p\right)P\right\Vert \le\left(1-p\right)\left\Vert K_{\infty}\right\Vert +\left(1-p\right)<\left|z\right|,
\]
so both $A_{11}$ and $A_{22}$ are invertible. Hence the Schur complement
\[
S=A_{22}-BA^{-1}_{11}B
\]
is invertible.

Next set 
\begin{equation}
R_{p}\left(z\right)=\left(zI-pK_{\infty}\right)^{-1},\qquad R_{1-p}\left(z\right)=\left(zI-\left(1-p\right)K_{\infty}\right)^{-1}.\label{eq:7-3}
\end{equation}
These are well defined because 
\[
\left\Vert pK_{\infty}\right\Vert \le p\left\Vert K_{\infty}\right\Vert <\left|z\right|,\qquad\left\Vert \left(1-p\right)K_{\infty}\right\Vert \le\left(1-p\right)\left\Vert K_{\infty}\right\Vert <\left|z\right|.
\]
Moreover, 
\begin{equation}
R_{p}\left(z\right)=\frac{1}{p}\left(\frac{z}{p}I-K_{\infty}\right)^{-1},\qquad R_{1-p}\left(z\right)=\frac{1}{1-p}\left(\frac{z}{1-p}I-K_{\infty}\right)^{-1},\label{eq:7-4}
\end{equation}
so 
\begin{equation}
\left\langle \phi,R_{p}\left(z\right)\phi\right\rangle =\frac{1}{p}m\left(z/p\right),\qquad\left\langle \phi,R_{1-p}\left(z\right)\phi\right\rangle =\frac{1}{1-p}m\left(z/\left(1-p\right)\right).\label{eq:7-5}
\end{equation}

Apply the rank-one resolvent identity to 
\[
A_{11}=zI-pK_{\infty}-pP.
\]
This gives 
\[
A^{-1}_{11}=R_{p}\left(z\right)+pR_{p}\left(z\right)\phi\frac{1}{1-p\left\langle \phi,R_{p}\left(z\right)\phi\right\rangle }\left\langle \phi,R_{p}\left(z\right)\cdot\right\rangle .
\]
Hence 
\[
g_{p}\left(z\right):=\left\langle \phi,A^{-1}_{11}\phi\right\rangle =\frac{\left\langle \phi,R_{p}\left(z\right)\phi\right\rangle }{1-p\left\langle \phi,R_{p}\left(z\right)\phi\right\rangle }.
\]
Using \prettyref{eq:7-5}, 
\begin{equation}
g_{p}\left(z\right)=\frac{\frac{1}{p}m\left(z/p\right)}{1-m\left(z/p\right)},\qquad pg_{p}\left(z\right)=\frac{m\left(z/p\right)}{1-m\left(z/p\right)}.\label{eq:7-6}
\end{equation}

Since $PA^{-1}_{11}P=g_{p}\left(z\right)P$, \prettyref{eq:7-2} gives
\[
S=A_{22}-BA^{-1}_{11}B=zI-\left(1-p\right)K_{\infty}-\beta\left(z\right)P,
\]
where 
\[
\beta\left(z\right)=\left(1-p\right)+p\left(1-p\right)g_{p}\left(z\right)=\left(1-p\right)\left(1+pg_{p}\left(z\right)\right).
\]
Applying the rank-one resolvent identity again, 
\[
s\left(z\right):=\left\langle \phi,S^{-1}\phi\right\rangle =\frac{\left\langle \phi,R_{1-p}\left(z\right)\phi\right\rangle }{1-\beta\left(z\right)\left\langle \phi,R_{1-p}\left(z\right)\phi\right\rangle }.
\]
Using \prettyref{eq:7-5} and \prettyref{eq:7-6}, 
\begin{equation}
\left(1-p\right)s\left(z\right)=\frac{m\left(z/\left(1-p\right)\right)}{1-\left(1+pg_{p}\left(z\right)\right)m\left(z/\left(1-p\right)\right)}.\label{eq:7-7}
\end{equation}

We now evaluate $\left\langle b,\left(zI-A\right)^{-1}b\right\rangle $
using the block inverse formula. The $(2,2)$-block of $\left(zI-A\right)^{-1}$
is $S^{-1}$, and the $(1,2)$-block is 
\[
-A^{-1}_{11}BS^{-1}=cA^{-1}_{11}PS^{-1}.
\]
A direct computation gives 
\begin{equation}
m\left(z\right)=pg_{p}\left(z\right)+\left(1-p\right)s\left(z\right)\left(1+pg_{p}\left(z\right)\right)^{2}.\label{eq:7-8}
\end{equation}
Now \prettyref{eq:7-6} gives 
\[
pg_{p}\left(z\right)=\frac{m\left(z/p\right)}{1-m\left(z/p\right)},\qquad1+pg_{p}\left(z\right)=\frac{1}{1-m\left(z/p\right)}.
\]
Substituting these and \prettyref{eq:7-7} into \prettyref{eq:7-8},
we obtain 
\[
m\left(z\right)=\frac{m\left(z/p\right)}{1-m\left(z/p\right)}+\frac{m\left(z/\left(1-p\right)\right)}{\left(1-m\left(z/p\right)\right)^{2}}\frac{1}{1-\frac{m\left(z/\left(1-p\right)\right)}{1-m\left(z/p\right)}}.
\]
This simplifies to \prettyref{eq:7-1}. 
\end{proof}

Once the resolvent satisfies a renormalization identity, its expansion
at infinity yields recursive information on the moments of the rooted
spectral measure.
\begin{prop}
\label{prop:7-2} For $n\ge0$, set 
\[
\mu_{n}=\left\langle \phi,K^{n}_{\infty}\phi\right\rangle .
\]
Then, for $\left|z\right|>\left\Vert K_{\infty}\right\Vert $, 
\[
m\left(z\right)=\sum^{\infty}_{n=0}\frac{\mu_{n}}{z^{n+1}}.
\]
Moreover, \prettyref{thm:7-1} determines the sequence $\left(\mu_{n}\right)_{n\ge0}$
recursively: for each $n\ge1$ there is a polynomial 
\[
P_{n}\in\mathbb{Q}\left[p\right]\left[X_{0},\dots,X_{n-1}\right]
\]
such that 
\[
\left(1-p^{n+1}-\left(1-p\right)^{n+1}\right)\mu_{n}=P_{n}\left(\mu_{0},\dots,\mu_{n-1}\right).
\]
In particular, $\mu_{n}$ is uniquely determined by $\mu_{0},\dots,\mu_{n-1}$,
since 
\[
1-p^{n+1}-\left(1-p\right)^{n+1}>0
\]
for every $n\ge1$.

The first moments are 
\begin{align*}
\mu_{0} & =1,\\
\mu_{1} & =\frac{1}{2p\left(1-p\right)}=\frac{1}{1-q},\qquad q=p^{2}+\left(1-p\right)^{2},\\
\mu_{2} & =\frac{p^{2}-p+1}{3p^{2}\left(1-p\right)^{2}},\\
\mu_{3} & =\frac{12p^{4}-24p^{3}+38p^{2}-26p+11}{24p^{3}\left(1-p\right)^{3}\left(p^{2}-p+2\right)}.
\end{align*}
\end{prop}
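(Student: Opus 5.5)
The plan is to expand the resolvent in a Neumann series, substitute these series into the renormalization identity \eqref{eq:7-1}, and compare coefficients of powers of $1/z$.

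\textbf{Moment expansion.} For $|z|>\|K_\infty\|$ the Neumann series gives
\[
(zI-K_\infty)^{-1}=\sum_{n\ge0}z^{-(n+1)}K_\infty^{n}
\]
in operator norm. Pairing with $\phi$ yields $m(z)=\sum_{n\ge0}\mu_n z^{-n-1}$, a convergent power series in $w=1/z$. It follows that
\[
a(z):=m(z/p)=\sum_{n\ge0} p^{n+1}\mu_n z^{-n-1},\qquad b(z):=m(z/(1-p))=\sum_{n\ge0}(1-p)^{n+1}\mu_n z^{-n-1},
\]
both convergent when $|z|>\|K_\infty\|$. In particular $a$ and $b$ are $O(1/z)$.

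\textbf{Polynomial form of \eqref{eq:7-1}.} \prettyref{thm:7-1} gives \eqref{eq:7-1} for $|z|>\|K_\infty\|+1$. In that region the denominators $1-a$ and $1-a-b$ are nonzero, since this is already implicit in the proof via invertibility of $A_{11}$ and of the Schur complement. Clearing denominators gives the polynomial identity
\[
m\,(1-a)(1-a-b)=a(1-a-b)+b .
\]
This is an identity between convergent power series in $w$ on a punctured neighborhood of $0$. By uniqueness of power series coefficients, the coefficients of each $w^{n+1}$ agree on both sides.

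\textbf{Extracting the recursion.} Fix $n\ge1$ and look at the coefficient of $w^{n+1}$.
\begin{itemize}
\item On the left, $(1-a)(1-a-b)=1+O(w)$, so $\mu_n$ enters only through the term $m\cdot1$, with coefficient $1$. Every other contribution is a product involving coefficients of $m$ of order at most $w^{n}$ together with coefficients of $a,b$, and these carry only $\mu_0,\dots,\mu_{n-1}$.
\item On the right, the linear terms $a+b$ contribute $(p^{n+1}+(1-p)^{n+1})\mu_n$. The quadratic terms $-a^2-ab$ are products of two $O(w)$ series, so they involve only moments of index at most $n-1$.
\end{itemize}
Moving $\mu_n$ to one side gives
\[
(1-p^{n+1}-(1-p)^{n+1})\mu_n=P_n(\mu_0,\dots,\mu_{n-1}),
\]
with $P_n$ having coefficients that are polynomials in $p$ with rational coefficients. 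For $n\ge1$ and $0<p<1$,
\[
p^{n+1}+(1-p)^{n+1}<p+(1-p)=1,
\]
so the leading factor is positive and the recursion determines $\mu_n$ uniquely from $\mu_0,\dots,\mu_{n-1}$.

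\textbf{Initial values and first moments.}
\begin{itemize}
\item $\mu_0=\|\phi\|^2=1$.
\item $\mu_1=\langle\phi,K_\infty\phi\rangle=1/(1-q)=1/(2p(1-p))$ by \prettyref{prop:5-2}. The $w^2$ coefficient gives the same value, $(1-p^2-(1-p)^2)\mu_1=\mu_0=1$, which serves as a consistency check.
\item $\mu_2$ comes from the $w^3$ coefficient. Alternatively it can be cross-checked directly as $\|K_\infty\phi\|^2$, using the explicit form $K_\infty\phi=\sum_u\mu(u)1_{C_u}$.
\item $\mu_3$ comes from the $w^4$ coefficient.
\end{itemize}

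The conceptual steps are routine. The main obstacle I expect is the bookkeeping of the products $m a$, $m a^2$, $m a b$, $a^2$, $ab$ up to order $w^4$, and then simplifying $\mu_3$ to the stated rational function. For this I would use the factorization
\[
1-p^4-(1-p)^4=2p(1-p)(p^2-p+2),
\]
which accounts for the factor $p^2-p+2$ in the denominator of $\mu_3$. I would verify the final expression by evaluating at $p=\tfrac12$ against \prettyref{cor:3-6}: there $\phi$ is an eigenvector of $K_\infty$ with eigenvalue $2$, so $\mu_n=2^n$. At $p=\tfrac12$ the stated formulas give $\mu_2=4$ and $\mu_3=8$, as required.
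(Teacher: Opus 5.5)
Your proposal is correct and follows essentially the paper's proof: expand $m$ by the Neumann series, substitute into \eqref{eq:7-1}, and compare coefficients, with $\mu_n$ entering only through $a+b$. Clearing denominators instead of expanding them geometrically is a cosmetic difference. For the bookkeeping you anticipate, note that $a(1-a-b)+b=(a+b)(1-a)$, so cancelling the unit $1-a$ reduces your identity to $m(1-a-b)=a+b$, i.e.\ $\bigl(1-p^{n+1}-(1-p)^{n+1}\bigr)\mu_n=\sum_{i=0}^{n-1}\bigl(p^{n-i}+(1-p)^{n-i}\bigr)\mu_i\mu_{n-1-i}$, from which the stated $\mu_2$ and $\mu_3$ follow in a couple of lines.
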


\begin{proof}
Since $\left|z\right|>\left\Vert K_{\infty}\right\Vert $, the Neumann
series gives 
\[
\left(zI-K_{\infty}\right)^{-1}=\frac{1}{z}\sum^{\infty}_{n=0}\frac{K^{n}_{\infty}}{z^{n}}
\]
with convergence in operator norm. Pairing against $\phi$ yields
\[
m\left(z\right)=\left\langle \phi,\left(zI-K_{\infty}\right)^{-1}\phi\right\rangle =\sum^{\infty}_{n=0}\frac{\mu_{n}}{z^{n+1}}.
\]

Set 
\[
a\left(z\right)=m\left(z/p\right),\qquad b\left(z\right)=m\left(z/\left(1-p\right)\right).
\]
Then 
\[
a\left(z\right)=\sum^{\infty}_{n=0}\mu_{n}p^{n+1}z^{-\left(n+1\right)},\qquad b\left(z\right)=\sum^{\infty}_{n=0}\mu_{n}\left(1-p\right)^{n+1}z^{-\left(n+1\right)}.
\]
By \prettyref{thm:7-1}, 
\[
m\left(z\right)=\frac{a\left(z\right)}{1-a\left(z\right)}+\frac{b\left(z\right)}{\left(1-a\left(z\right)\right)\left(1-a\left(z\right)-b\left(z\right)\right)}.
\]
Since $a\left(z\right),b\left(z\right)=O\left(z^{-1}\right)$ as $\left|z\right|\to\infty$,
each denominator admits a convergent geometric expansion in powers
of $z^{-1}$. Therefore the coefficient of $z^{-\left(n+1\right)}$
on the right-hand side is a polynomial expression in $\mu_{0},\dots,\mu_{n}$
with coefficients in $\mathbb{Q}\left[p\right]$.

The only contribution involving $\mu_{n}$ linearly comes from the
first-order terms in the geometric expansions, namely from 
\[
a\left(z\right)+b\left(z\right).
\]
Hence the coefficient of $z^{-\left(n+1\right)}$ on the right-hand
side is 
\[
\mu_{n}p^{n+1}+\mu_{n}\left(1-p\right)^{n+1}+Q_{n}\left(\mu_{0},\dots,\mu_{n-1}\right),
\]
where 
\[
Q_{n}\in\mathbb{Q}\left[p\right]\left[X_{0},\dots,X_{n-1}\right].
\]
Comparing with the coefficient $\mu_{n}$ on the left gives 
\[
\mu_{n}=\mu_{n}p^{n+1}+\mu_{n}\left(1-p\right)^{n+1}+Q_{n}\left(\mu_{0},\dots,\mu_{n-1}\right),
\]
that is, 
\[
\left(1-p^{n+1}-\left(1-p\right)^{n+1}\right)\mu_{n}=Q_{n}\left(\mu_{0},\dots,\mu_{n-1}\right).
\]
This is the stated recursion, after renaming $Q_{n}$ as $P_{n}$.
Since $0<p<1$, one has 
\[
0<p^{n+1}+\left(1-p\right)^{n+1}<1
\]
for every $n\ge1$, so the coefficient of $\mu_{n}$ is strictly positive.

We now compute the first terms. Write 
\[
m\left(z\right)=\frac{1}{z}+\frac{\mu_{1}}{z^{2}}+\frac{\mu_{2}}{z^{3}}+\frac{\mu_{3}}{z^{4}}+O\left(z^{-5}\right).
\]
Then 
\[
m\left(z/p\right)=\frac{p}{z}+\frac{\mu_{1}p^{2}}{z^{2}}+\frac{\mu_{2}p^{3}}{z^{3}}+\frac{\mu_{3}p^{4}}{z^{4}}+O\left(z^{-5}\right),
\]
and 
\[
m\left(z/\left(1-p\right)\right)=\frac{1-p}{z}+\frac{\mu_{1}\left(1-p\right)^{2}}{z^{2}}+\frac{\mu_{2}\left(1-p\right)^{3}}{z^{3}}+\frac{\mu_{3}\left(1-p\right)^{4}}{z^{4}}+O\left(z^{-5}\right).
\]
Substituting into the renormalization identity and comparing coefficients
of $z^{-2}$, $z^{-3}$, and $z^{-4}$ gives 
\[
\mu_{1}=\frac{1}{2p\left(1-p\right)},
\]
\[
\mu_{2}=\frac{p^{2}-p+1}{3p^{2}\left(1-p\right)^{2}},
\]
and 
\[
\mu_{3}=\frac{12p^{4}-24p^{3}+38p^{2}-26p+11}{24p^{3}\left(1-p\right)^{3}\left(p^{2}-p+2\right)}.
\]
Finally, the identity 
\[
\mu_{1}=\frac{1}{1-q}
\]
follows from $1-q=2p\left(1-p\right)$, in agreement with \prettyref{prop:5-1}. 
\end{proof}

The first few terms give immediate checks against formulas already
obtained earlier and against the symmetric case.
\begin{cor}
\label{cor:7-3} With the notation of \prettyref{prop:7-2}, the following
hold. 
\begin{enumerate}
\item The first moment agrees with \prettyref{prop:5-1}: 
\[
\mu_{1}=\left\langle \phi,K_{\infty}\phi\right\rangle =\frac{1}{1-q},\qquad q=p^{2}+\left(1-p\right)^{2}.
\]
\item In the symmetric case $p=\frac{1}{2}$, 
\[
\mu_{1}=2,\qquad\mu_{2}=4,\qquad\mu_{3}=8.
\]
\end{enumerate}
\end{cor}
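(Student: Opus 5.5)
Part (1) is immediate. \prettyref{prop:7-2} gives $\mu_{1}=\frac{1}{2p(1-p)}$, and $1-q=1-p^{2}-(1-p)^{2}=2p(1-p)$, so $\mu_{1}=\frac{1}{1-q}$. To tie this to the earlier analysis, I will also compute $\mu_{1}=\langle\phi,K_{\infty}\phi\rangle$ directly. By \prettyref{prop:5-1} and \prettyref{cor:5-3} (equivalently \prettyref{prop:5-2}(1)),
\[
\langle\phi,K_{\infty}\phi\rangle=\sum_{n\ge0}\sum_{|w|=n}\mu_{p}(C_{w})^{2}=\sum_{n\ge0}q^{n}=\frac{1}{1-q},
\]
which agrees with the value coming from the renormalization recursion.

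For part (2) I will not substitute $p=\frac12$ into the general formulas. The spectral description in \prettyref{cor:3-6} is cleaner. The vector $\phi$ spans $\mathcal{D}_{0}$, and $K_{\infty}\phi=2\phi$ by \eqref{eq:3-7}. Since $\|\phi\|=1$, this gives $\mu_{n}=\langle\phi,K_{\infty}^{n}\phi\rangle=2^{n}$ for every $n$, and in particular $\mu_{1}=2$, $\mu_{2}=4$, $\mu_{3}=8$.

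As a consistency check I will also substitute $p=\frac12$ into the closed forms of \prettyref{prop:7-2}:
\[
\mu_{1}=\frac{1}{2\cdot\frac14}=2,\qquad \mu_{2}=\frac{\frac34}{3\cdot\frac1{16}}=4,
\]
and for $\mu_{3}$ the numerator is $12/16-3+38/4-13+11=\frac{21}{4}$, while the denominator is $24\cdot\frac1{64}\cdot\frac74=\frac{21}{32}$, so the ratio is $8$.

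The only point that needs care is the $\mu_{3}$ arithmetic, so I will present the spectral argument as the proof and the substitution as a corroborating check.
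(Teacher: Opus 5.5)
Your proposal is correct. Part (1) matches the paper's argument: you take $\mu_1=\frac{1}{2p(1-p)}$ from Proposition~\ref{prop:7-2} and use $1-q=2p(1-p)$. Your extra direct evaluation $\langle\phi,K_\infty\phi\rangle=\sum_{n\ge0}q^n$ is where the value actually appears in the paper, namely Proposition~\ref{prop:5-2}(1) and Corollary~\ref{cor:5-3}. So citing those is more accurate than the bare reference to Proposition~\ref{prop:5-1} in the statement.

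For part (2) your primary route differs from the paper's. The paper proves (2) only by substituting $p=\frac12$ into the closed forms of Proposition~\ref{prop:7-2}. You instead use $K_\infty\phi=2\phi$ from Corollary~\ref{cor:3-6}, which is the argument the paper reserves for Corollary~\ref{cor:7-5}.

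Your route does not depend on the renormalization identity or on the coefficient extraction in Proposition~\ref{prop:7-2}, which the paper only sketches. It also gives $\mu_n=2^n$ for every $n$. The paper's substitution, by contrast, is what makes the corollary a consistency check on the recursion-derived formulas. Since you carry out the substitution as well, and your arithmetic $\frac{21/4}{21/32}=8$ is right, your write-up covers both purposes.

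One line is worth adding to your spectral argument. The operator in Corollary~\ref{cor:3-6} is the same $K_\infty$ as in Proposition~\ref{prop:5-1}, because at $p=\frac12$ one has $D_n=2^{-n}I$ in Proposition~\ref{prop:4-3}.
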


\begin{proof}
For $(1)$, \prettyref{prop:7-2} gives 
\[
\mu_{1}=\frac{1}{2p\left(1-p\right)}.
\]
Since 
\[
1-q=1-p^{2}-\left(1-p\right)^{2}=2p\left(1-p\right),
\]
it follows that 
\[
\mu_{1}=\frac{1}{1-q}.
\]
This is exactly the identity already obtained in \prettyref{prop:5-1}.

For $(2)$, substitute $p=\frac{1}{2}$ into the formulas of \prettyref{prop:7-2}.
Then 
\begin{align*}
\mu_{1} & =\frac{1}{2\cdot\frac{1}{2}\cdot\frac{1}{2}}=2,\qquad\mu_{2}=\frac{\frac{1}{4}-\frac{1}{2}+1}{3\cdot\frac{1}{4}\cdot\frac{1}{4}}=4,\\
\mu_{3} & =\frac{12\cdot\frac{1}{16}-24\cdot\frac{1}{8}+38\cdot\frac{1}{4}-26\cdot\frac{1}{2}+11}{24\cdot\frac{1}{8}\cdot\frac{1}{8}\cdot\left(\frac{1}{4}-\frac{1}{2}+2\right)}=8.
\end{align*}
\end{proof}

The moment sequence comes from a distinguished spectral measure obtained
by evaluating the spectral theorem at the root vector $\phi$.
\begin{cor}
\label{cor:7-4} There is a unique finite positive Borel measure $\nu_{\phi}$
on $\left[0,\left\Vert K_{\infty}\right\Vert \right]$ such that 
\[
m\left(z\right)=\left\langle \phi,\left(zI-K_{\infty}\right)^{-1}\phi\right\rangle =\int_{\left[0,\left\Vert K_{\infty}\right\Vert \right]}\frac{1}{z-\lambda}d\nu_{\phi}\left(\lambda\right)
\]
for every $z\in\mathbb{C}\setminus\left[0,\left\Vert K_{\infty}\right\Vert \right]$.
Moreover, 
\[
\mu_{n}=\left\langle \phi,K^{n}_{\infty}\phi\right\rangle =\int_{\left[0,\left\Vert K_{\infty}\right\Vert \right]}\lambda^{n}d\nu_{\phi}\left(\lambda\right)\qquad\left(n\ge0\right),
\]
so \prettyref{prop:7-2} determines $\nu_{\phi}$ uniquely. 
\end{cor}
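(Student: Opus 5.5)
The plan is to apply the spectral theorem to the compact positive self-adjoint operator $K_{\infty}$ from \prettyref{prop:5-1} and then use compactness of the spectrum to pass from moments to the measure.

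First I define $\nu_{\phi}$ as the scalar spectral measure of $K_{\infty}$ at $\phi$. If $E$ is the projection-valued spectral measure of $K_{\infty}$, set $\nu_{\phi}(\Delta)=\langle\phi,E(\Delta)\phi\rangle$. This is a finite positive Borel measure of total mass $\|\phi\|^{2}=\mu_{p}(C)=1$. Since $K_{\infty}$ is positive, $\sigma(K_{\infty})\subset[0,\|K_{\infty}\|]$, so $\nu_{\phi}$ is supported there. Concretely, compactness lets me write $K_{\infty}=\sum_{j}\lambda_{j}P_{j}$ with eigenvalues $\lambda_{j}\ge0$ and orthogonal eigenprojections $P_{j}$, with $P_{0}$ the kernel projection. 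Then
\[
\nu_{\phi}=\sum_{j}\Vert P_{j}\phi\Vert^{2}\delta_{\lambda_{j}},
\]
which avoids any abstract machinery.

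Next, the functional calculus gives $\langle\phi,f(K_{\infty})\phi\rangle=\int f\,d\nu_{\phi}$ for every bounded Borel $f$ on $[0,\|K_{\infty}\|]$. I apply this twice:
\begin{itemize}
\item with $f(\lambda)=\lambda^{n}$, which yields the moment identity;
\item with $f(\lambda)=(z-\lambda)^{-1}$ for $z\notin[0,\|K_{\infty}\|]$, which is bounded and continuous on the spectrum and has $f(K_{\infty})=(zI-K_{\infty})^{-1}$, so it yields the Stieltjes representation of $m(z)$.
\end{itemize}
These formulas also show that $m$ extends holomorphically from $|z|>\|K_{\infty}\|$ to the full complement of $[0,\|K_{\infty}\|]$.

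For uniqueness, suppose $\nu$ is any finite positive measure on $[0,\|K_{\infty}\|]$ satisfying either representation. Its moments are then $\mu_{n}$: in the resolvent case, expand $\int(z-\lambda)^{-1}d\nu$ in powers of $z^{-1}$ for large $|z|$ and compare coefficients with \prettyref{prop:7-2}. Two measures on a compact interval with equal moments agree on all polynomials. By Weierstrass density in $C([0,\|K_{\infty}\|])$ they then agree on all continuous functions, and the Riesz representation theorem gives $\nu=\nu_{\phi}$.

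Finally, \prettyref{prop:7-2} shows the recursion determines every $\mu_{n}$ from $\mu_{0}=1$. Hence $\nu_{\phi}$ is determined by the renormalization identity alone. The only step needing any care is this uniqueness argument, namely that the Hausdorff moment problem on a compact interval is determinate. Weierstrass approximation disposes of it, so I expect no real obstacle.
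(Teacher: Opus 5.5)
Your proposal is correct and follows essentially the same route as the paper. You define $\nu_{\phi}$ as the scalar spectral measure of $K_{\infty}$ at $\phi$, apply the functional calculus to $\lambda^{n}$ and $(z-\lambda)^{-1}$, and obtain uniqueness from determinacy of the moment problem on a compact interval; your explicit step (expanding $\int(z-\lambda)^{-1}d\nu$ at infinity to recover the moments, then using Weierstrass approximation and the Riesz representation theorem) simply spells out what the paper cites as Hausdorff determinacy.
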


\begin{proof}
Since $K_{\infty}$ is compact, positive, and self-adjoint by \prettyref{prop:5-1},
the spectral theorem gives a unique finite positive Borel measure
$\nu_{\phi}$ supported on the spectrum of $K_{\infty}$ such that
\[
\left\langle \phi,f\left(K_{\infty}\right)\phi\right\rangle =\int f\left(\lambda\right)d\nu_{\phi}\left(\lambda\right)
\]
for every bounded Borel function $f$ on $\sigma\left(K_{\infty}\right)$.
Taking $f\left(\lambda\right)=\frac{1}{z-\lambda}$ gives the stated
representation of $m\left(z\right)$, and taking $f\left(\lambda\right)=\lambda^{n}$
gives $\mu_{n}=\int\lambda^{n}\,d\nu_{\phi}\left(\lambda\right)$. 

Since $\nu_{\phi}$ is supported on the compact interval $\left[0,\left\Vert K_{\infty}\right\Vert \right]$,
the Hausdorff moment problem on a compact interval is determinate.
Hence the moment sequence $\left(\mu_{n}\right)_{n\ge0}$ determines
$\nu_{\phi}$ uniquely. By \prettyref{prop:7-2}, these moments are
recursively determined. 
\end{proof}

In the symmetric case, the rooted measure collapses to a single atom
because $\phi$ is already an eigenvector.
\begin{cor}
\label{cor:7-5} Assume $p=\frac{1}{2}$. Then $K_{\infty}\phi=2\phi$.
Consequently, 
\[
\mu_{n}=\left\langle \phi,K^{n}_{\infty}\phi\right\rangle =2^{n}\qquad\left(n\ge0\right),
\]
and the rooted spectral measure from \prettyref{cor:7-4} is 
\[
\nu_{\phi}=\delta_{2}.
\]
In particular, the values 
\[
\mu_{1}=2,\qquad\mu_{2}=4,\qquad\mu_{3}=8
\]
from \prettyref{cor:7-3} agree with the direct eigenvector computation
above. 
\end{cor}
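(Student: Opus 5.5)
The plan is to reduce everything to the single identity $K_{\infty}\phi=2\phi$ and then read off the moments and the spectral measure. There are two independent routes to the eigenvector identity, and I would record both. The first uses \prettyref{cor:3-6}: for $p=\frac{1}{2}$ we have $K_{\infty}=2d_{0}+\sum_{n\ge1}2^{-(n-1)}d_{n}$, and $\phi$ spans $\mathcal{D}_{0}=E_{0}L^{2}(\mu)$. Hence $d_{0}\phi=\phi$ and $d_{n}\phi=(E_{n}-E_{n-1})\phi=\phi-\phi=0$ for $n\ge1$, since $\phi$ is constant on every cylinder. This gives $K_{\infty}\phi=2\phi$. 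The second route serves as a cross-check. \prettyref{eq:3-1} gives $K_{m}\phi=(2-2^{-m})\phi$, and passing to the norm limit via \prettyref{cor:3-6} (or \prettyref{prop:5-1}) yields the same identity.

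Next, by induction, $K_{\infty}^{n}\phi=2^{n}\phi$. Since $\|\phi\|^{2}=\mu(C)=1$, this gives $\mu_{n}=\langle\phi,K_{\infty}^{n}\phi\rangle=2^{n}$ for all $n\ge0$.

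For the spectral measure, I would compute the resolvent directly: for $z\notin[0,\|K_{\infty}\|]$ we have $(zI-K_{\infty})^{-1}\phi=(z-2)^{-1}\phi$. Hence $m(z)=\frac{1}{z-2}=\int\frac{1}{z-\lambda}\,d\delta_{2}(\lambda)$, and $\|K_{\infty}\|=2$ by \prettyref{cor:3-6}, so $\delta_{2}$ is supported on $[0,\|K_{\infty}\|]$. There are two ways to conclude $\nu_{\phi}=\delta_{2}$. One is the uniqueness statement in \prettyref{cor:7-4}: $\delta_{2}$ has moments $2^{n}$, and Hausdorff determinacy identifies it with $\nu_{\phi}$. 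The other is that the spectral measure of an eigenvector is the point mass at its eigenvalue, weighted by $\|\phi\|^{2}=1$.

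Finally, setting $n=1,2,3$ gives $2,4,8$, which matches the values computed in \prettyref{cor:7-3}(2) from the recursion formulas of \prettyref{prop:7-2}. This is the stated consistency check.

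I do not expect a genuine obstacle. The only points needing care are the normalization $\|\phi\|=1$, which makes $\nu_{\phi}$ a probability measure, and the fact that $\phi$ lies in $\mathcal{D}_{0}$, which gives $d_{n}\phi=0$ for $n\ge1$.
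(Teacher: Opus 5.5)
Your proposal is correct and follows essentially the paper's argument. The paper specializes \prettyref{prop:4-3} to $p=\frac{1}{2}$ to get $K_{\infty}=\sum_{n\ge0}2^{-n}E_{n}$ in the norm limit, then uses $E_{n}\phi=\phi$ to obtain $K_{\infty}\phi=2\phi$, which is the same content as your $d_{n}$-decomposition and your $K_{m}\phi=(2-2^{-m})\phi$ routes; the moments and $\nu_{\phi}=\delta_{2}$ then follow as you say, with your resolvent and Hausdorff-determinacy arguments serving only as optional extra confirmations of the paper's one-line eigenvector remark.
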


\begin{proof}
Assume $p=\frac{1}{2}$. Then \prettyref{prop:4-3} reduces to $K_{m}=\sum^{m}_{n=0}2^{-n}E_{n}$.
Passing to the norm limit and using \prettyref{prop:5-1}, we get
$K_{\infty}=\sum^{\infty}_{n=0}2^{-n}E_{n}$ with convergence in operator
norm. Since $E_{n}\phi=\phi$ for every $n$, it follows that $K_{\infty}\phi=\sum^{\infty}_{n=0}2^{-n}\phi=2\phi$.
Therefore $K^{n}_{\infty}\phi=2^{n}\phi$ for every $n\ge0$, and
hence 
\[
\mu_{n}=\left\langle \phi,K^{n}_{\infty}\phi\right\rangle =2^{n}\left\langle \phi,\phi\right\rangle =2^{n}.
\]

Now let $\nu_{\phi}$ be the measure from \prettyref{cor:7-4}. Since
$\phi$ is an eigenvector of $K_{\infty}$ with eigenvalue $2$, the
spectral measure at $\phi$ is the point mass at $2$, so $\nu_{\phi}=\delta_{2}$.
The identities $\mu_{1}=2$, $\mu_{2}=4$, $\mu_{3}=8$ now follow
again, and agree with the values already obtained in \prettyref{cor:7-3}
by substituting $p=\frac{1}{2}$ into the formulas of \prettyref{prop:7-2}. 
\end{proof}

We close the section by using the block decomposition once more, now
to reduce the top-eigenvalue problem to a scalar equation involving
the rooted resolvent function.
\begin{thm}
\label{thm:7-6} Let 
\[
L=\lambda_{\max}\left(K_{\infty}\right),\qquad\alpha=\max\left\{ p,1-p\right\} ,
\]
and let 
\[
m\left(z\right)=\left\langle \phi,\left(zI-K_{\infty}\right)^{-1}\phi\right\rangle ,\qquad z\in\mathbb{C}\setminus\sigma\left(K_{\infty}\right).
\]
Then for every real number $\lambda>\alpha L$, the following are
equivalent:
\begin{enumerate}
\item $\lambda$ is an eigenvalue of $K_{\infty}$. 
\item One has 
\[
m\left(\lambda/p\right)+m\left(\lambda/\left(1-p\right)\right)=1.
\]
\end{enumerate}
Moreover, every eigenvalue $\lambda>\alpha L$ is simple. In particular,
$L$ is the largest real number $\lambda>\alpha L$ satisfying 
\[
m\left(\lambda/p\right)+m\left(\lambda/\left(1-p\right)\right)=1,
\]
and the top eigenvalue $L$ is simple.

\end{thm}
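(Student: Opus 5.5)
The plan is to work entirely in the first-level block picture of \prettyref{cor:6-4} and treat $K_{\infty}$ as a rank-one perturbation of a block-diagonal operator. With $W$ and $b=(\sqrt{p}\phi,\sqrt{1-p}\phi)$ as in the proof of \prettyref{cor:6-4}, one has
\[
W^{*}K_{\infty}W=\mathcal{D}+P_{b},\qquad\mathcal{D}=\begin{pmatrix}pK_{\infty} & 0\\
0 & \left(1-p\right)K_{\infty}
\end{pmatrix},
\]
where $\|b\|^{2}=p+(1-p)=1$, so $P_{b}$ is a rank-one orthogonal projection. Since $W$ is unitary, eigenvalues and their multiplicities for $K_{\infty}$ and for $\mathcal{D}+P_{b}$ coincide.

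The first step is to locate the spectrum of $\mathcal{D}$. Since $K_{\infty}\ge0$ is compact with $\|K_{\infty}\|=L$, we get $\sigma(\mathcal{D})\subset[0,\alpha L]$. Hence every real $\lambda>\alpha L$ lies in the resolvent set of $\mathcal{D}$. Using \prettyref{eq:7-4}–\prettyref{eq:7-5}, I compute
\[
\left\langle b,(\lambda-\mathcal{D})^{-1}b\right\rangle =p\cdot\tfrac{1}{p}m(\lambda/p)+(1-p)\cdot\tfrac{1}{1-p}m\bigl(\lambda/(1-p)\bigr)=m(\lambda/p)+m\bigl(\lambda/(1-p)\bigr).
\]
Both arguments satisfy $\lambda/p,\ \lambda/(1-p)\ge\lambda/\alpha>L$, so they lie outside $\sigma(K_{\infty})$ and $m$ is well defined there.

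The second step is the standard rank-one eigenvalue criterion. Suppose $(\mathcal{D}+P_{b})f=\lambda f$ with $f\neq0$. Then $(\lambda-\mathcal{D})f=\langle b,f\rangle b$, so
\[
f=\langle b,f\rangle(\lambda-\mathcal{D})^{-1}b.
\]
Here $\langle b,f\rangle\neq0$, since otherwise $f=0$. Pairing this identity with $b$ gives $1=\langle b,(\lambda-\mathcal{D})^{-1}b\rangle$, which is condition (2). Conversely, if (2) holds, set $f=(\lambda-\mathcal{D})^{-1}b\neq0$. Then $\langle b,f\rangle=1$, and a direct check gives $(\mathcal{D}+P_{b})f=\mathcal{D}f+b=\lambda f$. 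The displayed formula for $f$ also shows that every eigenvector for such $\lambda$ is a multiple of $(\lambda-\mathcal{D})^{-1}b$. This gives simplicity at once.

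Finally, for the statement about $L$, I use $\langle\phi,K_{\infty}\phi\rangle=1/(1-q)>0$, so $L>0$ and therefore $L>\alpha L$ because $\alpha<1$. Since $K_{\infty}$ is compact and positive, $L=\|K_{\infty}\|$ is an eigenvalue, and no eigenvalue exceeds $L$. Hence $L$ is the largest solution of (2) in $(\alpha L,\infty)$, and it is simple by the previous step. I do not expect a serious obstacle. The only points needing care are that $\lambda>\alpha L$ really puts $\lambda$ in the resolvent set of $\mathcal{D}$ (this uses $\|pK_{\infty}\|=pL$ and positivity), and the bookkeeping of the factors $p$ and $1-p$ when converting $\langle b,(\lambda-\mathcal{D})^{-1}b\rangle$ into values of $m$.
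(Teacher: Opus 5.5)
Your proposal is correct and is essentially the paper's own argument. The paper likewise writes $W^{*}K_{\infty}W=D+P_{b}$ with $\sigma(D)\subset[0,\alpha L]$, derives $\langle b,(\lambda I-D)^{-1}b\rangle=m(\lambda/p)+m(\lambda/(1-p))=1$ from the eigenvector equation, proves the converse with $x=(\lambda I-D)^{-1}b$, and obtains simplicity because every eigenvector is a multiple of $(\lambda I-D)^{-1}b$. Your extra checks are small but welcome tightenings of points the paper passes over quickly: that $\lambda/p,\lambda/(1-p)>L$, so $m$ is defined there, and that $L>0$ via $\langle\phi,K_{\infty}\phi\rangle=1/(1-q)$.
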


\begin{proof}
This is the standard rank-one perturbation (Birman-Schwinger) criterion,
specialized to the decomposition $A=D+P_{b}$ below; see, for example,
\cite{MR1335452,MR4369236,MR2154153}. We include the argument for
completeness.

Let $A=W^{*}K_{\infty}W$ be the block form from \prettyref{cor:6-4}.
Thus 
\[
A=\begin{pmatrix}pK_{\infty}+pP_{\phi} & \sqrt{p\left(1-p\right)}\,P_{\phi}\\[1.2ex]
\sqrt{p\left(1-p\right)}\,P_{\phi} & \left(1-p\right)K_{\infty}+\left(1-p\right)P_{\phi}
\end{pmatrix}.
\]
Write 
\[
D=\begin{pmatrix}pK_{\infty} & 0\\
0 & \left(1-p\right)K_{\infty}
\end{pmatrix},\qquad b=\left(\sqrt{p}\,\phi,\sqrt{1-p}\,\phi\right).
\]
Then 
\[
A=D+P_{b}.
\]

Since $K_{\infty}$ is compact, positive, and self-adjoint by \prettyref{prop:5-1},
its spectrum is contained in $\left[0,L\right]$. Therefore 
\[
\sigma\left(D\right)=p\,\sigma\left(K_{\infty}\right)\cup\left(1-p\right)\sigma\left(K_{\infty}\right)\subset\left[0,\alpha L\right].
\]
Hence, if $\lambda>\alpha L$, then $\lambda\notin\sigma\left(D\right)$,
so $\lambda I-D$ is invertible.

We first prove that $(1)$ implies $(2)$. Since $A$ is unitarily
equivalent to $K_{\infty}$, the number $\lambda$ is an eigenvalue
of $K_{\infty}$ if and only if it is an eigenvalue of $A$. So assume
\[
Ax=\lambda x
\]
for some nonzero vector $x\in L^{2}\left(\mu_{p}\right)\oplus L^{2}\left(\mu_{p}\right)$.
Then 
\[
\left(\lambda I-D\right)x=P_{b}x=\left\langle b,x\right\rangle b.
\]
We claim that $\left\langle b,x\right\rangle \neq0$. Indeed, if $\left\langle b,x\right\rangle =0$,
then $\left(\lambda I-D\right)x=0$, and since $\lambda I-D$ is invertible,
this would force $x=0$, a contradiction. Thus 
\[
x=\left\langle b,x\right\rangle \left(\lambda I-D\right)^{-1}b.
\]
Taking the inner product with $b$ and cancelling the nonzero scalar
$\left\langle b,x\right\rangle $, we obtain 
\[
1=\left\langle b,\left(\lambda I-D\right)^{-1}b\right\rangle .
\]
Now 
\[
\left(\lambda I-D\right)^{-1}=\begin{pmatrix}\left(\lambda I-pK_{\infty}\right)^{-1} & 0\\
0 & \left(\lambda I-\left(1-p\right)K_{\infty}\right)^{-1}
\end{pmatrix},
\]
so 
\[
\left\langle b,\left(\lambda I-D\right)^{-1}b\right\rangle =p\left\langle \phi,\left(\lambda I-pK_{\infty}\right)^{-1}\phi\right\rangle +\left(1-p\right)\left\langle \phi,\left(\lambda I-\left(1-p\right)K_{\infty}\right)^{-1}\phi\right\rangle .
\]
Using 
\begin{align*}
\left(\lambda I-pK_{\infty}\right)^{-1} & =\frac{1}{p}\left(\frac{\lambda}{p}I-K_{\infty}\right)^{-1},\\
\left(\lambda I-\left(1-p\right)K_{\infty}\right)^{-1} & =\frac{1}{1-p}\left(\frac{\lambda}{1-p}I-K_{\infty}\right)^{-1},
\end{align*}
we get 
\[
\left\langle b,\left(\lambda I-D\right)^{-1}b\right\rangle =m\left(\lambda/p\right)+m\left(\lambda/\left(1-p\right)\right).
\]
Therefore 
\[
m\left(\lambda/p\right)+m\left(\lambda/\left(1-p\right)\right)=1.
\]

We now prove that $(2)$ implies $(1)$. Assume 
\[
m\left(\lambda/p\right)+m\left(\lambda/\left(1-p\right)\right)=1.
\]
By the computation above, this is equivalent to 
\[
\left\langle b,\left(\lambda I-D\right)^{-1}b\right\rangle =1.
\]
Set 
\[
x=\left(\lambda I-D\right)^{-1}b.
\]
Then $x\neq0$, since $b\neq0$ and $\lambda I-D$ is invertible.
Moreover, 
\[
\left(\lambda I-A\right)x=\left(\lambda I-D-P_{b}\right)x=b-\left\langle b,x\right\rangle b.
\]
Because 
\[
\left\langle b,x\right\rangle =\left\langle b,\left(\lambda I-D\right)^{-1}b\right\rangle =1,
\]
it follows that 
\[
\left(\lambda I-A\right)x=0.
\]
Hence $\lambda$ is an eigenvalue of $A$, and therefore of $K_{\infty}$.

We next prove simplicity. Let $\lambda>\alpha L$ be an eigenvalue.
If 
\[
Ay=\lambda y,
\]
then the argument above gives 
\[
y=\left\langle b,y\right\rangle \left(\lambda I-D\right)^{-1}b.
\]
Thus every eigenvector for $\lambda$ is a scalar multiple of $\left(\lambda I-D\right)^{-1}b$.
So the eigenspace is one-dimensional, and $\lambda$ is simple.

Since $0<p<1$, one has $\alpha<1$. Also $L>0$, since $K_{\infty}\neq0$.
Hence $L>\alpha L$, so the equivalence already proved applies at
$\lambda=L$ and gives 
\[
m\left(L/p\right)+m\left(L/\left(1-p\right)\right)=1.
\]

If $\lambda>\alpha L$ is any other real solution, then by the equivalence
proved above, $\lambda$ is an eigenvalue of $K_{\infty}$. Since
$L=\lambda_{\max}\left(K_{\infty}\right)$ is the largest eigenvalue
of $K_{\infty}$, it follows that $\lambda\le L$. Therefore $L$
is the largest real number $\lambda>\alpha L$ satisfying 
\[
m\left(\lambda/p\right)+m\left(\lambda/\left(1-p\right)\right)=1.
\]
Its simplicity follows from the simplicity statement already proved. 
\end{proof}

\begin{rem}
Several parts of the construction do not depend on the specific geometry.
The filtration formula $K_{m}=\sum^{m}_{n=0}D_{n}E_{n}$ is a measure-theoretic
identity attached to the cylinder partition filtration, and it extends
to any probability space equipped with a finite refining partition
tree.

Likewise, the self-similar fixed-point identity for $K_{\infty}$
extends to any self-similar measure arising from a finite family of
injective contractions whose first-level pieces are disjoint modulo
the measure. In that setting the branch maps define isometries with
orthogonal ranges, and the same fixed-point argument applies. We restrict
attention here to the Bernoulli Cantor case in order to keep the weighted
and symmetric structures explicit. 
\end{rem}

\bibliographystyle{amsalpha}
\bibliography{ref}

@article{MR3990189,
	author = {Anoussis, M. and Katavolos, A. and Todorov, I. G.},
	doi = {10.4064/sm180313-6-9},
	fjournal = {Studia Mathematica},
	issn = {0039-3223,1730-6337},
	journal = {Studia Math.},
	mrclass = {43A20 (22D15 22D25 43A77 47L05)},
	mrnumber = {3990189},
	mrreviewer = {Jos\'e\ Extremera},
	number = {2},
	pages = {193--213},
	title = {Bimodules over {${\rm VN}(G)$}, harmonic operators and the non-commutative {P}oisson boundary},
	url = {https://doi.org/10.4064/sm180313-6-9},
	volume = {249},
	year = {2019}}

@article{MR4126800,
	author = {Hiai, Fumio},
	doi = {10.1007/s43036-019-00034-9},
	fjournal = {Advances in Operator Theory},
	issn = {2662-2009,2538-225X},
	journal = {Adv. Oper. Theory},
	mrclass = {47A64 (47B65 47L07 58B20)},
	mrnumber = {4126800},
	number = {3},
	pages = {680--713},
	title = {Operator means deformed by a fixed point method},
	url = {https://doi.org/10.1007/s43036-019-00034-9},
	volume = {5},
	year = {2020}}

@article{MR4478259,
	author = {Gabe, James},
	doi = {10.4171/jncg/479},
	fjournal = {Journal of Noncommutative Geometry},
	issn = {1661-6952,1661-6960},
	journal = {J. Noncommut. Geom.},
	mrclass = {46L05 (19K35 46L07 46L35 46L80)},
	mrnumber = {4478259},
	mrreviewer = {Anil\ Kumar\ Karn},
	number = {2},
	pages = {391--421},
	title = {Lifting theorems for completely positive maps},
	url = {https://doi.org/10.4171/jncg/479},
	volume = {16},
	year = {2022}}

@article{MR4718688,
	author = {Li, Yuan and Gao, Shuhui and Zhao, Cong and Ma, Nan},
	doi = {10.1007/s11117-024-01037-4},
	fjournal = {Positivity. An International Mathematics Journal Devoted to Theory and Applications of Positivity},
	issn = {1385-1292,1572-9281},
	journal = {Positivity},
	mrclass = {47L05 (47A05 47B10)},
	mrnumber = {4718688},
	mrreviewer = {Janko\ Bra\v ci\v c},
	number = {2},
	pages = {Paper No. 22, 17},
	title = {On spectra of some completely positive maps},
	url = {https://doi.org/10.1007/s11117-024-01037-4},
	volume = {28},
	year = {2024}}

@article{MR2572704,
	author = {Weihua, Liu and Junde, Wu},
	doi = {10.1063/1.3253574},
	fjournal = {Journal of Mathematical Physics},
	issn = {0022-2488,1089-7658},
	journal = {J. Math. Phys.},
	mrclass = {47H10 (81R15)},
	mrnumber = {2572704},
	number = {10},
	pages = {103531, 2},
	title = {On fixed points of {L}\"uders operation},
	url = {https://doi.org/10.1063/1.3253574},
	volume = {50},
	year = {2009}}

@article{MR3059439,
	author = {Szehr, Oleg and Wolf, Michael M.},
	doi = {10.1063/1.4795112},
	fjournal = {Journal of Mathematical Physics},
	issn = {0022-2488,1089-7658},
	journal = {J. Math. Phys.},
	mrclass = {81S25 (60J20)},
	mrnumber = {3059439},
	number = {3},
	pages = {032203, 10},
	title = {Perturbation bounds for quantum {M}arkov processes and their fixed points},
	url = {https://doi.org/10.1063/1.4795112},
	volume = {54},
	year = {2013}}

@article{MR4670347,
	author = {Mograby, Gamal and Balu, Radhakrishnan and Okoudjou, Kasso A. and Teplyaev, Alexander},
	doi = {10.4171/jst/473},
	fjournal = {Journal of Spectral Theory},
	issn = {1664-039X,1664-0403},
	journal = {J. Spectr. Theory},
	mrclass = {47B36 (05E05 28A80 47A10 81Q35 81T17)},
	mrnumber = {4670347},
	mrreviewer = {Netanel\ Levi},
	number = {3},
	pages = {903--935},
	title = {Spectral decimation of piecewise centrosymmetric {J}acobi operators on graphs},
	url = {https://doi.org/10.4171/jst/473},
	volume = {13},
	year = {2023}}

@incollection{MR2441420,
	author = {Kumagai, Takashi},
	booktitle = {Selected papers on analysis and related topics},
	doi = {10.1090/trans2/223/06},
	isbn = {978-0-8218-3928-7},
	mrclass = {28A80 (31C25 35K05 46E35 60J75)},
	mrnumber = {2441420},
	pages = {81--95},
	publisher = {Amer. Math. Soc., Providence, RI},
	series = {Amer. Math. Soc. Transl. Ser. 2},
	title = {Recent developments of analysis on fractals [MR2105988]},
	url = {https://doi.org/10.1090/trans2/223/06},
	volume = {223},
	year = {2008}}

@article{MR2254554,
	author = {Hambly, B. M. and Metz, V. and Teplyaev, A.},
	doi = {10.1112/S002461070602312X},
	fjournal = {Journal of the London Mathematical Society. Second Series},
	issn = {0024-6107,1469-7750},
	journal = {J. London Math. Soc. (2)},
	mrclass = {31C25 (28A80 47H07 60J65)},
	mrnumber = {2254554},
	mrreviewer = {Uta\ R.\ Freiberg},
	number = {1},
	pages = {93--112},
	title = {Self-similar energies on post-critically finite self-similar fractals},
	url = {https://doi.org/10.1112/S002461070602312X},
	volume = {74},
	year = {2006}}

@incollection{MR1410793,
	author = {Kigami, Jun},
	fjournal = {S\=urikaisekikenky\=usho K\B oky\=uroku},
	journal = {S\=urikaisekikenky\=usho K\B oky\=uroku},
	mrclass = {28A80 (31C25 60J15 92C15)},
	mrnumber = {1410793},
	note = {Low-dimensional dynamical systems and related topics (Japanese) (Kyoto, 1995)},
	number = {938},
	pages = {133--148},
	title = {Laplacians on self-similar sets and their spectral distributions},
	year = {1996}}

@book{MR1840042,
	author = {Kigami, Jun},
	doi = {10.1017/CBO9780511470943},
	isbn = {0-521-79321-1},
	mrclass = {28A80 (31C20 31C25 35J05 35K05)},
	mrnumber = {1840042},
	mrreviewer = {Volker\ Metz},
	pages = {viii+226},
	publisher = {Cambridge University Press, Cambridge},
	series = {Cambridge Tracts in Mathematics},
	title = {Analysis on fractals},
	url = {https://doi.org/10.1017/CBO9780511470943},
	volume = {143},
	year = {2001}}

@article{MR1998572,
	author = {Kribs, David W.},
	doi = {10.1017/S0013091501000980},
	fjournal = {Proceedings of the Edinburgh Mathematical Society. Series II},
	issn = {0013-0915,1464-3839},
	journal = {Proc. Edinb. Math. Soc. (2)},
	mrclass = {42C40 (46L60 47A20 81P68 94A12)},
	mrnumber = {1998572},
	number = {2},
	pages = {421--433},
	title = {Quantum channels, wavelets, dilations and representations of {$O_n$}},
	url = {https://doi.org/10.1017/S0013091501000980},
	volume = {46},
	year = {2003}}

@article{MR3343663,
	author = {Farkov, Yu. A.},
	doi = {10.1134/S000143461411039X},
	fjournal = {Mathematical Notes},
	issn = {0001-4346,1573-8876},
	journal = {Math. Notes},
	mrclass = {43A60 (42C40)},
	mrnumber = {3343663},
	mrreviewer = {Victor\ Alexandrovich\ Makarichev},
	note = {Translation of Mat. Zametki {\bf 96} (2014), no. 6, 926--938},
	number = {5-6},
	pages = {996--1007},
	title = {Wavelet expansions on the {C}antor group},
	url = {https://doi.org/10.1134/S000143461411039X},
	volume = {96},
	year = {2014}}

@article{MR1785282,
	author = {Strichartz, Robert S.},
	doi = {10.1007/BF02788990},
	fjournal = {Journal d'Analyse Math\'ematique},
	issn = {0021-7670,1565-8538},
	journal = {J. Anal. Math.},
	mrclass = {42A38 (28A75 42C05)},
	mrnumber = {1785282},
	mrreviewer = {Steen\ Pedersen},
	pages = {209--238},
	title = {Mock {F}ourier series and transforms associated with certain {C}antor measures},
	url = {https://doi.org/10.1007/BF02788990},
	volume = {81},
	year = {2000}}

@article{MR1648441,
	author = {Jorgensen, Palle E. T. and Pedersen, Steen},
	doi = {10.1016/S0764-4442(97)82984-9},
	fjournal = {Comptes Rendus de l'Acad\'emie des Sciences. S\'erie I. Math\'ematique},
	issn = {0764-4442},
	journal = {C. R. Acad. Sci. Paris S\'er. I Math.},
	mrclass = {28A80 (42B10 46E30)},
	mrnumber = {1648441},
	mrreviewer = {Paul\ Jolissaint},
	number = {3},
	pages = {301--306},
	title = {Orthogonal harmonic analysis and scaling of fractal measures},
	url = {https://doi.org/10.1016/S0764-4442(97)82984-9},
	volume = {326},
	year = {1998}}

@article{MR4369236,
	author = {Behrndt, Jussi and ter Elst, A. F. M. and Gesztesy, Fritz},
	doi = {10.1090/tran/8401},
	fjournal = {Transactions of the American Mathematical Society},
	issn = {0002-9947,1088-6850},
	journal = {Trans. Amer. Math. Soc.},
	mrclass = {47A53 (47A55)},
	mrnumber = {4369236},
	mrreviewer = {Hans-Olav\ Tylli},
	number = {2},
	pages = {799--845},
	title = {The generalized {B}irman-{S}chwinger principle},
	url = {https://doi.org/10.1090/tran/8401},
	volume = {375},
	year = {2022}}

@book{MR1335452,
	author = {Kato, Tosio},
	isbn = {3-540-58661-X},
	mrclass = {47A55 (46-00 47-00)},
	mrnumber = {1335452},
	note = {Reprint of the 1980 edition},
	pages = {xxii+619},
	publisher = {Springer-Verlag, Berlin},
	series = {Classics in Mathematics},
	title = {Perturbation theory for linear operators},
	year = {1995}}

@article{MR4550310,
	author = {Li, Yuan and Li, Fan and Chen, Shan and Chen, Yanni},
	doi = {10.1016/S0034-4877(23)00014-9},
	fjournal = {Reports on Mathematical Physics},
	issn = {0034-4877,1879-0674},
	journal = {Rep. Math. Phys.},
	mrclass = {47B65 (47B10 47C15 47N50 81P47)},
	mrnumber = {4550310},
	mrreviewer = {Harsh\ Trivedi},
	number = {1},
	pages = {117--129},
	title = {Approximation states and fixed points of quantum channels},
	url = {https://doi.org/10.1016/S0034-4877(23)00014-9},
	volume = {91},
	year = {2023}}

@article{MR3103223,
	author = {Dutkay, Dorin Ervin and Picioroaga, Gabriel and Song, Myung-Sin},
	doi = {10.1016/j.jmaa.2013.07.012},
	fjournal = {Journal of Mathematical Analysis and Applications},
	issn = {0022-247X,1096-0813},
	journal = {J. Math. Anal. Appl.},
	mrclass = {42C10 (46L05)},
	mrnumber = {3103223},
	mrreviewer = {J\'er\^ome\ Gilles},
	number = {2},
	pages = {1128--1139},
	title = {Orthonormal bases generated by {C}untz algebras},
	url = {https://doi.org/10.1016/j.jmaa.2013.07.012},
	volume = {409},
	year = {2014}}

@article{MR2839059,
	author = {Li, Yuan},
	doi = {10.1063/1.3583541},
	fjournal = {Journal of Mathematical Physics},
	issn = {0022-2488,1089-7658},
	journal = {J. Math. Phys.},
	mrclass = {81R15 (47H10 81P15)},
	mrnumber = {2839059},
	mrreviewer = {Pavel\ V.\ Exner},
	number = {5},
	pages = {052103, 9},
	title = {Characterizations of fixed points of quantum operations},
	url = {https://doi.org/10.1063/1.3583541},
	volume = {52},
	year = {2011}}

@article{MR2805505,
	author = {Li, Yuan},
	doi = {10.1016/j.jmaa.2011.04.047},
	fjournal = {Journal of Mathematical Analysis and Applications},
	issn = {0022-247X,1096-0813},
	journal = {J. Math. Anal. Appl.},
	mrclass = {47N50 (47B15 47B65 81R15 81S22)},
	mrnumber = {2805505},
	mrreviewer = {Julio\ Guerrero},
	number = {1},
	pages = {172--179},
	title = {Fixed points of dual quantum operations},
	url = {https://doi.org/10.1016/j.jmaa.2011.04.047},
	volume = {382},
	year = {2011}}

@book{MR1976867,
	author = {Paulsen, Vern},
	isbn = {0-521-81669-6},
	mrclass = {46L07 (47A20 47L30)},
	mrnumber = {1976867},
	mrreviewer = {Christian\ Le Merdy},
	pages = {xii+300},
	publisher = {Cambridge University Press, Cambridge},
	series = {Cambridge Studies in Advanced Mathematics},
	title = {Completely bounded maps and operator algebras},
	volume = {78},
	year = {2002}}

@article{MR1743534,
	author = {Bratteli, Ola and Evans, David E. and Jorgensen, Palle E. T.},
	doi = {10.1006/acha.2000.0283},
	fjournal = {Applied and Computational Harmonic Analysis. Time-Frequency and Time-Scale Analysis, Wavelets, Numerical Algorithms, and Applications},
	issn = {1063-5203,1096-603X},
	journal = {Appl. Comput. Harmon. Anal.},
	mrclass = {46L52 (42C40 46L05)},
	mrnumber = {1743534},
	number = {2},
	pages = {166--196},
	title = {Compactly supported wavelets and representations of the {C}untz relations},
	url = {https://doi.org/10.1006/acha.2000.0283},
	volume = {8},
	year = {2000}}

@article{MR1469149,
	author = {Bratteli, Ola and Jorgensen, Palle E. T.},
	doi = {10.1090/memo/0663},
	fjournal = {Memoirs of the American Mathematical Society},
	issn = {0065-9266,1947-6221},
	journal = {Mem. Amer. Math. Soc.},
	mrclass = {46Lxx (11B85 22D25 39B12 42C15 47D25)},
	mrnumber = {1469149},
	mrreviewer = {Paul\ Jolissaint},
	number = {663},
	pages = {x+89},
	title = {Iterated function systems and permutation representations of the {C}untz algebra},
	url = {https://doi.org/10.1090/memo/0663},
	volume = {139},
	year = {1999}}

@article{MR467330,
	author = {Cuntz, Joachim},
	fjournal = {Communications in Mathematical Physics},
	issn = {0010-3616,1432-0916},
	journal = {Comm. Math. Phys.},
	mrclass = {46L05},
	mrnumber = {467330},
	mrreviewer = {E.\ St\o rmer},
	number = {2},
	pages = {173--185},
	title = {Simple {$C\sp*$}-algebras generated by isometries},
	url = {http://projecteuclid.org/euclid.cmp/1103901288},
	volume = {57},
	year = {1977}}

@book{MR1228209,
	author = {Meyer, Yves},
	isbn = {0-521-42000-8; 0-521-45869-2},
	mrclass = {42-02 (42C15 46E30 47G99)},
	mrnumber = {1228209},
	note = {Translated from the 1990 French original by D. H. Salinger},
	pages = {xvi+224},
	publisher = {Cambridge University Press, Cambridge},
	series = {Cambridge Studies in Advanced Mathematics},
	title = {Wavelets and operators},
	volume = {37},
	year = {1992}}

@book{MR2154153,
	author = {Simon, Barry},
	doi = {10.1090/surv/120},
	edition = {Second},
	isbn = {0-8218-3581-5},
	mrclass = {47L20 (47A40 47A55 47B10 47B36 47E05 81Q15 81U99)},
	mrnumber = {2154153},
	mrreviewer = {Pavel\ B.\ Kurasov},
	pages = {viii+150},
	publisher = {American Mathematical Society, Providence, RI},
	series = {Mathematical Surveys and Monographs},
	title = {Trace ideals and their applications},
	url = {https://doi.org/10.1090/surv/120},
	volume = {120},
	year = {2005}}

@incollection{MR864712,
	author = {Burkholder, Donald L.},
	booktitle = {Probability and analysis ({V}arenna, 1985)},
	doi = {10.1007/BFb0076300},
	isbn = {3-540-16787-0},
	mrclass = {42A45 (42A61 46E40 60G42 60G46)},
	mrnumber = {864712},
	mrreviewer = {Terry\ R.\ McConnell},
	pages = {61--108},
	publisher = {Springer, Berlin},
	series = {Lecture Notes in Math.},
	title = {Martingales and {F}ourier analysis in {B}anach spaces},
	url = {https://doi.org/10.1007/BFb0076300},
	volume = {1206},
	year = {1986}}

@book{MR618663,
	author = {Edwards, R. E. and Gaudry, G. I.},
	isbn = {3-540-07726-X},
	mrclass = {42A18 (43A22)},
	mrnumber = {618663},
	pages = {ix+212},
	publisher = {Springer-Verlag, Berlin-New York},
	series = {Ergebnisse der Mathematik und ihrer Grenzgebiete [Results in Mathematics and Related Areas]},
	title = {Littlewood-{P}aley and multiplier theory},
	volume = {Band 90},
	year = {1977}}

@book{MR402915,
	author = {Neveu, J.},
	edition = {Revised},
	mrclass = {60G45 (60G40)},
	mrnumber = {402915},
	note = {Translated from the French by T. P. Speed},
	pages = {viii+236},
	publisher = {North-Holland Publishing Co., Amsterdam-Oxford; American Elsevier Publishing Co., Inc., New York},
	series = {North-Holland Mathematical Library},
	title = {Discrete-parameter martingales},
	volume = {Vol. 10},
	year = {1975}}

@book{MR833073,
	author = {Kahane, Jean-Pierre},
	edition = {Second},
	isbn = {0-521-24966-X; 0-521-45602-9},
	mrclass = {60G99 (42A61 60G15 60G17)},
	mrnumber = {833073},
	mrreviewer = {Michael\ Marcus},
	pages = {xiv+305},
	publisher = {Cambridge University Press, Cambridge},
	series = {Cambridge Studies in Advanced Mathematics},
	title = {Some random series of functions},
	volume = {5},
	year = {1985}}

\end{document}